\newcommand{\RN}{\mathbb{R}} 
\newcommand{\CN}{\mathbb{C}} 
\newcommand{\ZN}{\mathbb{Z}} 
\newcommand{\eps}{\ensuremath\varepsilon}
\newcommand{\suml}{\sum\limits}
\newcommand{\ovl}{\overline}
\newcommand{\gr}{\mathop{\mathrm{gr}}\nolimits}
\renewcommand{\Im}{\mathop{\mathrm{Im}}\nolimits}
\newcommand{\SU}{\mathop{\mathrm{SU}}\nolimits}
\newcommand{\ad}{\ensuremath\operatorname{ad}}
\newcommand{\mc}[1]{\mathcal{#1}}
\renewcommand{\Re}{\operatorname{Re}}
\newcommand{\C}{\mathbb{C}}
\newcommand{\g}{\mathfrak{g}}
\newcommand{\bP}{\mathbf{P}}
\newtheorem{Theorem}{Theorem}[section]
\newtheorem{Lemma}[Theorem]{Lemma}
\newtheorem{Proposition}[Theorem]{Proposition}
\newtheorem{Corollary}[Theorem]{Corollary}
\theoremstyle{definition}
\newtheorem{Example}[Theorem]{Example}
\newtheorem{Remark}[Theorem]{Remark}
\numberwithin{equation}{section}
\begin{document}

\newcommand{\arXivNumber}{2009.09437}

\renewcommand{\thefootnote}{}

\renewcommand{\PaperNumber}{029}

\FirstPageHeading

\ShortArticleName{Twisted Traces and Positive Forms on Quantized Kleinian Singularities of Type A}

\ArticleName{Twisted Traces and Positive Forms\\ on Quantized Kleinian Singularities of Type A\footnote{This paper is a~contribution to the Special Issue on Representation Theory and Integrable Systems in honor of Vitaly Tarasov on the 60th birthday and Alexander Varchenko on the 70th birthday. The full collection is available at \href{https://www.emis.de/journals/SIGMA/Tarasov-Varchenko.html}{https://www.emis.de/journals/SIGMA/Tarasov-Varchenko.html}}}

\Author{Pavel ETINGOF~$^{\rm a}$, Daniil KLYUEV~$^{\rm a}$, Eric RAINS~$^{\rm b}$ and Douglas STRYKER~$^{\rm a}$}

\AuthorNameForHeading{P.~Etingof, D.~Klyuev, E.~Rains and D.~Stryker}

\Address{$^{\rm a)}$~Department of Mathematics, Massachusetts Institute of Technology, USA}
\EmailD{\href{mailto:etingof@math.mit.edu}{etingof@math.mit.edu}, \href{mailto:klyuev@mit.edu}{klyuev@mit.edu}, \href{mailto:stryker@mit.edu}{stryker@mit.edu}}

\Address{$^{\rm b)}$~Department of Mathematics, California Institute of Technology, Pasadena, CA 91125, USA}
\EmailD{\href{mailto:rains@mit.edu}{rains@mit.edu}}

\ArticleDates{Received September 22, 2020, in final form March 08, 2021; Published online March 25, 2021}

\Abstract{Following~[Beem C., Peelaers W., Rastelli L., \textit{Comm. Math. Phys.} \textbf{354} (2017), 345--392] and [Etingof P., Stryker D., \textit{SIGMA} \textbf{16} (2020), 014, 28~pages], we undertake a~detailed study of twisted traces on quantizations of Kleinian singularities of type $A_{n-1}$. In particular, we give explicit integral formulas for these traces and use them to determine when a trace defines a positive Hermitian form on the corresponding algebra. This leads to a classification of unitary short star-products for~such quantizations, a problem posed by Beem, Peelaers and Rastelli in connection with 3-dimensional superconformal field theory. In particular, we confirm their conjecture that for $n\le 4$ a unitary short star-product is unique and compute its parameter as a function of the quantization parameters, giving exact formulas for the numerical functions by Beem, Peelaers and Rastelli. If $n=2$, this, in~particular, recovers the theory of unitary spherical Harish-Chandra bimodules for ${\mathfrak{sl}}_2$. Thus the results of this paper may be viewed as a starting point for a generalization of the theory of~unitary Harish-Chandra bimodules over enveloping algebras of reductive Lie algebras [Vogan~Jr.~D.A., \textit{Annals of Mathematics Studies}, Vol.~118, Princeton University Press, Princeton, NJ, 1987] to~more general quantum algebras. Finally, we derive recurrences to compute the coefficients of short star-products corresponding to twisted traces, which are generalizations of discrete Painlev\'e systems.}

\Keywords{star-product; orthogonal polynomial; quantization; trace}

\Classification{16W70; 33C47}

\begin{flushright}
\begin{minipage}{75mm}
\it To Vitaly Tarasov and Alexander Varchenko\\
 with admiration
\end{minipage}
\end{flushright}

\renewcommand{\thefootnote}{\arabic{footnote}}
\setcounter{footnote}{0}


\section{Introduction}

The notion of a {\it short star-product} for a filtered quantization $\mathcal A$ of a hyperK\"ahler cone was introduced by Beem, Peelaers and Rastelli in~\cite{BPR} motivated by the needs of 3-dimensional super\-conformal field theory (under the name ``star-product satisfying the truncation condition"); this is an algebraic incarnation of non-holomorphic $\SU(2)$-symmetry of such cones. Roughly speaking, these are star-products which have fewer terms than expected (in fact, as few as possible). The most important short star-products are {\it nondegenerate} ones, i.e., those for which the constant term ${\rm CT}(a*b)$ of $a*b$ defines a nondegenerate pairing on $A=\operatorname{gr} \mathcal A$. Moreover, physically the most interesting ones among them are those for which an appropriate Hermitian version of this pairing is positive definite; such star-products are called {\it unitary}. Namely, short star-products arising in 3-dimensional SCFT happen to be unitary, which is a motivation to take a closer look at them.

In fact, in order to compute the parameters of short star-products arising from 3-dimensional SCFT, in~\cite{BPR} the authors attempted to classify unitary short star-products for even quantizations of Kleinian singularities of type $A_{n-1}$ for $n\le 4$. Their low-degree computations suggested that in these cases a unitary short star-product should be unique for each quantization. While the~$A_1$ case is easy (as it reduces to the representation theory of ${\rm SL}_2$),
in the $A_2$ case the situation is already quite interesting. Namely, in this case an even quantization depends on one parameter~$\kappa$, and
Beem, Peelaers and Rastelli showed that (at least in low degrees) short star-products for such a quantization are parametrized by another parameter $\alpha$. Moreover, they computed numerically the function $\alpha(\kappa)$ expressing the parameter of the unique {\it unitary} short star-product on the parameter of quantization \cite[Fig.~2]{BPR}, but a formula for this function (even conjectural) remained unknown.

These results were improved upon by Dedushenko, Pufu and Yacoby in~\cite{DPY}, who computed the short star-products coming from 3-dimensional SCFT in a different way. This made the need to understand all nondegenerate short star-products and in particular unitary ones less pressing for physics, but it remained a very interesting mathematical problem.

Motivated by~\cite{BPR}, the first and the last author studied this problem in~\cite{ES}. There they developed a mathematical theory of nondegenerate short star-products and obtained their classification. As a result, they confirmed the conjecture of~\cite{BPR} that such star-products exist for a~wide class of hyperK\"ahler cones and are parametrized by finitely many parameters. The main tool in this paper is the observation, due to Kontsevich, that nondegenerate short star-products correspond to nondegenerate twisted traces on the quantized algebra $\mathcal A$, up to scaling. The reason this idea is effective is that traces are much more familiar objects (representing classes in the zeroth Hochschild homology of $\mathcal{A}$), and can be treated by standard techniques of representation theory and noncommutative geometry. However, the specific example of type $A_{n-1}$ Kleinian singularities and in particular the classification of unitary short star-products was not addressed in detail in~\cite{ES}.

\looseness=1
The goal of the present paper is to apply the results of~\cite{ES} to this example, improving on the results of~\cite{BPR}. Namely, we give an explicit classification of nondegenerate short star-products for type $A_{n-1}$ Kleinian singularities, expressing the corresponding traces of weight $0$ elements (i.e.,~polynomials $P(z)$ in the weight zero generator $z$) as integrals $\int_{{\rm i}\mathbb R} P(x)w(x)|{\rm d}x|$ of $P(x)$ against a certain weight function $w(x)$. As a result, the corresponding quantization map sends monomials $z^k$ to $p_k(z)$, where $p_k(x)$ are monic orthogonal polynomials with weight $w(x)$ which belong to the class of {\it semiclassical orthogonal polynomials}. If $n=1$, or $n=2$ with special parameters, they reduce to classical hypergeometric orthogonal polynomials, but in general they do not. We~also determine which of these short star-products are unitary, confirming the conjecture of~\cite{BPR} that for even quantizations of $A_{n-1}$, $n\le 4$ a unitary star product is unique. Moreover, we find the exact formula for the function $\alpha(\kappa)$ whose graph is given in Fig.~2 of~\cite{BPR}:
\[
\alpha(\kappa)=\frac{1}{4}-\frac{\kappa+\frac{1}{4}}{1-\cos\big(\pi\sqrt{\kappa+\frac{1}{4}}\big)}.
\]
In particular, this recovers the value $\alpha\big({-}\frac{1}{4}\big)=\frac{1}{4}-\frac{2}{\pi^2}$ predicted in~\cite{BPR} and confirmed \mbox{in~\cite{DFPY,DPY}}.

It would be very interesting to develop a similar theory of positive traces for higher-dimen\-sio\-nal quantizations, based on the algebraic results of~\cite{ES}. It would also be interesting to extend this analysis from the algebra $\mc A$ to bimodules over $\mc A$ (e.g., Harish-Chandra bimodules, cf.~\cite{L}). Finally, it would be interesting to develop a $q$-analogue of this theory.
These topics are beyond the scope of this paper, however, and are subject of future research. For~instance, the~$q$-analogue of our results for Kleinian singularities of type A will be worked out by the second author in~a~forthcoming paper~\cite{K2}.

\begin{Remark} We show in Example~\ref{neq2} that for $n=2$ the theory of positive traces developed here recovers the classification of irreducible unitary spherical representations of ${\rm SL}_2(\mathbb C)$~\cite{V}. Moreover,
this can be extended to the non-spherical case if we consider
traces on Harish-Chandra bimodules over quantizations (with different parameters on the left and the right, in general) rather than just quantizations themselves. One could expect that a similar theory for higher-dimensional quantizations, in the special case of quotients of $U(\g)$ by a central character (i.e., quantizations of the nilpotent cone) would recover the theory of unitary representations of the complex reductive group $G$ with Lie algebra $\g$. This suggests that the theory of positive traces on filtered quantizations of hyperK\"ahler cones may be viewed as a generalization of the theory of unitary Harish-Chandra bimodules for simple Lie algebras. A peculiar but essential new feature of this generalization (which may scare away classical representation theorists), is that a given simple bimodule may have more than one Hermitian (and even more than one unitary) structure up to scaling (namely, unitary structures form a cone, often of dimension $>1$), and that a bimodule which admits a unitary structure need not be semisimple.
\end{Remark}

\begin{Remark}
The second author studied the existence of unitary star-products for type $A_{n-1}$ Kleinian singularities in~\cite{K1} and obtained a partial classification of quantizations that admit a~unitary star-product. That paper also contains examples of non-semisimple unitarizable bimo\-dules. The present paper has stronger results: it contains a complete description of the set of~unitary star-products for any type $A_{n-1}$ Kleinian singularity.
\end{Remark}

The organization of the paper is as follows. Section~\ref{sec2} is dedicated to outlining the algebraic theory of filtered quantizations and twisted traces for Kleinian singularities of type A, following~\cite{ES}. In Section~\ref{sec3} we introduce our main analytic tools, representing twisted traces by contour integrals against a weight function. In this section we also use this weight function to study the orthogonal polynomials arising from twisted traces. In~Section~\ref{sec4}, using the analytic approach of Section~\ref{sec3}, we determine which twisted traces are positive. In~particular, we confirm the conjecture of~\cite{BPR} that a positive trace is unique up to scaling for $n\le 4$ (for the choice of conjugation as in~\cite{BPR}), and find the exact dependence of the parameter of the positive trace on the quantization parameters for $n=3$ and $n=4$, which was computed numerically in~\cite{BPR}.\footnote{It is curious that, unlike classical representation theory, this dependence is given by a transcendental function.}
Finally, in Section~\ref{expcom} we discuss the problem of explicit computation of the coefficients $a_k$, $b_k$
of the 3-term recurrence for the orthogonal polynomials arising from twisted traces, which
appear as coefficients of the corresponding short star-product. Since these orthogonal polynomials are semiclassical, these coefficients can be computed using non-linear recurrences which are generalizations of discrete Painlev\'e systems.

\section{Filtered quantizations and twisted traces}\label{sec2}

\subsection{Filtered quantizations}
Let $X_n$ be the Kleinian singularity of type $A_{n-1}$.
Recall that
\[
A:=\mathbb C[X_n]=\CN[p,q]^{\mathbb Z/n},
\]
where
$\mathbb Z/n$ acts by $p\mapsto {\rm e}^{2\pi {\rm i}/n}p,\ q\mapsto {\rm e}^{-2\pi {\rm i}/n}q$.
Thus
\[
A=\mathbb C[u,v,z]/(uv-z^n),
\]
where
\[
u=p^n,\qquad
v=q^n,\qquad
z=pq.
\]
This algebra has a grading defined by the formulas $\deg(p)=\deg(q)=1$, thus
\begin{equation}\label{gra}
 \deg(u)=\deg(v)=n,\qquad
 \deg(z)=2.
 \end{equation}
The Poisson bracket is given by $\lbrace{p,q\rbrace}=\frac{1}{n}$
and on $A$ takes the form
\[
 \lbrace z,u\rbrace =-u,\qquad
 \lbrace z,v\rbrace=v,\qquad
 \lbrace u,v\rbrace=nz^{n-1}.
\]
Also recall that filtered quantizations $\mathcal A$ of $A$ are {\it generalized Weyl algebras}~\cite{B} which look as follows. Let~$P\in \mathbb C[x]$ be a monic polynomial of degree $n$. Then $\mc{A}=\mc A_P$ is the algebra generated by $u$, $v$, $z$ with defining relations
\[
[z,u]=-u,\qquad
[z,v]=v,\qquad
vu=P\big(z-\tfrac{1}{2}\big),\qquad
uv=P\big(z+\tfrac{1}{2}\big)
\]
and filtration defined by \eqref{gra}.
Thus we have
\[
[u,v]=P\big(z+\tfrac{1}{2}\big)-P\big(z-\tfrac{1}{2}\big)=nz^{n-1}+\cdots ,
\]
i.e., the quasiclassical limit indeed recovers the algebra $A$ with the above Poisson bracket.

Note that we may consider the algebra $\mc A_P$ for a polynomial $P$ that is not necessarily monic. However, we can always reduce to the monic case by rescaling $u$ and/or $v$. Also by transformations $z\mapsto z+\beta$ we can make sure that the subleading term of $P$ is zero, i.e.,
\[
P(x)=x^n+c_2x^{n-2}+\dots +c_n.
\]
Thus the quantization~$\mc A$ depends on $n-1$ essential parameters (the roots of $P$, which add up to zero).

The algebra $\mc A$ decomposes as a direct sum of eigenspaces of
$\ad z$:
\[
\mc{A}=\oplus_{k\in \ZN}\mc{A}_{k}.
\]
If $b\in \mc A_m$, we will say that $b$ has {\it weight} $m$.
The weight decomposition of $\mc A$ can be viewed as a~$\mathbb C^\times$-action; namely, for $t\in \mathbb C^\times$ let
\begin{equation}\label{gt}
g_t=t^{\ad z}\colon \ \mc{A}\to \mc{A}
\end{equation}
be the automorphism of $\mc{A}$ given by
\[
g_t(v)= tv,\qquad
g_t(u)=t^{-1}u,\qquad
g_t(z)=z.
\]
Then $g_t(b)=t^mb$ if $b$ has weight $m$.

\begin{Example} \qquad
\begin{enumerate}\itemsep=0pt
\item[$1.$] Let $n=1$, $P(x)=x$. Then $\mc A$ is the Weyl algebra
generated by $u$, $v$ with $[u,v]=1$, and $z=vu+\tfrac{1}{2}=uv-\tfrac{1}{2}$.

\item[$2.$] Let $n=2$ and $P(x)=x^2-C$. Then setting $e=v$, $f=-u$, $h=2z$, we get
\[
[h,e]=2e,\qquad
[h,f]=-2f,\qquad
[e,f]=h,\qquad
fe=-\big(\tfrac{h+1}{2}\big)^2+C,
\]
i.e., $\mc A$ is the quotient of the universal enveloping algebra $U(\mathfrak{sl}_2)$ by the relation $fe+\big(\tfrac{h+1}{2}\big)^2=C$, where $fe+\big(\tfrac{h+1}{2}\big)^2$ is the Casimir element.
\end{enumerate}
\end{Example}

\subsection{Even quantizations}
Let $s$ be the automorphism of $\mc A$
given by
\[
s(u)=(-1)^nu,\qquad s(v)=(-1)^nv,\qquad s(z)=z,
\]
in other words, we have $s=g_{(-1)^n}$. Thus $\gr s\colon A\to A$
equals $(-1)^d$, where $d$ is the degree operator.
Recall \cite[Section~2.3]{ES}, that a filtered quantization $\mathcal{A}$ is called {\it even} if it is equipped with an antiautomorphism $\sigma$ such that $\sigma^2=s$ and $\gr\sigma={\rm i}^d$, and that $\sigma$ is unique if exists~\cite[Remark~2.10]{ES}. This means that $\sigma (z)=-z$, $\sigma (u)={\rm i}^n u$, $\sigma (v)={\rm i}^n v$. It is easy to see that $\sigma$ exists if and only if
\[
(-1)^n P\big(z-\tfrac{1}{2}\big)=(-1)^n vu=\sigma(v)\sigma(u)=\sigma (uv)=\sigma\big(P\big(z+\tfrac{1}{2}\big)\big)=P\big({-}z+\tfrac{1}{2}\big).
\]
This is equivalent to
\[
P(-x)=(-1)^n P(x),
\]
i.e., $P$ contains only terms $x^{n-2i}$. Thus even quantizations of $A$ are parametrized by $[n/2]$ essential parameters, and all quantizations for $n\le 2$ are even.

\subsection{Quantizations with a conjugation and a quaternionic structure}\label{conju}

Recall \cite[Section~3.6]{ES} that a conjugation on~$\mc A$ is an antilinear filtration preserving automorphism $\rho\colon \mc A\to \mc A$ that commutes with $s$. We~will consider conjugations on~$\mc A$
given by
\begin{equation}\label{rho}
\rho(v)=\lambda u,\qquad \rho(u)=\lambda_* v,\qquad \rho(z)=-z,
\end{equation}
where $\lambda,\lambda_*\in \mathbb C^\times$; it easy to show that they are the only ones up to symmetry, using that any two such conjugations differ by a filtration preserving automorphism commuting with $s$. The~auto\-morphism $u\mapsto \gamma^{-1} u$, $v\mapsto \gamma v$ rescales $\lambda$ by $|\gamma|^{-2}$ and $\lambda_*$ by $|\gamma|^2$, so we may assume that $|\lambda|=1$, i.e.,
\[
\lambda=\pm {\rm i}^{-n}{\rm e}^{-\pi {\rm i} c},
\]
where $c\in [0,1)$.
Then
\[
\overline{P}\big({-}z+\tfrac{1}{2}\big)=\rho\big(P\big(z+\tfrac{1}{2}\big)\big)
=\rho(uv)=\rho(u)\rho(v)=\lambda\lambda_* vu=\lambda\lambda_* P\big(z-\tfrac{1}{2}\big),
\]
i.e., $\overline{P}(-x)=\lambda\lambda_* P(x)$. Thus $\lambda_*=(-1)^n\lambda^{-1}=\pm {\rm i}^{-n}{\rm e}^{\pi {\rm i}c}$ (so $|\lambda_*|=1$) and
\[
\ovl{P}(-x)=(-1)^n P(x),
\]
i.e., ${\rm i}^nP$ is real on ${\rm i}\mathbb R$. We~also have
\[
\rho^2(u)=\overline{\lambda_*}\lambda u,\qquad \rho^2(v)=\overline{\lambda}\lambda_* v,\qquad \rho^2(z)=z,
\]
so $\rho^2=g_t$, where $g_t$ is given by \eqref{gt} and
\[
t=(-1)^n\overline{\lambda}\lambda^{-1}=(-1)^n\lambda^{-2}={\rm e}^{2\pi {\rm i}c},
\]
i.e., $|t|=1$. Thus we see that for every $t$ there are two non-equivalent conjugations, corresponding to the two choices of sign for $\lambda$, which we denote by $\rho_+$ and $\rho_-$.

In particular, consider the special case $t=(-1)^n$, i.e., $g_t=s$. Then $c=\frac{1}{2}$ for $n$ odd and $c=0$ for $n$ even.
Thus $\lambda=\pm 1$, so the conjugation $\rho$ on~$\mc A$ is given by
\[
\rho(v)=\pm u,\qquad \rho(u)=\pm (-1)^nv,\qquad \rho(z)=-z.
\]

Now assume in addition that $\mc A$ is even, i.e., $P(-x)=(-1)^nP(x)$. Then we have $\rho\sigma=\sigma^{-1}\rho$, so $\rho$ and $\sigma$ give a {\it quaternionic structure} on $\mc{A}$, cf.~\cite[Section~3.7]{ES}. So~this quaternionic structure exists if and only if $P\in \RN[x]$, $P(-x)=(-1)^n P(x)$.

\begin{Example} Let $n=2$, so $\mc A$ is the quotient
of the enveloping algebra $U(\g)$, $\g=\mathfrak{sl}_2$, by the relation $fe+\frac{(h+1)^2}{4}=C$, where
$C\in \mathbb R$. Since
\[
e=v,\qquad f=-u,\qquad h=2z,
\]
we have
\[
\rho_\pm(e)=\pm f,\qquad \rho_\pm(f)=\pm e,\qquad \rho_\pm(h)=-h.
\]
So $\g_+:=\g^{\rho_+}$ has basis
$\mathbf{x}=\frac{e+f}{2}$, $\mathbf{y}=\frac{{\rm i}(e-f)}{2}$, $\mathbf{z}=\frac{{\rm i}h}{2}$.
Thus,
\[
[\mathbf{x},\mathbf{y}]=-\mathbf{z},\qquad [\mathbf{z},\mathbf{x}]=\mathbf{y},\qquad [\mathbf{y},\mathbf{z}]=\mathbf{x}.
\]
Hence, setting $E:=\mathbf{y}-\mathbf{z}$, $F:=\mathbf{y}+\mathbf{z}$, $H:=2\mathbf{x}$,
we have
\[
[H,E]=2E,\qquad [H,F]=-2F,\qquad [E,F]=H,
\]
so $\g_+=\mathfrak{sl}_2(\mathbb R)$.

On the other hand, $\g_-:=\g^{\rho_-}$ has basis
${\rm i}\mathbf{x},{\rm i}\mathbf{y},\mathbf{z}$, hence $\g_-=\mathfrak{so}_3(\mathbb R)=\mathfrak{su}_2$.

So $\rho_+$ and $\rho_-$ correspond to the split and compact form of $\g$, respectively.
\end{Example}

\subsection{Twisted traces}
Let $\mc A=\mc A_P$ be a filtered quantization of $A$. Recall \cite[Section~3.1]{ES} that a $g_t$-{\it twisted trace} on~$\mc A$ is a linear map $T\colon \mc{A}\to \CN$ such that $T(ab)=T(bg_t(a))$, where $g_t$ is given by \eqref{gt}. It is shown in~\cite[Section~3]{ES}, that ($s$-invariant) nondegenerate twisted traces, up to scaling, correspond to ($s$-invariant) nondegenerate short star-products on~$\mc A$.

Let us classify $g_t$-twisted traces\footnote{One can show that for generic $P$ and $n>2$, the only possible filtration preserving automorphisms are $g_t$.} $T$ on~$\mc A$. The answer is given by the following proposition.

\begin{Proposition}
$T\colon\mc{A}\to \CN$ is a $g_t$-twisted trace on $\mc{A}$ if and only if
\begin{enumerate}\itemsep=0pt
\item[$(1)$]
$T(\mc{A}_j)=0$ for $j\ne 0$;
\item[$(2)$]
$T\big(S\big(z-\tfrac{1}{2}\big)P\big(z-\tfrac{1}{2}\big)\big)
=tT\big( S\big(z+\tfrac{1}{2}\big)P\big(z+\tfrac{1}{2}\big)\big)$ for all $S \in\CN[x]$.
\end{enumerate}
In particular, any twisted trace is automatically $s$-invariant.
\end{Proposition}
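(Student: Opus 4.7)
The plan is to use the weight decomposition $\mc{A}=\bigoplus_j\mc{A}_j$ together with a reduction of the trace identity to algebra generators. The key observation is that if $T(ab)=T(bg_t(a))$ holds (for all $b$) when $a=a_1$ and when $a=a_2$, then it holds for $a=a_1a_2$, since
\[
T(a_1a_2b)=T(a_2bg_t(a_1))=T(bg_t(a_1)g_t(a_2))=T(bg_t(a_1a_2)).
\]
Hence it suffices to verify the identity when $a$ runs over the algebra generators $u$, $v$, $z$.

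For the forward direction, suppose $T$ is a $g_t$-twisted trace. Taking $a=z$ gives $T([z,b])=0$ for all $b$, and applied to $b\in\mc{A}_j$ this yields $jT(b)=0$, proving (1). Taking $a=u$ and $b=vS\big(z+\tfrac{1}{2}\big)$ for $S\in\CN[x]$, and using $uv=P\big(z+\tfrac{1}{2}\big)$, $vu=P\big(z-\tfrac{1}{2}\big)$ together with the commutation $R(z)u=uR(z-1)$ (a consequence of $[z,u]=-u$), one computes
\[
T\big(P\big(z+\tfrac{1}{2}\big)S\big(z+\tfrac{1}{2}\big)\big)=t^{-1}T\big(P\big(z-\tfrac{1}{2}\big)S\big(z-\tfrac{1}{2}\big)\big),
\]
which rearranges to (2).

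For the converse, assume (1) and (2). Using the normal form $\mc{A}_j=v^j\CN[z]$ for $j>0$ and $\mc{A}_j=u^{|j|}\CN[z]$ for $j<0$, together with the reduction above, it suffices to take $b$ weight-homogeneous and $a\in\{u,v,z\}$. For $a=z$ the required identity reduces to $jT(b)=0$, which holds by (1). For $a=u$, by (1) both sides vanish unless $b$ has weight $1$; writing $b=vS\big(z+\tfrac{1}{2}\big)$, the identity $T(ub)=t^{-1}T(bu)$ is exactly condition (2). The case $a=v$ is symmetric: take $b=uS\big(z-\tfrac{1}{2}\big)$ and apply the same computation, again recovering (2).

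Finally, $s$-invariance follows from (1), since $s=g_{(-1)^n}$ acts on $\mc{A}_j$ by the scalar $(-1)^{jn}$, hence fixes $\mc{A}_0$ pointwise, and $T$ vanishes on all other weight components. The main obstacle is purely bookkeeping — tracking the half-integer shifts in the argument of $S$ so that the derived identity matches condition (2) as stated; there is no conceptual difficulty once one observes that generator-level verification suffices.
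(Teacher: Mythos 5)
Your proof is correct and follows essentially the same route as the paper's: reduce the twisted-trace identity to the generators $u$, $v$, $z$, extract condition (1) from the $z$-case, and extract condition (2) from the $u$- and $v$-cases using $uv=P\big(z+\tfrac{1}{2}\big)$, $vu=P\big(z-\tfrac{1}{2}\big)$ and $R(z)u=uR(z-1)$. The only difference is that you make the generator-reduction step explicit, which the paper leaves implicit ("it is enough to check that\dots").
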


\begin{proof} Suppose $T$ satisfies (1), (2).
It is enough to check that
\[
T(ub)=t^{-1}T(bu),\qquad T(vb)=tT(bv),\qquad T(zb)=T(bz)
\]
for $b\in \mc{A}$.

The equality $T(zb)=T(bz)$ says that $T(\mc A_j)=0$ for $j\ne 0$, which is condition (1).

By (1), it is enough to check the equality $T(ub)=t^{-1} T(bu)$ for $b\in \mc{A}_{-1}$. In this case $b=vS\big(z+\tfrac{1}{2}\big)$ for some polynomial $S$. We~have
\begin{gather*}
T(ub)=T\big(uvS\big(z+\tfrac{1}{2}\big)\big)=T\big(P\big(z+\tfrac{1}{2}\big)S\big(z+\tfrac{1}{2}\big)\big),
\\
T(bu)=T\big(vS\big(z+\tfrac{1}{2}\big)u\big)=T\big(vu S\big(z-\tfrac{1}{2}\big)\big)=T\big(P\big(z-\tfrac{1}{2}\big)S\big(z-\tfrac{1}{2}\big)\big),
\end{gather*}
which yields the desired identity using (2).

Similarly, it is enough to check the equality $T(vb)=tT(bv)$ for $b\in \mc{A}_1$. In this case $b=u S\big(z-\tfrac{1}{2}\big)$. We~have
\begin{gather*}
T(vb)=T\big(vu S\big(z-\tfrac{1}{2}\big)\big)=T\big(P\big(z-\tfrac{1}{2}\big)S\big(z-\tfrac{1}{2}\big)\big),
\\
T(bv)=T\big(u S\big(z-\tfrac{1}{2}\big)v\big)=T\big(uv S\big(z+\tfrac{1}{2}\big)\big)=T\big(P\big(z+\tfrac{1}{2}\big)S\big(z+\tfrac{1}{2}\big)\big),
\end{gather*}
which again gives the desired identity using (2).

Conversely, the same argument shows that if $T$ is a $g_t$-twisted trace
then (1), (2) hold.
\end{proof}

Thus we get

\begin{Corollary}\label{natiso}
The space of $g_t$-twisted traces on~$\mc A$ is naturally isomorphic to the space
\[
\bigl(\CN[z]/\big\{S\big(z-\tfrac{1}{2}\big)P\big(z-\tfrac{1}{2}\big)-t S\big(z+\tfrac{1}{2}\big)P\big(z+\tfrac{1}{2}\big)\,|\, S \in\CN[z]\big\}\bigr)^*
\]
and has dimension $n$ if $t\neq 1$ and dimension $n-1$ if $t=1$.
\end{Corollary}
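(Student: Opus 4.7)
The plan is to apply the Proposition directly: a $g_t$-twisted trace $T$ is exactly a linear functional on $\mc A$ satisfying its conditions (1) and (2). Condition (1) forces $T$ to vanish on every $\mc A_j$ with $j\ne 0$, so $T$ is determined by its restriction to $\mc A_0=\CN[z]$; condition (2) says this restriction annihilates the subspace
\[
I_t:=\bigl\{S\bigl(z-\tfrac{1}{2}\bigr)P\bigl(z-\tfrac{1}{2}\bigr)-t\,S\bigl(z+\tfrac{1}{2}\bigr)P\bigl(z+\tfrac{1}{2}\bigr)\colon S\in\CN[z]\bigr\}\subset \CN[z].
\]
This yields the claimed natural isomorphism with $(\CN[z]/I_t)^*$ without further work.

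The remaining task is the dimension count. I would introduce the linear map $\phi_t\colon\CN[z]\to \CN[z]$ defined by $\phi_t(S)=S\bigl(z-\tfrac{1}{2}\bigr)P\bigl(z-\tfrac{1}{2}\bigr)-t\,S\bigl(z+\tfrac{1}{2}\bigr)P\bigl(z+\tfrac{1}{2}\bigr)$, whose image equals $I_t$, and track its top-degree behavior. Since $P$ is monic of degree $n$, if $\deg S=k$ with leading coefficient $a$ then both summands are polynomials of degree $k+n$ with leading coefficient $a$. For $t\ne 1$ the leading terms do not cancel and $\deg\phi_t(S)=k+n$ with leading coefficient $a(1-t)$; for $t=1$ they cancel, and a short Taylor expansion of $SP$ about $z$ gives $\phi_1(S)=-(SP)'(z)+(\text{lower order})$, so $\deg\phi_1(S)=k+n-1$ with leading coefficient $-a(k+n)$.

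The conclusion then follows from a standard upper-triangular degree argument: the polynomials $\phi_t(z^k)$, $k\ge 0$, have pairwise distinct degrees (starting at $n$ if $t\ne 1$ and at $n-1$ if $t=1$), so they are linearly independent and, together with $\{1,z,\dots,z^{n-1}\}$ (respectively $\{1,z,\dots,z^{n-2}\}$), form a basis of $\CN[z]$. Hence $\dim(\CN[z]/I_t)$ equals $n$ when $t\ne 1$ and $n-1$ when $t=1$.

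I do not foresee a serious obstacle: the Proposition does all the structural work and the dimension count is a short leading-term check. The only point that deserves a moment of attention is the $t=1$ case, where one must confirm that the coefficient $-a(k+n)$ of $z^{k+n-1}$ is nonzero for every $k\ge 0$; since $n\ge 1$, this holds automatically.
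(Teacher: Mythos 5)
Your proposal is correct and matches the paper's (implicit) argument: the isomorphism is immediate from the Proposition, and the dimension count via the leading-term/triangularity behaviour of $S\mapsto S\big(z-\tfrac12\big)P\big(z-\tfrac12\big)-tS\big(z+\tfrac12\big)P\big(z+\tfrac12\big)$ is exactly the observation the paper records in the Remark following the Stieltjes-transform proposition. Nothing is missing.
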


\subsection{The formal Stieltjes transform}
There is a useful characterization of the space $g_t$-twisted traces in
terms of generating functions. Given a linear functional $T$ on $\CN[z]$,
its {\em formal Stieltjes transform} is the generating function
\[
F_T(x):=\sum_{n\ge 0} x^{-n-1} T(z^n) \in \CN\big[\big[x^{-1}\big]\big],
\]
or equivalently $F_T(x)=T\big((x-z)^{-1}\big)$, with $(x-z)^{-1}$ itself expanded as a
formal power series in~$x^{-1}$.

\begin{Proposition}
 The formal Stieltjes transform of a $g_t$-twisted trace on~$\mc A$
 satisfies
 \[
 P(x)\big(F_T\big(x+\tfrac{1}{2}\big)-t F_T\big(x-\tfrac{1}{2}\big)\big)\in \CN[x],
 \]
 and this establishes an isomorphism of the space of $g_t$-twisted traces
 with the space of polynomials of degree $\le n-1$ $($for $t\ne 1)$ or $\le
 n-2$ $($for $t=1)$.
\end{Proposition}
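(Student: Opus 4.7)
The plan is to exploit the characterization of $g_t$-twisted traces from the preceding Proposition: condition (1) says that $T$ is determined by its restriction to $\mc A_0 = \CN[z]$, and condition (2) is precisely the invariance property that will force the difference $F_T(x+\tfrac12)-tF_T(x-\tfrac12)$ to have $P(x)^{-1}$ as a formal factor.

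First I would rewrite $F_T(x\pm\tfrac12) = T\bigl(\tfrac{1}{x-(z\mp1/2)}\bigr)$ (as formal power series in $x^{-1}$) and use the elementary identity
\[
\frac{P(x)}{x-y}=\frac{P(x)-P(y)}{x-y}+\frac{P(y)}{x-y}
\]
with $y=z\mp\tfrac12$. The first summand $\frac{P(x)-P(y)}{x-y}$ is a polynomial in $x$ of degree $\le n-1$ whose coefficients are polynomials in $y$, so after applying $T$ it contributes a polynomial in $x$ of degree $\le n-1$. The remaining contribution is
\[
T\!\left(\frac{P(z-\tfrac12)}{x-(z-\tfrac12)}-\frac{tP(z+\tfrac12)}{x-(z+\tfrac12)}\right)=\sum_{k\ge 0}x^{-k-1}\bigl[T\bigl((z-\tfrac12)^kP(z-\tfrac12)\bigr)-tT\bigl((z+\tfrac12)^kP(z+\tfrac12)\bigr)\bigr],
\]
and every coefficient vanishes by condition (2) of the Proposition applied to $S(z)=z^k$. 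This shows $P(x)(F_T(x+\tfrac12)-tF_T(x-\tfrac12))$ is a polynomial of degree $\le n-1$. For $t=1$, I would then note that $F_T(x)=T(1)x^{-1}+O(x^{-2})$ gives $F_T(x+\tfrac12)-F_T(x-\tfrac12)=O(x^{-2})$, lowering the degree bound by one.

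For the isomorphism, I would prove injectivity of the map $\Phi\colon T\mapsto P(x)(F_T(x+\tfrac12)-tF_T(x-\tfrac12))$ directly: if $\Phi(T)=0$, then since $P$ is nonzero in $\CN[[x^{-1}]]$ we get $F_T(x+\tfrac12)=tF_T(x-\tfrac12)$, i.e., $T((z-\tfrac12)^k)=tT((z+\tfrac12)^k)$ for all $k\ge 0$. For $t\ne 1$, $k=0$ gives $T(1)=0$, and induction on $k$ yields $T(z^k)=0$ for all $k$. For $t=1$, the identity $T(f(z))=T(f(z+1))$ for all $f\in\CN[z]$ gives $T(g(z+1)-g(z))=0$ for all $g\in\CN[z]$; since $\{g(z+1)-g(z):g\in\CN[z]\}=\CN[z]$ by degree considerations, again $T\equiv 0$ on $\CN[z]$, hence $T=0$ by (1).

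Surjectivity then follows from a dimension count against the target spaces (dimensions $n$ and $n-1$ respectively), using Corollary~\ref{natiso}. The only mildly delicate point is the leading-order analysis in the $t=1$ case; the rest is bookkeeping once the partial-fraction identity for $P(x)/(x-y)$ is in hand.
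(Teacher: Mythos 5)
Your proof is correct and follows essentially the same route as the paper's: the same splitting of $P(x)/(x-y)$ into $\frac{P(x)-P(y)}{x-y}$ plus $\frac{P(y)}{x-y}$, vanishing of the singular part via the trace condition, the $O\big(x^{-2}\big)$ observation for $t=1$, and surjectivity by dimension count against Corollary~\ref{natiso}. The only difference is cosmetic: you verify injectivity by inducting on the moments $T(z^k)$, whereas the paper cites the triangularity (hence injectivity) of $F\mapsto F\big(x+\tfrac12\big)-tF\big(x-\tfrac12\big)$ on $x^{-1}\CN\big[\big[x^{-1}\big]\big]$; both arguments are sound.
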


\begin{proof}
 We may write
 \begin{gather*}
 P(x)\big(F_T\big(x+\tfrac{1}{2}\big) - t F_T\big(x-\tfrac{1}{2}\big)\big)
 = T\bigg(\frac{P(x)}{x+\frac{1}{2}-z}-t \frac{P(x)}{x-\frac{1}{2}-z}\bigg)
 \\ \hphantom{ P(x)\big(F_T\big(x+\tfrac{1}{2}\big) - t F_T\big(x-\tfrac{1}{2}\big)\big)}
 {}= T\bigg(\frac{P\big(z-\frac{1}{2}\big)}{x+\frac{1}{2}-z}-t \frac{P\big(z+\frac{1}{2}\big)}{x-\frac{1}{2}-z}\bigg)
 \\ \hphantom{ P(x)\big(F_T\big(x+\tfrac{1}{2}\big) - t F_T\big(x-\tfrac{1}{2}\big)\big)=}
{}+ T\bigg(\frac{P(x)-P\big(z-\frac{1}{2}\big)}{x-\big(z-\frac{1}{2}\big)}-t \frac{P(x)-P\big(z+\frac{1}{2}\big)}{x-\big(z+\frac{1}{2}\big)}\bigg).
 \end{gather*}
 In the final expression, the second term is the image under $T$ of a
 polynomial in $z$ and $x$, while the first term expands as
 \[
 \sum_{n\ge 0} x^{-n-1} T\big(P\big(z-\tfrac{1}{2}\big)\big(z-\tfrac{1}{2}\big)^n-t P\big(z+\tfrac{1}{2}\big)\big(z+\tfrac{1}{2}\big)^n\big) = 0.
 \]
 Since the map $F\mapsto F\big(x+\tfrac{1}{2}\big)-t F\big(x-\tfrac{1}{2}\big)$ is injective on $x^{-1}\mathbb C[[1/x]]$, this establishes an
 injective map from $g_t$-twisted traces to polynomials of degree
 $<\deg(P)$. This establishes the conclusion for $t\ne 1$, with surjectivity following by dimension count.
 Finally, for $t=1$, we need simply observe that for any $F\in x^{-1}\CN\big[\big[x^{-1}\big]\big]$,
 $F_T\big(x+\frac{1}{2}\big)-F_T\big(x-\tfrac{1}{2}\big)\in x^{-2}\CN\big[\big[x^{-1}\big]\big]$, and thus the polynomial has
 degree $<\deg(P)-1$, and surjectivity again follows from dimension count.
\end{proof}

\begin{Remark} It is easy to see that the map $F(x)\mapsto F\big(x+\frac{1}{2}\big)-t F\big(x-\tfrac{1}{2}\big)$ acts triangularly
 on $x^{-1}\CN\big[\big[x^{-1}\big]\big]$, of degree $0$ (with nonzero leading coefficients)
 if $t\ne 1$ and degree $-1$ (ditto) if $t=1$, letting one see directly
 that there is a unique solution of $P(x)\big(F\big(x+\frac{1}{2}\big)-t F\big(x-\tfrac{1}{2}\big)\big)=R(x)$
 for any polynomial $R$ satisfying the degree constraint.
\end{Remark}

\begin{Remark}
 A similar argument establishes an isomorphism between linear functionals
 satisfying $T\big(P\big(q^{-\frac{1}{2}}z\big)S\big(q^{-\frac{1}{2}}z\big)-qt P\big(q^{\frac{1}{2}}z\big)S\big(q^{\frac{1}{2}}z\big)\big)=0$ and
 elements $F\in x^{-1}\CN\big[\big[x^{-1}\big]\big]$ such that
 \[
 P(x)\big(F_T\big(q^{\frac{1}{2}}x\big)-t F_T\big(q^{-\frac{1}{2}}x\big)\big) \in \CN[x],
 \]
 or, for $t=1$, between linear functionals satisfying
 \[
 T\big(z^{-1}\big(P\big(q^{-\frac{1}{2}}z\big)S\big(q^{-\frac{1}{2}}z\big)
 -P\big(q^{\frac{1}{2}}z\big)S\big(q^{\frac{1}{2}}z\big)\big)\big)=0
 \]
 and formal series satisfying
 \[
 P(x) x^{-1}\big(F_T\big(q^{\frac{1}{2}}x\big)-F_T\big(q^{-\frac{1}{2}}x\big)\big)\in \CN[x].
 \]
\end{Remark}

\section{An analytic construction of twisted traces}\label{sec3}

\subsection[Construction of twisted traces when all roots of P(x) satisfy |Re alpha|<1/2]
{Construction of twisted traces when all roots of $\boldsymbol{P(x)}$ satisfy $\boldsymbol{|\Re\alpha|<\frac{1}{2}}$}
Let $t=\exp(2\pi {\rm i} c)$, where $0\le \Re c<1$ (clearly, such $c$ exists and is unique).

Let $P(x)=\prod_{j=1}^n(x-\alpha_j)$. Define
\[
\bP(X):=\prod_{j=1}^n\big(X+{\rm e}^{2\pi {\rm i}\alpha_j}\big).
\]
 When $P(x)$ satisfies the equation $\ovl{P}(-x)=(-1)^nP(x)$
 (the condition for existence of a conjugation $\rho$) the polynomial $\bP(X)$ has real coefficients.

\begin{Proposition}\label{classtr}
\label{PropTracesForOpenStrip}
Assume that every root $\alpha$ of $P(x)$ satisfies $|\Re\alpha|<\frac{1}{2}$. Also suppose first that $t$ does not belong to $\RN_{>0}\setminus\{1\}$, i.e., $\Re c\in (0,1)$ or $c=0$. Then every $g_t$-twisted trace is given by\footnote{Here $|{\rm d}x|$ denotes the Lebesgue measure on the imaginary axis.}
\[
T(R(z))=\int_{{\rm i}\RN} R(x)w(x)|{\rm d}x|, \qquad R\in \mathbb C[x],
\]
 where $w$ is the {\it weight function} defined by the formula
 \[
 w(x)=w(c,x):={\rm e}^{2\pi {\rm i}cx}\frac{G({\rm e}^{2\pi {\rm i} x})}{\bP({\rm e}^{2\pi {\rm i} x})},
 \]
 where $G$ is a polynomial of degree $\le n-1$ and $G(0)=0$ if $c=0$.
\end{Proposition}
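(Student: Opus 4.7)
The plan is to verify directly that the integral formula defines a $g_t$-twisted trace on $\CN[z]$ (extended by zero on weight-$\ne 0$ components of $\mc A$), and then to match the parameter space of admissible $G$'s against the dimensions in Corollary~\ref{natiso} together with an injectivity argument.

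First I would establish convergence of $\int_{{\rm i}\RN} R(x)w(x)|{\rm d}x|$ for every polynomial $R$. The poles of $w$ are at $x=\alpha_j+\tfrac12+k$ for $k\in\ZN$, and the assumption $|\Re\alpha_j|<\tfrac12$ keeps them off ${\rm i}\RN$. Decay as $x={\rm i}t\to{\rm i}\infty$ is controlled by $|e^{2\pi {\rm i}cx}|=e^{-2\pi\Re(c)t}$ (exponential decay when $\Re c>0$; when $c=0$ the assumption $G(0)=0$ supplies the missing factor from the ratio $G/\bP$). Decay as $t\to-\infty$ uses $\deg G\le n-1<n=\deg\bP$ together with $\Re c<1$.

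Second, the key identity is the quasi-periodicity $w(x+1)=t\, w(x)$, which is immediate from the definition and equivalently reads $w(x+\tfrac12)=t\, w(x-\tfrac12)$. Substituting $y=x-\tfrac12$ on the left and $y=x+\tfrac12$ on the right, condition~(2) of the preceding proposition reduces to
\[
\int_{{\rm i}\RN-1/2}S(y)P(y)w(y-\tfrac12)|{\rm d}y|=\int_{{\rm i}\RN+1/2}S(y)P(y)w(y-\tfrac12)|{\rm d}y|.
\]
The only poles of $w(y-\tfrac12)$ in the closed strip $|\Re y|\le\tfrac12$ sit at $y=\alpha_j$, and these are cancelled by the zeros of $P$; combined with the decay from step one, Cauchy's theorem gives the equality.

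Third comes the dimension count: admissible $G$'s form an $n$-dimensional space for $t\ne 1$ and an $(n-1)$-dimensional subspace (enforced by $G(0)=0$) when $t=1$, matching the dimensions in Corollary~\ref{natiso}. Thus surjectivity of $G\mapsto T_G$ reduces to injectivity, which I would obtain via the formal Stieltjes transform. Computing $P(x)\bigl(F_{T_G}\big(x+\tfrac12\big)-tF_{T_G}\big(x-\tfrac12\big)\bigr)$ by shifting one of the two Cauchy integrals through the period $1$ (using $w(y+1)=tw(y)$), the answer is the contour integral of $w(y)/(x+\tfrac12-y)$ around the strip $0\le\Re y\le 1$. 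The only poles of $w$ in this strip lie at $y=\alpha_j+\tfrac12$, and a residue computation produces the Lagrange-like polynomial of degree $\le n-1$ whose values at $x=\alpha_j$ are nonzero multiples of $G(-e^{2\pi {\rm i}\alpha_j})$. Since $\bP(X)=\prod_j(X+e^{2\pi {\rm i}\alpha_j})$, vanishing of this polynomial forces $G$ to vanish at all roots of $\bP$, hence $G=0$ when those roots are distinct.

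The main technical obstacle I anticipate is the degenerate case in step three: if some $\alpha_j$ differ by integers, then $\bP$ acquires multiple roots and the simple residue calculation collapses. This either requires computing higher-order residues of $w(y-\tfrac12)$ at coinciding poles, or a continuity/limiting argument pushing the generic result to the boundary locus. The justifications of the contour shifts will also require explicit uniform decay bounds on $|w(y)|$ as $|\Im y|\to\infty$ in a strip, rather than just pointwise decay on the axis.
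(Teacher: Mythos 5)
Your verification that the integral formula defines a twisted trace is exactly the paper's argument: the three properties of $w$ (quasi-periodicity $w(x+1)=tw(x)$, uniform exponential decay as $\Im x\to\pm\infty$, holomorphy of $P(x)w\big(x+\tfrac12\big)$ in the closed strip), followed by the change of variables and Cauchy's theorem on $\partial\big(\big[-\tfrac12,\tfrac12\big]\times\RN\big)$, and the concluding dimension count against Corollary~\ref{natiso}. Where you diverge is the injectivity of $G\mapsto T_G$: the paper simply asserts it is clear, while you propose a residue computation on the formal Stieltjes transform and then correctly worry that it degenerates when $\bP$ has multiple roots. That obstacle is real for your route, but it is avoidable: if $T_G=0$ then all moments $\int_{{\rm i}\RN}x^k w(x)|{\rm d}x|$ vanish, and since $w$ decays exponentially its Fourier transform (as a function on the line ${\rm i}\RN$) extends holomorphically to a strip; vanishing of all moments kills all derivatives of that holomorphic function at a point, hence $w\equiv 0$ and so $G=0$. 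This is precisely the argument the paper later isolates as Lemma~\ref{LemClassicalDense}, it is insensitive to multiplicities of the roots of $\bP$, and it also supplies the uniform decay bounds you flag as needed to justify the contour shifts. So your proof is correct in substance; I would just replace the Stieltjes-transform injectivity step with the moment/Fourier argument to close the degenerate case cleanly.
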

\begin{proof}
It is easy to see that the function $w(x)$ enjoys the following properties:
\begin{enumerate}\itemsep=0pt
\item[$(1)$]
$w(x+1)=tw(x)$;
\item[$(2)$]
$|w(x)|$ decays exponentially and uniformly when $\Im x$ tends to $\pm\infty$;
\item[$(3)$]
$w\big(x+\frac{1}{2}\big)P(x)$ is holomorphic when $|\Re x|\leq \frac{1}{2}$.
\end{enumerate}

Indeed, (2) holds because the degree of $G$ is strictly less than the degree of $\mathbf{P}$ and either ${\rm Re}(c)>0$ or $G(0)=0$, and (3) holds because all roots of $P$ are in the strip $|{\rm Re}\alpha|<\frac{1}{2}$.

Let $T(R(z)):=\int_{{\rm i}\RN} R(x)w(x)|{\rm d}x|$. We~should check that
\[
T\big(tS\big(z+\tfrac{1}{2}\big)P\big(z+\tfrac{1}{2}\big)-S\big(z-\tfrac{1}{2}\big)P\big(z-\tfrac{1}{2}\big)\big)=0.
\]
We have \begin{gather*}
T\big(tS\big(z+\tfrac{1}{2}\big)P\big(z+\tfrac{1}{2}\big)-S\big(z-\tfrac{1}{2}\big)
P\big(z-\tfrac{1}{2}\big)\big)
\\ \qquad
{}=\int_{{\rm i}\RN}tS\big(x+\tfrac{1}{2}\big)P\big(x+\tfrac{1}{2}\big)w(x)|{\rm d}x|-\int_{{\rm i}\RN}S\big(x-\tfrac{1}{2}\big)P\big(x-\tfrac{1}{2}\big)w(x)|{\rm d}x|
\\ \qquad
{}=\int_{\frac{1}{2}+{\rm i}\RN}tS(x)P(x)w\big(x-\tfrac{1}{2}\big)|{\rm d}x|-\int_{-\frac{1}{2}+{\rm i}\RN}S(x)P(x)w\big(x+\tfrac{1}{2}\big)|{\rm d}x|
\\ \qquad
{}=\int_{\frac{1}{2}+{\rm i}\RN}S(x)P(x)w\big(x+\tfrac{1}{2}\big)|{\rm d}x|-\int_{-\frac{1}{2}+{\rm i}\RN}S(x)P(x)w\big(x+\tfrac{1}{2}\big)|{\rm d}x|
\\ \qquad
{}=\frac{1}{{\rm i}}\int_{\partial \left(\left[-\frac{1}{2},\frac{1}{2}\right]\times \RN\right)}S(x)P(x)w\big(x+\tfrac{1}{2}\big){\rm d}x.
\end{gather*}
But this integral vanishes by the Cauchy theorem since $S(x)P(x)w\big(x+\tfrac{1}{2}\big)$ is holomorphic when $|\Re x|\leq \frac{1}{2}$ and decays exponentially as $\Im x\to\pm {\rm i}\infty$.

By Corollary~\ref{natiso}, the space of polynomials $G(X)$ has the same dimension as the space of $g_t$-twisted traces, and the map sending polynomials $G$ to traces is clearly injective, so we have described all traces.
\end{proof}

Now consider the remaining case $t\in \mathbb R\setminus \lbrace 1\rbrace$, i.e., $c\in {\rm i}\mathbb R\setminus \lbrace 0\rbrace$. In this case the function $w(x)$ does not decay at $+{\rm i}\infty$, so the integral in Proposition~\ref{classtr} is not convergent. However, we can write the formula for $T(R(z))$ as follows, so that it makes sense in this case:
\[
T(R(z))=\lim_{\delta\to 0+}\int_{{\rm i}\RN} R(x)w(c+\delta,x)|{\rm d}x|.
\]
Alternatively, one may say that
$T(R(z))$ is the value of the Fourier transform of the distribution
$R(-{\rm i}y)w(0,-{\rm i}y)$ at the point ${\rm i}c$ (it is easy to see that this Fourier transform is given by an~analytic function outside of the origin). We~then have the following easy generalization of~Proposition~\ref{classtr}:

\begin{Proposition} With this modification, Proposition~$\ref{classtr}$ is valid for all $t$.
\end{Proposition}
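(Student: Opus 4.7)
My strategy is to obtain this as the $\delta\to 0^+$ limit of Proposition~\ref{classtr}. Fix $c\in {\rm i}\RN\setminus\{0\}$ and a polynomial $G$ of degree $\le n-1$. For $\delta\in (0,1)$ the shifted parameter $c+\delta$ has $\Re(c+\delta)=\delta\in(0,1)$, so Proposition~\ref{classtr} applies and the functional
\[
T_\delta(R):=\int_{{\rm i}\RN} R(x)\,w(c+\delta,x)\,|{\rm d}x|
\]
converges absolutely and defines a $g_{t_\delta}$-twisted trace with $t_\delta = t\,{\rm e}^{2\pi {\rm i}\delta}$. I claim that $T(R):=\lim_{\delta\to 0^+} T_\delta(R)$ exists for every polynomial $R$ and is the required $g_t$-twisted trace.

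The one delicate point is existence of the limit. Substituting $x={\rm i}y$, the integrand equals $R({\rm i}y)\,{\rm e}^{-2\pi(c+\delta)y}\,G({\rm e}^{-2\pi y})/\bP({\rm e}^{-2\pi y})$. On $(-\infty,0]$ the ratio $G/\bP$ decays at least like ${\rm e}^{2\pi y}$ (since $\deg\bP>\deg G$), and this dominates the growth ${\rm e}^{2\pi\delta|y|}$ for any $\delta<1$, so dominated convergence handles the limit there. On $[0,\infty)$ I would use the Taylor expansion
\[
\frac{G(X)}{\bP(X)} = \frac{G(0)}{\bP(0)} + X\,\tilde G(X)
\]
with $\tilde G$ holomorphic near $0$: the $X\tilde G(X)$-piece carries an extra factor ${\rm e}^{-2\pi y}$ and is again treated by dominated convergence, while the constant piece contributes
\[
\frac{G(0)}{\bP(0)}\int_0^\infty R({\rm i}y)\,{\rm e}^{-2\pi(c+\delta)y}\,{\rm d}y,
\]
a finite linear combination of incomplete Gamma integrals whose $\delta\to 0^+$ limit is finite precisely because $c\neq 0$ keeps the exponent $-2\pi c$ bounded away from $0$.

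Granted the limit, the trace identity $T_\delta\big(S\big(z-\tfrac12\big)P\big(z-\tfrac12\big)\big)=t_\delta\,T_\delta\big(S\big(z+\tfrac12\big)P\big(z+\tfrac12\big)\big)$ passes to $\delta=0^+$ by continuity (using $t_\delta\to t$), so $T$ is a $g_t$-twisted trace. To see that every $g_t$-trace arises this way, note that the map $G\mapsto T$ is injective (the large-$|y|$ asymptotics of $w(c,{\rm i}y)$ recover the coefficients of $G$) and its image is $n$-dimensional, which matches the dimension given by Corollary~\ref{natiso} for $t\ne 1$; hence the map is a bijection. The principal obstacle is the asymptotic behaviour at $+{\rm i}\infty$, where for $c\in {\rm i}\RN\setminus\{0\}$ the weight merely oscillates; the Fourier-transform reformulation in the statement packages this cleanly, presenting $T(R(z))$ as the $\delta\to 0^+$ boundary value of an analytic function of $c$ defined on the open half-plane $\Re c>0$.
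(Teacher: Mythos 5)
Your argument is, in substance, the one the paper has in mind: the paper offers no written proof of this proposition beyond the limit definition itself (it calls the statement an ``easy generalization''), and the content that actually needs checking is exactly what you supply --- that $\lim_{\delta\to 0^+}T_\delta(R)$ exists (by isolating the constant term $G(0)/\bP(0)$ of $G/\bP$ at $X=0$ on the half-line where the weight only oscillates, and using dominated convergence elsewhere), that the twisted-trace identity passes to the limit because $t_\delta\to t$, and that the dimension then matches Corollary~\ref{natiso}. Your observation that $t_\delta=t\,{\rm e}^{2\pi {\rm i}\delta}\notin\RN_{>0}$, so that Proposition~\ref{classtr} genuinely applies for each $\delta>0$, is also correct.

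The one step that does not work as written is the injectivity of $G\mapsto T$. The reason you give --- that the large-$|y|$ asymptotics of $w(c,{\rm i}y)$ recover the coefficients of $G$ --- shows only that $G\mapsto w$ is injective; but $T$ is a linear functional (a regularized moment sequence of $w$), not the function $w$ itself, so you still must rule out a nonzero $G$ whose regularized moments all vanish. The repair is the analytic-continuation argument your final sentence gestures at: the Laplace transform $h(s)=\int_{\RN}{\rm e}^{-2\pi sy}\,G\big({\rm e}^{-2\pi y}\big)\bP\big({\rm e}^{-2\pi y}\big)^{-1}\,{\rm d}y$ extends analytically to a neighborhood of the closed strip $0\le\Re s<1$ minus the origin (the only obstruction being the pole $G(0)/(2\pi s\,\bP(0))$ at $s=0$), and $T\big(z^k\big)$ equals $h^{(k)}(c)$ up to nonzero constants; if these all vanish, then $h\equiv 0$ near $c\ne 0$, hence on the open strip by uniqueness of analytic continuation, hence $G/\bP\equiv 0$ by injectivity of the Laplace transform, so $G=0$. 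With that substitution the proof is complete; note that the paper's own proof of Proposition~\ref{classtr} likewise dismisses injectivity as ``clear,'' so this is the one point at which both you and the paper owe the reader an argument.
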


Consider now the special case of even quantizations.
Recall \cite[Section~3.3]{ES} that nondegenerate {\it even} short star-products on $A$ correspond to nondegenerate $s$-twisted $\sigma$-invariant traces $T$ on~various even quantizations $\mc A$ of $A$, up to scaling. So~let us classify such traces. As shown above, $s$-twisted traces $T$ correspond to $w(x)$ such that $w(x+1)=(-1)^n w(x)$. Also, it is easy to see that such $T$ is $\sigma$-invariant if and only if $T(R(z))=T(R(-z))$. We~have
\[
T(R(-z))=\int_{{\rm i}\RN} R(-x)w(x) |{\rm d}x|=\int_{{\rm i}\RN} R(x)w(-x) |{\rm d}x|.
\]
So $T$ is $\sigma$-invariant of and only if $w(x)=w(-x)$. Thus we have the following proposition.

\begin{Proposition}
Suppose that $\mc{A}$ is an even quantization of $A$. Then $s$-twisted $\sigma$-invariant traces $T$ are given by the formula
\[
T(R(z))=\int_{{\rm i}\RN} R(x)w(x)|{\rm d}x|,
\]
where $w$ is as in Proposition~$\ref{PropTracesForOpenStrip}$ and
$w(x)=w(-x)=(-1)^n w(x+1)$.
\end{Proposition}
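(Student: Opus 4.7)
The plan is to specialize the preceding classification of twisted traces to $t=(-1)^n$ and then impose $\sigma$-invariance as an extra linear symmetry on the weight function $w$. Since $s = g_{(-1)^n}$ and $t = \pm 1$ falls within the hypothesis of Proposition~\ref{PropTracesForOpenStrip} (with $c=0$ for $n$ even and $c=\tfrac{1}{2}$ for $n$ odd), every $s$-twisted trace is automatically of the form $T(R(z)) = \int_{{\rm i}\RN} R(x)\, w(x)\, |{\rm d}x|$ for some $w$ of the prescribed type, and the twisting periodicity reads $w(x+1) = (-1)^n w(x)$. This takes care of one of the two required symmetries of $w$ with no further work.

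Next I translate $\sigma$-invariance of $T$ into the reflection symmetry $w(x) = w(-x)$. Because $T$ vanishes on all $\mc A_j$ with $j \neq 0$ and $\sigma$ swaps $\mc A_j$ with $\mc A_{-j}$, $\sigma$-invariance is automatic off the weight-zero subspace, so the only nontrivial condition is $T(\sigma(R(z))) = T(R(z))$ for polynomials $R$. Since $\sigma(z) = -z$ and $\sigma$ is an antiautomorphism of an algebra in which $z$ commutes with itself, $\sigma(R(z)) = R(-z)$, and the condition becomes $T(R(z)) = T(R(-z))$ for all $R \in \CN[z]$. Performing the substitution $x \mapsto -x$ in the integral, which preserves ${\rm i}\RN$ and $|{\rm d}x|$, rewrites $T(R(-z))$ as $\int_{{\rm i}\RN} R(x)\, w(-x)\, |{\rm d}x|$, so the defect $T(R(z)) - T(R(-z))$ equals the integral of $R(x)$ against $w(x) - w(-x)$.

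To finish I invoke the injectivity half of Proposition~\ref{PropTracesForOpenStrip}: the map $w \mapsto T$ is one-to-one on the family of weight functions under consideration, so the vanishing of this integral for all polynomials $R$ forces $w(x) = w(-x)$ identically. The only technical point requiring care is checking that $w(-x)$ itself belongs to the same family, so that the subtraction and the appeal to injectivity are legitimate. This uses the even-quantization hypothesis $P(-x) = (-1)^n P(x)$, which makes the root set of $P$ invariant under $\alpha \mapsto -\alpha$ and produces a palindromic identity $X^n \bP(X^{-1}) = \pm \bP(X)$; a short rewriting then expresses $w(-x)$ in the standard form ${\rm e}^{2\pi {\rm i} c x}\tilde{G}({\rm e}^{2\pi {\rm i} x})/\bP({\rm e}^{2\pi {\rm i} x})$ for a polynomial $\tilde G$ of the same degree bound and, when $c=0$, satisfying $\tilde G(0) = 0$. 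This bookkeeping with $\bP$ is the only (minor) obstacle; everything else follows immediately from the results already established in the excerpt.
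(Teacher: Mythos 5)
Your argument is correct and takes essentially the same route as the paper: both reduce $\sigma$-invariance to the identity $T(R(z))=T(R(-z))$, substitute $x\mapsto -x$ in the integral, and conclude $w(x)=w(-x)$ from the injectivity of the map $w\mapsto T$ (your palindromy check $X^n\bP\big(X^{-1}\big)=\bP(X)$, needed to see that $w(-x)$ lies in the same family of weight functions, is exactly the detail the paper leaves implicit). One small slip: since $\sigma$ is an antiautomorphism with $\sigma(z)=-z$, it \emph{preserves} each weight space $\mc A_j$ rather than swapping $\mc A_j$ with $\mc A_{-j}$, but either way $T$ vanishes on the image for $j\neq 0$, so your conclusion that $\sigma$-invariance is automatic off weight zero stands.
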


\subsection{Relation to orthogonal polynomials}\label{ortpol}

We continue to assume that all roots of $\mathbf{P}$ are in the strip $|{\rm Re}\alpha|<\frac{1}{2}$. Assume that the trace $T$ is nondegenerate, i.e., the form $(a,b)\mapsto T(ab)$ defines an inner product on~$\mc A$ nondegenerate on~each filtration piece.
This holds for generic parameters, e.g., specifically if $w(x)$ is nonnegative on~${\rm i}\mathbb R$. Let~$\phi\colon A\to \mc A$ be the quantization map defined by $T$
(see~\cite[Section~3]{ES}). Namely, the form $(a,b)$ allows us to split the filtration, and $\phi$ is precisely the splitting map. Thus, $\phi(z^k)=p_k(z)$, where
$p_k$ are monic orthogonal polynomials
for the inner product
\[
(f_1,f_2)_*:=\int_{{\rm i}\mathbb R}f_1(x)f_2(x)w(x)|{\rm d}x|.
\]
Recall~\cite{Sz} that these polynomials satisfy a 3-term recurrence
\[
p_{k+1}(x)=(x-b_k)p_k(x)-a_kp_{k-1}(x),
\]
for some numbers $a_k$, $b_k$, i.e.,
\[
xp_{k}(x)=p_{k+1}(x)+b_kp_k(x)+a_kp_{k-1}(x).
\]
Thus the corresponding short star-product $z*z^k$
has the form
\begin{gather*}
z*z^k=\phi^{-1}\big(\phi(z)\phi\big(z^k\big)\big)=\phi^{-1}(zp_k(z))=
\phi^{-1}(p_{k+1}(z)+b_kp_k(z)+a_kp_{k-1}(z))
\\ \hphantom{z*z^k}
{}=z^{k+1}+b_kz^k+a_kz^{k-1}.
\end{gather*}
Thus the numbers $a_k$, $b_k$ are the matrix elements of multiplication by $z$ in weight $0$ for the short star-product attached to $T$. More general matrix elements of multiplication by $u$, $v$, $z$ for this short star-product are computed similarly. In other words, to compute the short star-product attached to $T$, we need to compute explicitly the coefficients $a_k$, $b_k$ and their generalizations. This problem
is addressed in Section~\ref{expcom}.

It is more customary to consider orthogonal polynomials on the real (rather than imaginary) axis, so let us make a change of variable $x=-{\rm i}y$.
Then we see that the monic polynomials $P_k(y):={\rm i}^{k}p_k(-{\rm i}y)$
are orthogonal under the inner product
\[
(f_1,f_2):=\int_{-\infty}^\infty f_1(y)f_2(y){\rm w}(y){\rm d}y,
\]
where ${\rm w}(y):=w(-{\rm i}y)$. Then the 3-term recurrence looks like
\[
P_{k+1}(y)=(y-{\rm i}b_k)P_k(y)+a_kP_{k-1}(y)
\]
(so for real parameters we'll have $a_k\in \mathbb R$, $b_k\in {\rm i}\mathbb R$).

\begin{Example}\label{n=1}
Let $n=1$, $P(x)=x$, so $\mathbf{P}(X)=X+1$.
Then a nonzero twisted trace exists if and only if $c\ne 0$, in which case it is unique up to scaling, and the corresponding weight func\-tion~is
\[
w(x)=\frac{{\rm e}^{2\pi {\rm i}cx}}{{\rm e}^{2\pi {\rm i}x}+1}
=\frac{{\rm e}^{2\pi {\rm i}\left(c-\frac{1}{2}\right)x}}{2\cos \pi x},\qquad
{\rm w}(y)=\frac{{\rm e}^{2\pi\left(c-\frac{1}{2}\right)y}}{2\cosh \pi y}.
\]
The corresponding orthogonal polynomials $P_k(y)$ are the (monic) {\it Meixner--Pollaczek polynomials} with parameters $\lambda=\frac{1}{2}$, $\phi=\pi c$~\cite[Section~1.7]{KS}.
\end{Example}

\begin{Example}\label{n=2} Let $n=2$, $P(x)=x^2+\beta^2$, so
\[
\mathbf{P}(X)=\big(X+{\rm e}^{2\pi \beta}\big)\big(X+{\rm e}^{-2\pi \beta}\big).
\]
The space of twisted traces is $1$-dimensional if $c=0$ and $2$-dimensional if $c\ne 0$. So~for $c\ne 0$ the traces up to scaling
are defined by the weight function
\[
w(x)=\frac{{\rm e}^{2\pi {\rm i}\left(c-\frac{1}{2}\right)x} \cos\pi (x-{\rm i}\alpha)}{2\cos \pi(x-{\rm i}\beta )\cos \pi(x+{\rm i}\beta )},\qquad {\rm w}(y)=\frac{{\rm e}^{2\pi\left(c-\frac{1}{2}\right)y} \cosh\pi (y-\alpha)}{2\cosh \pi(y-\beta)\cosh \pi(y+\beta)},
\]
and the limiting cases $\alpha\to \pm \infty$ along the real axis, which yield
\[
w(x)=\frac{{\rm e}^{2\pi {\rm i}\left(c-\frac{1}{2}\pm \frac{1}{2}\right)x}}{4\cos \pi(x-{\rm i}\beta )\cos \pi(x+{\rm i}\beta )},\qquad {\rm w}(y)=\frac{{\rm e}^{2\pi\left(c-\frac{1}{2}\pm \frac{1}{2}\right)y}}{4\cosh \pi(y-\beta)\cosh \pi(y+\beta)}.
\]

These formulas for the plus sign also define the unique up to scaling trace for $c=0$; i.e., \[
w(x)=\frac{1}{4\cos\pi(x-{\rm i}\beta )\cos\pi(x+{\rm i}\beta )},\qquad {\rm w}(y)=\frac{1}{4\cosh\pi(y-\beta)\cosh\pi(y+\beta)}.
\] In this case, the corresponding orthogonal polynomials $P_k(y)$ are the {\it continuous Hahn polynomials} with parameters $\tfrac{1}{2}+{\rm i}\beta ,\tfrac{1}{2}-{\rm i}\beta ,\tfrac{1}{2}-{\rm i}\beta ,\tfrac{1}{2}+{\rm i}\beta$~\cite[Section~1.4]{KS}.

Also for $c=\frac{1}{2}$, $\alpha=0$ we have
\[
w(x)=\frac{\cos \pi x}{2\cos\pi(x+{\rm i}\beta )\cosh\pi(x-{\rm i}\beta )},\qquad {\rm w}(y)=\frac{\cosh \pi y}{2\cosh\pi(y+\beta)\cosh\pi(y-\beta)},
\]
so $P_k(y)$ are the {\it continuous dual Hahn polynomials} with $a=0$, $b=\frac{1}{2}-{\rm i}\beta $, $c=\frac{1}{2}+{\rm i}\beta$ \cite[Section~1.3]{KS}.
\end{Example}

\begin{Remark} In Example~\ref{n=1} ($n=1$),
the only even short star-product corresponds
to ${\rm w}(y)=\frac{1}{2\cosh \pi y}$. This is the Moyal--Weyl star-product.
In Example~\ref{n=2} ($n=2$), the only even short star-product corresponds
to ${\rm w}(y)=\frac{1}{4\cosh \pi (y-\beta)\cosh \pi(y+\beta)}$. This is the
unique ${\rm SL}_2$-invariant star-product.
\end{Remark}

\begin{Example}\label{betai} Let $t=(-1)^n$, $G(X)=X^{[n/2]}$. Then
\[
w(x)=\prod_{j=1}^n\frac{1}{2\cos \pi(x-{\rm i}\beta _j)},\qquad
{\rm w}(y)=\prod_{j=1}^n\frac{1}{2\cosh \pi(y+\beta_j)},
\]
which defines an $s$-twisted trace.
The corresponding orthogonal polynomials are semiclassical but not hypergeometric for $n\ge 3$.
\end{Example}

\begin{Remark} The trace of Example~\ref{betai} corresponds to the short star-product arising in the 3-d SCFT, as shown in~\cite[Section~8.1.2]{DPY}. There the Kleinian singularity of type $A_{n-1}$ appears as the Higgs branch, and the parameters $\beta_j$ are the FI parameters. The same trace also shows up in~\cite[equation~(5.27)]{DFPY}, where
the Kleinian singularity appears as the Coulomb branch, and the parameters $\beta_j$ are the mass parameters.\footnote{We thank Mykola Dedushenko for this explanation.}
\end{Remark}

\subsection{Conjugation-equivariant traces}
Let now $\rho$ be a conjugation on~$\mc A$ (Section~\ref{conju}). Let~us determine which $g_t$-twisted traces are $\rho$-equivariant (see~\cite[Section~3.6]{ES}. A trace $T$ is $\rho$-equivariant if $\ovl{T(R(z))}=T\big(\ovl{R}(-z)\big)$, which is~equivalent to $T$ being real on $\RN[{\rm i}z]$. This happens if and only if $w(x)$ is real on ${\rm i}\RN$. Since $w$ is meromorphic this means that $w(x)=\ovl{w(-\ovl{x})}$.

So we have the following proposition.

\begin{Proposition}
\label{PropQuaternionicTraces}
Suppose that $\mc{A}$ is a quantization of $A$ with conjugation $\rho$. Then $\rho$-equivariant $g_t$-twisted traces $T$ on~$\mc A$ are
given by
\[
T(R(z))=\int_{{\rm i}\RN} R(x)w(x)|{\rm d}x|,
\]
where $w$ is as in Proposition~$\ref{PropTracesForOpenStrip}$ and
\[w(x)=\ovl{w(-\ovl{x})}=(-1)^n w(x+1).
\]
Moreover, if $\mc A$ is even then $\sigma$-invariant traces among them
correspond to the functions $w$ with $w(x)=w(-x)$.
\end{Proposition}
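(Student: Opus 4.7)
The plan is to derive the statement directly from Proposition~\ref{PropTracesForOpenStrip} by converting $\rho$-equivariance into an analytic symmetry of the weight $w$. Since Proposition~\ref{PropTracesForOpenStrip} already parametrizes $g_t$-twisted traces by integral formulas against weights of a prescribed shape (automatically satisfying $w(x+1)=tw(x)$), only the $\rho$-equivariance condition remains to be translated. I would start by using $\rho(z)=-z$ and the antilinearity of $\rho$ to compute $\rho(R(z))=\overline R(-z)$ for $R\in\CN[z]$, so that $\rho$-equivariance of $T$ reads $\overline{T(R(z))}=T(\overline R(-z))$ for all polynomials $R$.

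The next step is to plug in the integral formula and exploit the reflection symmetry of ${\rm i}\RN$: on the imaginary axis one has $\overline x=-x$ and $|{\rm d}x|$ is invariant under $x\mapsto -x$, so both sides of the equivariance identity reduce to integrals of $\overline R(-x)$ against $\overline{w(x)}$ and $w(x)$ respectively. Equating them for every $R$ forces $w$ to be real on ${\rm i}\RN$; since $w$ is meromorphic, Schwarz reflection across the imaginary axis promotes this to the global identity $w(x)=\overline{w(-\overline x)}$. Combined with the quasiperiodicity already built into $w$, this recovers both stated symmetries.

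For the $\sigma$-invariance addendum in the even case, I would repeat the analysis with $\sigma$ in place of $\rho$: since $\sigma$ is an antiautomorphism fixing the polynomial subring in $z$ up to $z\mapsto -z$, one has $\sigma(R(z))=R(-z)$, and the same substitution $x\mapsto -x$ on the integration contour converts $\sigma$-invariance into $w(x)=w(-x)$. The only mildly delicate point is justifying that vanishing of the integral against every polynomial implies a pointwise identity of meromorphic functions; this is routine given the exponential decay of $w$ at $\pm{\rm i}\infty$ established in the proof of Proposition~\ref{PropTracesForOpenStrip}, so I do not expect any substantial obstacle.
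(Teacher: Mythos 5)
Your proposal is correct and takes essentially the same route as the paper: translate $\rho$-equivariance into $\ovl{T(R(z))}=T\big(\ovl{R}(-z)\big)$, push this through the integral formula of Proposition~\ref{PropTracesForOpenStrip} using $\ovl{x}=-x$ on ${\rm i}\RN$ to conclude that $w$ is real on the imaginary axis, and then upgrade to $w(x)=\ovl{w(-\ovl{x})}$ by meromorphic continuation, with the $\sigma$-invariance clause handled by the same substitution $x\mapsto -x$. The injectivity point you flag at the end is exactly what the paper relies on (implicitly, via the injectivity of $G\mapsto T$ noted in the proof of Proposition~\ref{PropTracesForOpenStrip}), so there is no gap.
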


\subsection[Construction of traces when all roots of P(x) satisfy |Re alpha| < 1/2]
{Construction of traces when all roots of $\boldsymbol{P(x)}$ satisfy $\boldsymbol{|\Re\alpha|\leq \frac{1}{2}}$}

From now on we suppose that ${\rm i}^nP(x)$ is real on ${\rm i}\RN$ (so that the conjugations $\rho_\pm$ are well defined). In particular, the roots of $P(x)$ are symmetric with respect to ${\rm i}\RN$.

Suppose that for all roots $\alpha$ of $P(x)$ we have $|\Re\alpha|\leq \frac{1}{2}$, and let us give a formula for twisted traces in this case.
 There are unique monic polynomials $P_*(x)$, $Q(x)$ such that $P(x)=P_*(x)Q\big(x+\frac{1}{2}\big)Q\big(x-\tfrac{1}{2}\big)$, all roots of $P_*(x)$ belong to the strip $|\Re x|<\frac{1}{2}$ and all roots of $Q(x)$ belong to ${\rm i}\RN$. Suppose that $\alpha_1,\ldots,\alpha_k$ are the roots of $P_*(x)$ and $\alpha_{k+1},\ldots,\alpha_m$ are the roots of~$Q\big(x+\frac{1}{2}\big)$. Note that $\deg Q=n-m$. Let~$\bP_*(X)=\prod_{j=1}^m(X+{\rm e}^{2\pi {\rm i}\alpha_j})$, $w(x)={\rm e}^{2\pi {\rm i}cx}\frac{G({\rm e}^{2\pi {\rm i} x})}{\bP_*({\rm e}^{2\pi {\rm i} x})}$, where $G(X)$ is a polynomial of degree at most $m-1$ and $G(0)=0$ when $t=1$. We~have
\begin{enumerate}\itemsep=0pt
\item[(1)]
$w(x+1)=tw(x)$;
\item[(2)]
$w(x)Q(x)$ is bounded on ${\rm i}\RN$ and decays exponentially and uniformly when $\Im x$ tends to~$\pm \infty$;
\item[(3)]
$w\big(x+\frac{1}{2}\big)P(x)$ is holomorphic on $|\Re x|\leq \frac{1}{2}$.
\end{enumerate}
For any $R\in \CN[x]$ let $R(x)=R_1(x)Q(x)+R_0(x)$, where $\deg R_0<\deg Q$.
\begin{Proposition}
\label{PropTracesForClosedStrip}
A general $g_t$-twisted trace on~$\mc A$
has the form
\[T(R(z))=\int_{{\rm i}\RN}R_1(x)Q(x)w(x)|{\rm d}x|+\phi(R_0),\] where $w(x)$ is as above and $\phi$ is any linear functional.
\end{Proposition}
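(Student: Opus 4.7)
The plan is to extend the argument of Proposition \ref{PropTracesForOpenStrip} by absorbing the ``boundary'' roots of $P$ (those with $|\Re \alpha| = \frac{1}{2}$) into the polynomial $Q$, while the remaining roots are handled by the weight function $w$ as before. The finite-dimensional slack that arises from $Q$ is accounted for by the arbitrary linear functional $\phi$ on the $(\deg Q)$-dimensional space of remainders $R_0$.

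First I would verify that the formula defines a $g_t$-twisted trace. Convergence of $\int_{{\rm i}\RN} R_1(x) Q(x) w(x) |{\rm d}x|$ is immediate from property (2). By Proposition~\ref{natiso} (and the proposition preceding it), it suffices to check that
\[
T\bigl(t S\bigl(z+\tfrac{1}{2}\bigr) P\bigl(z+\tfrac{1}{2}\bigr) - S\bigl(z-\tfrac{1}{2}\bigr) P\bigl(z-\tfrac{1}{2}\bigr)\bigr) = 0
\]
for all $S \in \CN[x]$. The key observation is that $P\bigl(z+\tfrac{1}{2}\bigr) = P_*\bigl(z+\tfrac{1}{2}\bigr) Q(z+1) Q(z)$ and $P\bigl(z-\tfrac{1}{2}\bigr) = P_*\bigl(z-\tfrac{1}{2}\bigr) Q(z) Q(z-1)$, so both terms are divisible by $Q(z)$. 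Consequently $R_0 = 0$ and $\phi$ plays no role; the identity reduces to the contour-shifting computation of Proposition~\ref{PropTracesForOpenStrip}. Property (1) allows one to translate the contours by $\pm \tfrac{1}{2}$, property (3) guarantees holomorphy of $S(x) P(x) w\bigl(x+\tfrac{1}{2}\bigr)$ on the strip $|\Re x| \le \tfrac{1}{2}$, and exponential decay at $\pm {\rm i}\infty$ follows from rewriting $P(x) w\bigl(x+\tfrac{1}{2}\bigr) = P_*(x) Q\bigl(x-\tfrac{1}{2}\bigr) \cdot \bigl[Q\bigl(x+\tfrac{1}{2}\bigr) w\bigl(x+\tfrac{1}{2}\bigr)\bigr]$, which combines polynomial growth with an exponentially decaying factor by property (2). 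Cauchy's theorem then yields vanishing.

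Next I would show that every $g_t$-twisted trace arises this way by combining a dimension count with injectivity. The parameter space of pairs $(G, \phi)$ has dimension $m + (n-m) = n$ when $t \ne 1$ and $(m-1) + (n-m) = n-1$ when $t = 1$, matching the dimension of the space of $g_t$-twisted traces given by Corollary~\ref{natiso}. For injectivity, suppose $T \equiv 0$. Plugging in $R = R_0$ ranging over all polynomials of degree $< \deg Q$ (so $R_1 = 0$) forces $\phi \equiv 0$. One is left with $\int_{{\rm i}\RN} R_1(x) Q(x) w(x) |{\rm d}x| = 0$ for every polynomial $R_1$, i.e., all moments of the function $f(y) := Q(-{\rm i}y) w(-{\rm i}y)$ vanish. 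Since $f$ decays exponentially, its Fourier transform extends holomorphically into a horizontal strip; vanishing of all moments of $f$ means all Taylor coefficients of $\hat f$ at the origin vanish, so $\hat f \equiv 0$ by analytic continuation, whence $f \equiv 0$ and therefore $G \equiv 0$.

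The main obstacle will be the injectivity step: because $w$ itself does not decay on ${\rm i}\RN$ when boundary roots are present, one cannot invoke a determinate-moment-problem argument for $w$ directly. The trick is to produce the exponentially decaying auxiliary function $Q w$ using property (2) and then exploit its Paley--Wiener-style analyticity; the rest of the proof then proceeds by routine bookkeeping with the polynomial division $R = R_1 Q + R_0$.
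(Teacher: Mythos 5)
Your proof is correct and follows essentially the same route as the paper: verify that each pair $(G,\phi)$ yields a twisted trace via divisibility of $S\big(x-\tfrac{1}{2}\big)P\big(x-\tfrac{1}{2}\big)-tS\big(x+\tfrac{1}{2}\big)P\big(x+\tfrac{1}{2}\big)$ by $Q(x)$ together with the contour shift of Proposition~\ref{PropTracesForOpenStrip}, then conclude by comparing dimensions with Corollary~\ref{natiso}. The only difference is that you spell out the injectivity of the parametrization $(G,\phi)\mapsto T$ (via the vanishing-moments/Fourier argument applied to $Qw$), a step the paper leaves implicit in its dimension count.
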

\begin{proof}
The space of polynomials $G$ has dimension $m-\delta_c^0$, while the space of linear functionals~$\phi$ has dimension $\deg Q=n-m$. So~the space of such linear functionals $T$ has dimension $n-\delta_c^0$. The space of all $g_t$-twisted traces has the same dimension, so it is enough to prove that all linear functionals $T$ of this form are $g_t$-twisted traces. In other words, we should prove that $T\big(S\big(z-\frac{1}{2}\big)P\big(z-\frac{1}{2}\big)-tS\big(z+\frac{1}{2}\big)P\big(z+\frac{1}{2}\big)\big)=0$ for all $S\in\CN[x]$.

We see that $S\big(x-\tfrac{1}{2}\big)P\big(x-\tfrac{1}{2}\big)-tS\big(x+\frac{1}{2}\big)P\big(x+\frac{1}{2}\big)$ is divisible by $Q(x)$, so
\begin{gather*}
T\big(S\big(z-\tfrac{1}{2}\big)P\big(z-\tfrac{1}{2}\big)-tS\big(z+\tfrac{1}{2}\big)
P\big(z+\tfrac{1}{2}\big)\big)
\\ \qquad
{}= \int_{{\rm i}\RN}\big(S\big(x-\tfrac{1}{2}\big)P\big(x-\tfrac{1}{2}\big)-tS\big(x+\tfrac{1}{2}\big)
P\big(x+\tfrac{1}{2}\big)\big)w(x)|{\rm d}x|.
\end{gather*}
Since $w\big(x+\frac{1}{2}\big)P(x)$ is holomorphic on $|\Re x|\leq \frac{1}{2}$, we deduce that this integral is zero similarly to the proof of Proposition~\ref{PropTracesForOpenStrip}
\end{proof}

\subsection{Twisted traces in the general case}
\label{SubSubSecGeneralTraces}
Let $m(\alpha)$ be the multiplicity of $\alpha$ as a root of $P(x)$. Any linear functional $\phi$ on the space~$\CN[x]/$ $P(x)\CN[x]$ can be written as $\phi(S)=\suml\nolimits_{\alpha,\,0\le i<m(\alpha)}C_{\alpha i} S^{(i)}(\alpha)$, where $C_{\alpha i}\in\CN$. Therefore any $g_t$-twis\-ted trace $T$ is given by
\[
T\big(S\big(z-\tfrac{1}{2}\big)-tS\big(z+\tfrac{1}{2}\big)\big)
=\suml_{\alpha,\,0\le i<m(\alpha)}C_{\alpha i} S^{(i)}(\alpha).
\]

Let $\widetilde{P}(x)$ be the following polynomial: all roots of $\widetilde{P}(x)$ belong to the strip $|\Re x|\leq \frac{1}{2}$ and the multiplicity of a root $\alpha$ equals to
\begin{itemize}
\item
$\sum_{k\in \ZN} m(\alpha+k)$ if $|\Re\alpha|<\frac{1}{2}$;
\item
$\sum_{k\geq 0} m(\alpha+k)$ if $\Re\alpha=\frac{1}{2}$;
\item
$\sum_{k\leq 0} m(\alpha+k)$ if $\Re\alpha=-\frac{1}{2}$.
\end{itemize}
So $\widetilde{P}(x)$ has the same degree as $P(x)$ and its roots are obtained from roots of $P(x)$ by the minimal integer shift into the strip $|\Re x|\leq \frac{1}{2}$. In particular, the roots of $\widetilde{P}(x)$ are symmetric with respect to ${\rm i}\RN$.

Suppose that $\alpha\in \CN$ has real part bigger than $\frac{1}{2}$, $S(x)$ is an arbitrary polynomial, $R(x)=S\big(x-\tfrac{1}{2}\big)-tS\big(x+\frac{1}{2}\big)$, $i\geq 0$. Let~$r$ be the smallest positive integer such that ${\rm Re}(\alpha)-r\leq \frac 12$. Then
\begin{gather*}
S^{(i)}(\alpha)=\sum_{k=0}^{r-1}\big(t^{-k}S^{(i)}(\alpha-k)
-t^{-k-1}S^{(i)}(\alpha-k-1)\big)+t^{-r}S^{(i)}(\alpha-r)
\\ \hphantom{S^{(i)}(\alpha)}
{}=\sum_{k=0}^{r-1}t^{-k-1}R^{(i)}\big(\alpha-k-\tfrac{1}{2}\big)+t^{-r}S^{(i)}(\alpha-r)
=\phi_{i,\alpha}(R)+t^{-r}S^{(i)}(\alpha-r),
\end{gather*}
where
\[
\phi_{i,\alpha}(R):=\sum_{k=0}^{r-1}t^{-k-1}R^{(i)}\big(\alpha-k-\tfrac{1}{2}\big).
\]
We can write a similar equation for $\alpha\in \CN$ with real part smaller than $-\frac{1}{2}$.

Therefore
\begin{gather*}
T\big(S\big(z-\tfrac{1}{2}\big)-tS\big(z+\tfrac{1}{2}\big)\big)=\!\!\!\sum_{\alpha,\,0\le i<m(\alpha)}\!\!\!\!\!C_{\alpha i} S^{(i)}(\alpha)
\\ \hphantom{T\big(S\big(z-\tfrac{1}{2}\big)-tS\big(z+\tfrac{1}{2}\big)\big)}
{}=\!\!\!\sum_{\alpha,\,0\le i<m(\alpha)}\!\!\!\!\!C_{\alpha i} \phi_{i,\alpha}(R)+t^{-r(\alpha)}C_{\alpha i} S^{(i)}(\alpha-r)=\Phi(R)+\widetilde{T}(R(z)),
\end{gather*}
where $\Phi(R):=\suml\nolimits_{\Re a\neq 0,k\ge 0}c_{ak} R^{(k)}(a)$, $c_{ak}\in \CN$, $\widetilde{T}$ is a $g_t$-twisted trace for the quantization defined by the polynomial $\widetilde{P}(x)$. Below we will abbreviate this sentence to ``$\widetilde{T}$ is a trace for $\widetilde{P}$''.

Let $P_\circ$ be the following polynomial: all the roots of $P_\circ$ belong to strip $|\Re x|\leq \frac{1}{2}$ and the multiplicity of $\alpha$, $|\Re\alpha|\leq \frac{1}{2}$ in $P_\circ$ equals the multiplicity of $\alpha$ in $P$.

Since $\phi_{i,\alpha}$ are linearly independent for different $i,\alpha$, we deduce that $\Phi=0$ if and only if $T$ is a trace for $P_\circ$.

So we have proved the following proposition:
\begin{Proposition}
\label{PropGeneralTrace}
Suppose that $P$ is any polynomial, $\widetilde{P}$ is obtained from $P$ by the minimal integer shift of roots into the strip $|\Re x|\leq \frac{1}{2}$, and $P_\circ$ is obtained from $P$ by throwing out roots not in the strip $|\Re x|\leq \frac{1}{2}$. Then any twisted trace $T$ on $\mathcal A_P$ can be represented as $T=\Phi+\widetilde{T}$, where
\[
\Phi(R)=\sum_{a\notin {\rm i}\RN,\,k\ge 0} c_{a k} R^{(k)}(a),
\]
and $\widetilde{T}$ is a trace for $\widetilde{P}$. Furthermore, if $\Phi=0$ then $T$ is a trace for $P_\circ$.
\end{Proposition}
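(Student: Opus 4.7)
The plan is to organize the computation already carried out in Section~\ref{SubSubSecGeneralTraces} into a clean argument. By Corollary~\ref{natiso}, a $g_t$-twisted trace $T$ on~$\mathcal{A}_P$ is determined by a linear functional on the quotient $\CN[x]/P(x)\CN[x]$, and after expansion in the derivative basis at the roots of $P$ this reads
\[
T\big(S\big(z-\tfrac{1}{2}\big)-tS\big(z+\tfrac{1}{2}\big)\big)=\suml_{\alpha,\,0\le i<m(\alpha)} C_{\alpha i} S^{(i)}(\alpha).
\]
The goal is to rewrite this sum so that every evaluation point lies in the closed strip $|\Re x|\le \tfrac{1}{2}$, at the cost of introducing a correction $\Phi(R)$ supported off $\mathrm{i}\RN$.

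To that end, for each root $\alpha$ with $\Re\alpha>\tfrac{1}{2}$ (the case $\Re\alpha<-\tfrac{1}{2}$ is symmetric) I would apply the telescoping identity
\[
S^{(i)}(\alpha) = \phi_{i,\alpha}(R) + t^{-r}S^{(i)}(\alpha - r),
\]
already derived in Section~\ref{SubSubSecGeneralTraces}, where $r=r(\alpha)$ is the minimal integer shift carrying $\alpha$ into the strip and $\phi_{i,\alpha}(R)$ is a sum of $i$-th derivatives of $R$ at the intermediate points $\alpha - k - \tfrac{1}{2}$, all of which have nonzero real part. Substituting these identities termwise into the expression for $T$ yields a decomposition $T=\Phi+\widetilde{T}$, where the $\phi_{i,\alpha}$-pieces collect into the expression $\Phi(R)=\suml_{a\notin \mathrm{i}\RN,\,k\ge 0}c_{ak}R^{(k)}(a)$, and the residual functional $\suml t^{-r(\alpha)}C_{\alpha i}\,\mathrm{ev}^{(i)}_{\alpha-r(\alpha)}$, once terms are grouped by their in-strip support point, descends to a linear functional on $\CN[x]/\widetilde{P}(x)\CN[x]$ and hence, again by Corollary~\ref{natiso}, defines a $g_t$-twisted trace $\widetilde{T}$ on~$\mathcal{A}_{\widetilde{P}}$.

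For the final assertion, I would invoke linear independence: the evaluation functionals $R\mapsto R^{(k)}(a)$ at the distinct intermediate points $a=\alpha-k-\tfrac{1}{2}\notin \mathrm{i}\RN$ are linearly independent, and, being supported off~$\mathrm{i}\RN$, are also independent of the in-strip evaluation functionals appearing in $\widetilde T$. Consequently $\Phi=0$ forces every coefficient $C_{\alpha i}$ with $|\Re\alpha|>\tfrac{1}{2}$ to vanish, which is precisely the condition that the supporting functional of $T$ descends to $\CN[x]/P_\circ(x)\CN[x]$, i.e., $T$ is a trace for $P_\circ$.

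The main bookkeeping obstacle will be the multiplicity accounting: when several roots of $P$ are congruent modulo $\ZN$, they shift to a common root of $\widetilde P$, and one must verify that the combined residual contributions produce exactly the multiplicity $\sum_{k\in\ZN} m(\alpha+k)$ prescribed in the definition of $\widetilde P$ (together with the one-sided variants for roots landing on the strip boundary $\Re\alpha=\pm\tfrac{1}{2}$). Likewise, the intermediate shift points $\alpha-k-\tfrac{1}{2}$ coming from different roots may collide, in which case one groups terms by the pair $(a,k)$ to recover the single sum defining $\Phi$. Once these bookkeeping items are checked, the rest of the argument is the reorganization outlined above.
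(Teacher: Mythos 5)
Your proposal is correct and follows essentially the same route as the paper: the paper's own proof is precisely the computation in Section~\ref{SubSubSecGeneralTraces}, namely expanding the supporting functional at the roots of $P$, shifting each out-of-strip root into the strip via the telescoping identity $S^{(i)}(\alpha)=\phi_{i,\alpha}(R)+t^{-r}S^{(i)}(\alpha-r)$, and invoking linear independence of the $\phi_{i,\alpha}$ for the final claim. The bookkeeping issues you flag (multiplicity accounting for $\widetilde{P}$ and collisions of intermediate points) are genuine but routine, and the paper treats them at the same level of detail.
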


\begin{Remark} We may think about Proposition~\ref{PropGeneralTrace} as follows.
When the roots of $P$ lie inside the strip $|\Re x|<\frac{1}{2}$, the trace of $R(z)$ is given by
the integral of $R$ against the weight function~$w$ along the imaginary axis.
However, when we vary~$P$, as soon as its roots leave the strip $|\Re x|<\frac{1}{2}$,
poles of~$w$ start crossing the contour of integration. So~for the formula to remain valid,
we need to add the residues resulting from this. These residues give rise to the linear functional $\Phi$.
\end{Remark}

\section{Positivity of twisted traces}\label{sec4}

\subsection{Analytic lemmas}
We will use the following classical result:
\begin{Lemma}
\label{LemClassicalDense}
Suppose that $w(x)\ge 0$ is a measurable function on the real line such that $w(x)<c {\rm e}^{-b|x|}$ for some $c,b>0$. We~also assume that $w>0$ almost everywhere. Then polynomials are dense in the space $L^p(\RN,w(x){\rm d}x)$ for all $1\leq p<\infty$.
\end{Lemma}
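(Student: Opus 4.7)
The plan is to use a standard duality argument. By Hahn--Banach, polynomials fail to be dense in $L^p(\RN, w(x){\rm d}x)$ if and only if there exists a nonzero continuous linear functional on this space vanishing on all of them. For $1\le p<\infty$, the dual of $L^p(\RN, w(x){\rm d}x)$ is $L^q(\RN, w(x){\rm d}x)$ with $1/p+1/q=1$ (taking $q=\infty$ when $p=1$), under the pairing $\langle f,g\rangle=\intl_\RN f(x)g(x)w(x){\rm d}x$. Thus it suffices to show that if $g\in L^q(\RN, w(x){\rm d}x)$ satisfies $\intl_\RN x^n g(x)w(x){\rm d}x=0$ for every $n\ge 0$, then $g=0$ almost everywhere. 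That polynomials themselves lie in every $L^p(\RN, w(x){\rm d}x)$ is immediate from the bound $w(x)<c{\rm e}^{-b|x|}$, which makes $|x|^N w(x)$ integrable for every $N$.

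The key object is the Fourier--Laplace transform
\[
F(z):=\intl_\RN {\rm e}^{{\rm i}zx}g(x)w(x){\rm d}x.
\]
I would first apply H\"older's inequality (splitting $w=w^{1/q}\cdot w^{1/p}$) to obtain, when $1<p<\infty$,
\[
\intl_\RN {\rm e}^{|\Im z||x|}|g(x)|w(x){\rm d}x\le \left(\intl_\RN |g|^q w\,{\rm d}x\right)^{1/q}\left(\intl_\RN {\rm e}^{p|\Im z||x|}w(x){\rm d}x\right)^{1/p}.
\]
Since $w(x)<c{\rm e}^{-b|x|}$, the second factor on the right is finite whenever $p|\Im z|<b$, and hence $F$ is well defined and, by a routine dominated-convergence argument, holomorphic on the open strip $|\Im z|<b/p$. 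The case $p=1$ is handled similarly, using that $g\in L^\infty$ essentially bounded gives the larger strip $|\Im z|<b$.

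The vanishing-moment hypothesis translates into $F^{(n)}(0)=({\rm i})^n\intl_\RN x^n g w\,{\rm d}x=0$ for every $n\ge 0$, so $F\equiv 0$ throughout the strip of holomorphy. The restriction $F|_\RN$ is the Fourier transform of the $L^1$-function $gw$, and injectivity of the Fourier transform on $L^1(\RN)$ forces $gw=0$ almost everywhere; since $w>0$ almost everywhere, this gives $g=0$ almost everywhere, completing the proof. The only nontrivial ingredient is the holomorphy of $F$ in a strip, which reduces entirely to the H\"older estimate displayed above; everything else is formal.
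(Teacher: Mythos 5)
Your proof is correct and follows essentially the same route as the paper's: reduce by $L^p$--$L^q$ duality to showing that a functional with vanishing moments is zero, then observe that the Fourier transform of $gw$ extends holomorphically to a strip (the paper gets this from ${\rm e}^{a|x|}\in L^p(\RN,w)$ for $a<1/p$ after normalizing $b=1$, which is the same H\"older estimate you write out) and vanishes identically since all its derivatives at the origin vanish. No substantive differences.
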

\begin{proof}
Changing $x$ to $bx$ we can asssume that $b=1$.

Fix $p$. Let~$\frac{1}{p}+\frac{1}{q}=1$. Since $L^p(\RN,w)^*=L^q(\RN,w)$, it suffices to show that any function $f\in L^q(\RN,w)$ such that $\int_{\RN} f(x)x^n w(x){\rm d}x=0$ for all nonnegative integers $n$ must be zero.

Choose $0<a<\frac{1}{p}$. We~have ${\rm e}^{a|x|}\in L^p(\RN,w)$. Therefore $f(x){\rm e}^{a|x|}w(x)\in L^1(\RN)$. Denote $f(x)w(x)$ by $F(x)$. Let~$\widehat{F}$ be the Fourier transform of $F$. Since $F(x){\rm e}^{a|x|}\in L^1(\RN)$, $\widehat{F}$ extends to a holomorphic function in the strip $|\Im x|<a$.

Since $\int_{\RN}f(x)x^n w(x){\rm d}x=0$, we have $\int_{\RN}F(x)x^n{\rm d}x=0$, so $\widehat{F}^{(n)}(0)=0$. Since $\widehat{F}$ is a holomorphic function and all derivatives of $\widehat{F}$ at the origin are zero, we deduce that $\widehat{F}=0$. Therefore $F=0$, so $f=0$ almost everywhere, as desired.
\end{proof}
We get the following corollaries:
\begin{Lemma}\label{CorClassical}
Let $w$ satisfy the assumptions of Lemma~$\ref{LemClassicalDense}$.
\begin{enumerate}\itemsep=0pt
\item[$1.$]
Suppose that $H(x)$ is a continuous complex-valued function on $\mathbb R$ with finitely many zeros and at most polynomial growth at infinity. Then the set $\{H(x)S(x)\,|\, S(x)\in\CN[x]\}$ is dense in the space $L^p(\RN,w)$.
\item[$2.$]
Suppose that $M(x)$ is a nonzero polynomial nonnegative on the real line. Then the closure of the set $\{M(x)S(x)\ovl{S}(x)\,|\, S(x)\in \CN[x]\}$ in $L^p(\RN,w)$ is the subset of almost everywhere nonnegative functions.
\end{enumerate}
\end{Lemma}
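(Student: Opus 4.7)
The plan is to adapt the duality argument from Lemma~\ref{LemClassicalDense}. Identifying $L^p(\RN,w)^* = L^q(\RN,w)$ with $\tfrac{1}{p}+\tfrac{1}{q}=1$, it suffices to show that any $g \in L^q(\RN,w)$ satisfying $\int_\RN g(x)H(x)x^n w(x)\,dx = 0$ for every $n \geq 0$ vanishes almost everywhere. Setting $F(x) := g(x)H(x)w(x)$, the orthogonality conditions become $\widehat{F}^{(n)}(0) = 0$ for all $n$, so it is enough to show that $\widehat{F}$ extends holomorphically to some strip $|\Im \xi| < a$: by analytic continuation $\widehat{F} \equiv 0$, hence $F = 0$ a.e., and since $w > 0$ a.e.\ and $H$ has only finitely many zeros, $g = 0$ a.e. To obtain this holomorphic extension I need $F(x) e^{a|x|} \in L^1(\RN)$ for some $a>0$; H\"older's inequality (splitting $w\, e^{a|x|} = w^{1/q}\cdot w^{1/p}e^{a|x|}$) yields
\[
\int_\RN |g(x)H(x)| w(x) e^{a|x|}\,dx \leq \|g\|_{L^q(\RN,w)} \cdot \Big(\int_\RN |H(x)|^p w(x) e^{pa|x|}\,dx\Big)^{1/p},
\]
and since $w(x) < c e^{-b|x|}$ and $|H|^p$ grows polynomially, any choice $0 < a < b/p$ makes the right-hand side finite. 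The Fourier-analytic step then proceeds exactly as in Lemma~\ref{LemClassicalDense}.

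\textbf{Part 2.} Since $M$ takes real nonnegative values on $\RN$, it has real coefficients; its real roots have even multiplicity and its non-real roots come in conjugate pairs, so $M = |A|^2$ on $\RN$ for some polynomial $A \in \CN[x]$. Then $M(x)S(x)\overline{S}(x) = |A(x)S(x)|^2 \geq 0$, so the closure of the given set lies in the cone of a.e.\ nonnegative functions. For the reverse inclusion, given $f \geq 0$ in $L^p(\RN,w)$, set $B(x) := A(x)\sqrt{f(x)}/|A(x)|$ when $A(x) \neq 0$ and $B(x) := 0$ otherwise. This $B$ is measurable, $|B|^2 = f$ a.e., and $\int_\RN |B|^{2p} w\,dx = \int_\RN f^p w\,dx < \infty$, so $B \in L^{2p}(\RN,w)$. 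Applying Part~1 with $H = A$ (a nonzero polynomial, hence continuous with finitely many zeros and polynomial growth) in $L^{2p}(\RN,w)$ yields polynomials $S_n$ with $A S_n \to B$ in $L^{2p}(\RN,w)$, and H\"older's inequality (with $1/p = 1/(2p) + 1/(2p)$) gives
\[
\bigl\| |A S_n|^2 - |B|^2 \bigr\|_{L^p(\RN,w)} \leq \|A S_n - B\|_{L^{2p}(\RN,w)} \bigl(\|A S_n\|_{L^{2p}(\RN,w)} + \|B\|_{L^{2p}(\RN,w)}\bigr) \to 0,
\]
so $M S_n \overline{S_n} = |A S_n|^2 \to f$ in $L^p(\RN,w)$, as required.

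\textbf{Main obstacles.} For Part~1, the only technical point is the H\"older estimate balancing the polynomial growth of $|H|^p$ against the exponential decay of $w$; this is routine. For Part~2, the essential structural input is the factorization $M = |A|^2$ over $\CN[x]$, which is forced by $M$ being real and nonnegative on $\RN$; the rest is a standard continuity argument for the squaring map from $L^{2p}$ to $L^p$. The potentially confusing point is that $f$ need not vanish at zeros of $M$ while every approximant $M S \overline{S}$ does, but this is harmless because $L^p$-convergence ignores behavior at finitely many points.
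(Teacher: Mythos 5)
Your proof is correct. For Part 1 you carry out the duality/Fourier argument directly, which is just an unrolled version of the paper's one-line reduction: the paper observes that $w|H|^p$ again satisfies the hypotheses of Lemma~\ref{LemClassicalDense} and that $g\mapsto gH$ is an isometry of $L^p\big(\RN,w|H|^p\big)$ onto $L^p(\RN,w)$, so density of polynomials in the former space transports to density of $\{HS\}$ in the latter; the analytic content (H\"older against the exponential decay, holomorphy of $\widehat F$ in a strip) is identical. For Part 2 you take a genuinely different, more algebraic route: the paper applies Part 1 with the \emph{continuous, non-polynomial} function $H=\sqrt{M}$ to approximate $\sqrt f$ by $\sqrt{M}\,S_n$ in $L^{2p}$ and then multiplies $\sqrt{M}S_n$ by $\sqrt{M}\,\ovl{S_n}$, whereas you factor $M=A\ovl A$ with $A\in\CN[x]$ and approximate the phase-corrected square root $B=A\sqrt f/|A|$, using Part 1 only for polynomial $H$. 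Both are valid; the paper's version shows why Part 1 is stated for continuous $H$ rather than just polynomials, while yours needs the (standard, and correctly justified) fact that a real polynomial nonnegative on $\RN$ is $|A|^2$ for a complex polynomial $A$. A small bonus of your write-up is that you explicitly verify the easy inclusion that the closure of $\{MS\ovl S\}$ consists of a.e.\ nonnegative functions, which the paper leaves implicit.
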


\begin{proof}
1.\ The function $w(x)|H(x)|^p$ satisfies the assumptions of Lemma~\ref{LemClassicalDense}. Therefore polynomials are dense in the space $L^p(\RN,w|H|^p)$. The map $g\mapsto gH$ is an isometry between $L^p(\RN,w|H|^p)$ and $L^p(\RN,w)$. The statement follows.

2.\ Suppose that $f\in L^p(\RN,w)$ is nonnegative almost everywhere. Then $\sqrt{f}$ is an element of~$L^{2p}(\RN,w)$. Using (1), we find a sequence $S_n\in \CN[x]$ such that $\sqrt{M}S_n$ tends to $\sqrt{f}$ in $L^{2p}(\RN,w)$. We~use the following corollary of Cauchy--Schwarz inequality: if $a_k$, $b_k$ tend to $a$,~$b$ respectively in $L^{2p}(\RN,w)$ then $a_kb_k$ tends to $ab$ in $L^p(\RN,w)$.
Applying this to $a=b=\sqrt{f}$, $a_n=\sqrt{M}S_n$, $b_k=\sqrt{M}\ovl{S_n}$ we deduce that $MS_n\ovl{S_n}$ tends to $f$ in $L^p(\RN,w)$. The statement follows.
\end{proof}

\subsection[The case when all roots of P(x) satisfy |Re alpha|<1/2]
{The case when all roots of $\boldsymbol{P(x)}$ satisfy $\boldsymbol{|\Re\alpha|<\frac{1}{2}}$}

Let $\mc A$ be a filtered quantization of $A$ with conjugations $\rho_\pm$ such that $\rho_\pm^2=g_t$. We~want to classify positive definite Hermitian $\rho_\pm$-invariant forms on $\mc{A}$, i.e., positive definite Hermitian forms $(\cdot,\cdot)$ on $\mc{A}$ such that
\[
(a\rho(y),b)=(a,yb)
\]
 for all $a,b,y\in\mc{A}$, where $\rho=\rho_\pm$.

In this subsection we will do the classification in the case when all roots $\alpha$ of $P(x)$ satisfy $|\Re\alpha|<\tfrac{1}{2}$. We~start with general results that are true for all parameters $P$.

 It is easy to see that Hermitian $\rho$-invariant forms are in one-to-one correspondence with $g_t$-twisted $\rho$-invariant traces, i.e., $g_t$-twisted traces $T$ such that $T(\rho(a))=\overline{T(a)}$. The correspondence is as follows:
 \[(a,b)=T(a\rho(b)),\qquad T(a)=(a,1).\] Therefore it is enough to classify $g_t$-twisted traces $T$ such that the Hermitian form $(a,b)=T(a\rho(b))$ is positive definite. This means that $T(a\rho(a))>0$ for all nonzero $a\in \mc{A}$. Recall that $\ad z$ acts on $\mc{A}$ diagonalizably, $\mc{A}=\oplus_{d\in \ZN} \mc{A}_{d}$. Thus it is enough to check the condition $T(a\rho(a))>0$ for homogeneous $a$.

\begin{Lemma}\label{posde}\qquad
\begin{enumerate}\itemsep=0pt
\item[$1.$]
$T$ gives a positive definite form if and only if one has $T(a\rho(a))>0$ for all nonzero $a\in \mc{A}$ of weight $0$ or $1$.
\item[$2.$]
$T$ gives a positive definite form if and only if
\[T\big(R(z)\ovl{R}(-z)\big)>0\qquad \text{and}\qquad
\lambda T\big(R\big(z-\tfrac{1}{2}\big)\ovl{R}\big(\tfrac{1}{2}-z\big)P\big(z-\tfrac{1}{2}\big)\big)>0
\]
for all nonzero $R\in \CN[x]$.
\end{enumerate}
\end{Lemma}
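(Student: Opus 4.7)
Both implications in each part include a trivial direction (positivity on all of $\mc A$ certainly restricts to positivity on $\mc A_0 \cup \mc A_1$, and Part~2 is just a rewriting of Part~1), so I will focus on the nontrivial converses.

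For Part~1 $(\Leftarrow)$, I plan to start from the observation that the form $(a,b) := T(a\rho(b))$ is block-diagonal along the weight decomposition $\mc A = \bigoplus_k \mc A_k$. Indeed, $\rho$ maps $\mc A_k$ to $\mc A_{-k}$ antilinearly, so $a\rho(b) \in \mc A_{\mathrm{wt}(a)-\mathrm{wt}(b)}$; since $T$ vanishes on $\mc A_j$ for $j\ne 0$, we get $(a,b)=0$ whenever $\mathrm{wt}(a)\ne\mathrm{wt}(b)$. A short verification using $\rho^2=g_t$ together with the twisted trace identity shows moreover that $\rho\colon \mc A_k\to\mc A_{-k}$ is an antilinear isometry, so it is enough to establish positivity on $\mc A_k$ for $k\ge 0$.

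The key technical step is the identity
\[
(vc,vc) = (cu,cu) \qquad \text{for every } c\in \mc A_{k-1},\ k\ge 2,
\]
relating $vc \in \mc A_k$ to $cu \in \mc A_{k-2}$. To prove it I would expand $(vc,vc) = \lambda\,T(vcu\,\rho(c))$, push $v$ cyclically to the right using the trace property (picking up a factor $t$), and then rewrite $\rho(c)v = \overline{\lambda_*}\,\rho(cu)$, which follows from $\rho(cu) = \rho(c)\rho(u) = \lambda_*\rho(c)v$ together with $|\lambda_*|=1$. This yields $(vc,vc) = \lambda t\overline{\lambda_*}\,(cu,cu)$, and the scalar $\lambda t \overline{\lambda_*}$ collapses to $1$ via the explicit constraints $\lambda_* = (-1)^n\lambda^{-1}$ and $t=(-1)^n\lambda^{-2}$ recorded in Section~\ref{conju}.

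Given the identity, the induction is automatic: for $a = v^k S(z)$ with $k\ge 2$, set $c = v^{k-1}S(z)$ so that $a=vc$ and $cu = v^{k-2}P(z-\tfrac12)S(z-1) \in \mc A_{k-2}$, and apply the identity. Iterating $\lfloor k/2\rfloor$ times descends to $\mc A_0$ (if $k$ is even) or $\mc A_1$ (if $k$ is odd); each step multiplies the underlying polynomial by a shift of $P$, so $P\ne 0$ and $S\ne 0$ guarantee the element at the bottom is nonzero, and the assumed positivity there lifts back to $(a,a)>0$. For Part~2, I would simply rewrite the weight $0$ and weight $1$ conditions from Part~1: weight $0$ elements $a = R(z)$ yield $T(a\rho(a)) = T(R(z)\overline{R}(-z))$, and weight $1$ elements $a = vS(z)$ give $a\rho(a) = \lambda P(z-\tfrac12)S(z-1)\overline{S}(-z)$, which becomes $\lambda P(z-\tfrac12)R(z-\tfrac12)\overline{R}(\tfrac12-z)$ after the substitution $R(y) = S(y-\tfrac12)$.

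The only real obstacle is the identity $(vc,vc) = (cu,cu)$; once it is in place, both parts follow quickly. The care needed is in tracking the constants $\lambda$, $\lambda_*$, $t$ and the interplay between the antilinear automorphism $\rho$, the $g_t$-twist of the trace, and the generalized Weyl relation $uv = P(z+\tfrac12)$.
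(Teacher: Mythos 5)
Your proposal is correct and follows essentially the same route as the paper: the paper reduces a weight-$k$ element in one shot by writing $a=v^kbv^k$ and cycling $u^k$ around the trace to land on $u^kv^kb$ of weight $0$ or $1$, which is just your identity $(vc,vc)=(cu,cu)$ iterated, with the same scalar bookkeeping $\lambda^2t=(-1)^n$ and the same treatment of negative weights via $T(\rho(b)g_t(b))=T(b\rho(b))$. Part 2 is likewise handled identically, up to the harmless reparametrization $a=vS(z)$ versus $a=R\big(z-\tfrac12\big)v$.
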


\begin{proof}
1.\ Suppose that $T(a\rho(a))>0$ for all nonzero $a\in \mc{A}$ of weight $0$ or $1$. Let~$a$ be a nonzero homogeneous element of $\mc{A}$ with positive weight. There exists $b$ of weight $0$ or $1$ and nonnegative integer $k$ such that $a=v^kbv^k$. We~have
\begin{gather*}
T(a\rho(a))=\lambda^{2k}T\big(v^kbv^ku^k\rho(b)u^k\big)=\lambda^{2k}T\big(g_t^{-1}\big(u^k\big)v^kbv^ku^k\rho(b)\big)
\\ \hphantom{T(a\rho(a))}
{}= \lambda^{2k}t^{k}T\big(u^kv^kbv^ku^k\rho(b)\big)=
(-1)^{nk}T\big(u^kv^kbv^ku^k\rho(b)\big)=T\big(u^kv^kb\rho\big(u^kv^kb\big)\big)>0
\end{gather*}
since $u^kv^kb$ is a homogeneous element of weight $0$ or $1$.

Suppose that $a$ is a nonzero homogeneous element of $\mc{A}$ with negative weight. Then $a=\rho(b)$, where $b$ is a homogeneous element with positive weight. We~get
\[
T(a\rho(a))=T(\rho(b)\rho^2(b))=T(\rho(b)g_t(b))=T(b\rho(b))>0.
\]

2.\ Suppose that $a$ is an element of $\mc{A}_0$. Then $a=R(z)$ for some $R\in\CN[x]$. We~have $T(a\rho(a))=T(R(z)\ovl{R}(-z))$.

Suppose that $a$ is an element of $\mc{A}_1$. Then $a=R\big(z-\frac{1}{2}\big)v$ for some $R\in \CN[x]$. We~have
\begin{gather*}
T(a\rho(a))=\lambda T\big(R\big(z-\tfrac{1}{2}\big)v \ovl{R}\big(-z-\tfrac{1}{2}\big)u\big)
= \lambda T\big(R\big(z-\tfrac{1}{2}\big)vu \ovl{R}\big(-z+\tfrac{1}{2}\big)\big)
\\ \hphantom{T(a\rho(a))}
{}=
\lambda T\big(R\big(z-\tfrac{1}{2}\big)\ovl{R}\big(\tfrac{1}{2}-z\big)P\big(z-\tfrac{1}{2}\big)\big).
\end{gather*}

The statement follows.
\end{proof}

\begin{Proposition}
\label{PropFromPositiveTraceToFunctions}
Suppose that $T(R(z))=\int_{{\rm i}\RN}R(x)w(x)|{\rm d}x|$. Then $T$ gives positive definite form if and only if $w(x)$ and $\lambda w\big(x+\frac{1}{2}\big)P(x)$ are nonnegative on ${\rm i}\RN$.
\end{Proposition}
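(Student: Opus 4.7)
The plan is to combine Lemma~\ref{posde}(2) with the integral representation of $T$ and a contour shift argument.

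\textbf{Step 1 (Reduction to two polynomial conditions).} By Lemma~\ref{posde}(2), the form is positive definite if and only if
\[
T\big(R(z)\ovl R(-z)\big)>0 \quad\text{and}\quad \lambda T\big(R\big(z-\tfrac12\big)\ovl R\big(\tfrac12-z\big)P\big(z-\tfrac12\big)\big)>0
\]
for every nonzero $R\in\CN[x]$. I will translate each of these inequalities, via the integral formula $T(R(z))=\int_{{\rm i}\RN}R(x)w(x)|{\rm d}x|$, into a pointwise positivity condition on a function on ${\rm i}\RN$.

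\textbf{Step 2 (First condition).} The key elementary observation is that for $x\in {\rm i}\RN$ one has $\ovl R(-x)=\ovl{R(x)}$, so $R(x)\ovl R(-x)=|R(x)|^2$. Hence
\[
T\big(R(z)\ovl R(-z)\big)=\int_{{\rm i}\RN}|R(x)|^2 w(x)|{\rm d}x|.
\]
By the analyticity of $w$ on the imaginary axis (no poles there, since all roots of $\mathbf{P}({\rm e}^{2\pi {\rm i}x})$ are shifted off ${\rm i}\RN$ by $1/2$), this integral is positive for all nonzero polynomials $R$ if and only if $w\ge 0$ on ${\rm i}\RN$ (and not identically zero).

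\textbf{Step 3 (Second condition via a contour shift).} Substituting $y=x-\tfrac12$, the second quantity becomes
\[
\lambda T\big(R\big(z-\tfrac12\big)\ovl R\big(\tfrac12-z\big)P\big(z-\tfrac12\big)\big)=\lambda\int_{-\frac12+{\rm i}\RN}R(y)\ovl R(-y)P(y)w\big(y+\tfrac12\big)|{\rm d}y|.
\]
The main step is to shift this contour to ${\rm i}\RN$. The integrand is holomorphic in the strip $|\Re y|\le \tfrac12$: the only possible poles of $w\big(y+\tfrac12\big)$ inside the strip sit at $y=\alpha_j$ (since $|\Re \alpha_j|<\tfrac12$), with multiplicity $m(\alpha_j)$, and these are exactly killed by the matching zero of $P(y)$. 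This is precisely property~(3) from Proposition~\ref{PropTracesForOpenStrip}. Together with the exponential decay property~(2), Cauchy's theorem justifies
\[
\lambda\int_{-\frac12+{\rm i}\RN}R(y)\ovl R(-y)P(y)w\big(y+\tfrac12\big)|{\rm d}y|=\int_{{\rm i}\RN}|R(y)|^2\,\lambda P(y)w\big(y+\tfrac12\big)|{\rm d}y|,
\]
using once again that $R(y)\ovl R(-y)=|R(y)|^2$ on ${\rm i}\RN$. The value $\lambda P(y)w\big(y+\tfrac12\big)$ is real-valued on ${\rm i}\RN$ (this is where the compatibility $\ovl P(-x)=(-1)^nP(x)$ of Section~\ref{conju} enters), so this integral is positive for all nonzero $R$ if and only if $\lambda w\big(x+\tfrac12\big)P(x)\ge 0$ on ${\rm i}\RN$.

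\textbf{Step 4 (Strict positivity/nondegeneracy).} For the ``if'' direction we need strict inequality. Since $w$ and $\lambda w(\cdot+\tfrac12)P$ are real-analytic on ${\rm i}\RN$ (in the latter case by Step 3), nonnegativity together with being not identically zero forces positivity on a dense open subset of ${\rm i}\RN$; combined with the fact that a nonzero polynomial has only finitely many zeros, this gives $\int_{{\rm i}\RN}|R|^2 w|{\rm d}x|>0$ and similarly for the second integral.

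The main obstacle is the contour shift in Step~3, where one must use both the cancellation of the poles of $w(y+\tfrac12)$ inside the strip by the zeros of $P(y)$ and the exponential decay at ${\rm i}\infty$ to close the rectangle; the remaining steps are essentially the identity $R(y)\ovl R(-y)=|R(y)|^2$ on ${\rm i}\RN$ together with standard density arguments.
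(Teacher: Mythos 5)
Your proof is correct and follows essentially the same route as the paper: reduction via Lemma~\ref{posde}(2), the observation that $R(x)\ovl{R}(-x)=|R(x)|^2$ for $x\in{\rm i}\RN$ (equivalently, that the polynomials nonnegative on ${\rm i}\RN$ are exactly those of the form $R(x)\ovl{R}(-x)$), the contour shift justified by holomorphy of $P(x)w\big(x+\tfrac{1}{2}\big)$ in the closed strip and exponential decay, and a density argument. The only point worth making explicit is that the ``only if'' direction in your Steps 2 and 3 is exactly where Lemma~\ref{CorClassical}(2) (with $M=1$) must be invoked --- one approximates the indicator of the set where the weight is negative by functions $S\ovl{S}$ --- which is what the paper does and what your closing reference to ``standard density arguments'' should point to.
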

\begin{proof} By Lemma~\ref{posde}
$T$ gives positive definite form if and only if
\[
T(R(z)\ovl{R}(-z))>0
\]
and
\[
\lambda T\big(R\big(z-\tfrac{1}{2}\big)\ovl{R}\big(\tfrac{1}{2}-z\big)P\big(z-\tfrac{1}{2}\big)\big)>0
\]
for all nonzero $R\in \CN[x]$. A polynomial $S\in \CN[x]$ can be represented as $S(x)=R(x)\ovl{R}(-x)$ if~and only if $S$ is nonnegative on ${\rm i}\RN$. So~we have $T(R(z)\ovl{R}(-z))>0$ for all nonzero $R\in \CN[x]$ if~and only if
\[
\int_{{\rm i}\RN}S(x)w(x)|{\rm d}x|>0
\]
for all nonzero $S\in \CN[x]$ nonnegative on ${\rm i}\RN$. Using Lemma~\ref{CorClassical}(2) for $M=1$, we see that this is equivalent to $w(x)$ being nonnegative on ${\rm i}\RN$.

We have
\begin{gather*}
T\big(R\big(z-\tfrac{1}{2}\big)\ovl{R}\big(\tfrac{1}{2}-z\big)P\big(z-\tfrac{1}{2}\big)\big)=\int_{{\rm i}\RN}R\big(x-\tfrac{1}{2}\big)\ovl{R}\big(\tfrac{1}{2}-x\big)P\big(x-\tfrac{1}{2}\big)w(x)|{\rm d}x|
\\ \hphantom{T\big(R\big(z-\tfrac{1}{2}\big)\ovl{R}\big(\tfrac{1}{2}-z\big)P\big(z-\tfrac{1}{2}\big)\big)}
{}=\int_{\frac{1}{2}+{\rm i}\RN}R(x)\ovl{R}\big(-x\big)P(x)w\big(x+\tfrac{1}{2}\big)|{\rm d}x|
\\ \hphantom{T\big(R\big(z-\tfrac{1}{2}\big)\ovl{R}\big(\tfrac{1}{2}-z\big)P\big(z-\tfrac{1}{2}\big)\big)}
{}=\int_{{\rm i}\RN}R(x)\ovl{R}\big(-x\big)P(x)w\big(x+\tfrac{1}{2}\big)|{\rm d}x|.
\end{gather*}
In the last equality we used the Cauchy theorem and the fact that the function \mbox{$P(x)w\big(x+\tfrac{1}{2}\big)$} is~holomorphic when $|\Re x|\leq \frac{1}{2}$. Using Lemma~\ref{CorClassical}(2) for $M=1$ again, we see that \mbox{$\lambda T\big(R\big(z-\frac{1}{2}\big)$} $\times\ovl{R}\big(\frac{1}{2}-z\big) P\big(z-\frac{1}{2}\big)\big)>0$ for all nonzero $R\in \CN[x]$ if and only if $\lambda P(x)w\big(x+\frac{1}{2}\big)$ is nonnegative on ${\rm i}\RN$.
\end{proof}

From now on we assume that all roots $\alpha$ of $P(x)$ satisfy $|\Re\alpha|<\frac{1}{2}$. In this case every trace~$T$ can be represented as $T(R(z))=\int_{{\rm i}\RN}R(x)w(x)|{\rm d}x|$.
Recall that $w(x)={\rm e}^{2\pi {\rm i}cx}\frac{G({\rm e}^{2\pi {\rm i} x})}{\bP({\rm e}^{2\pi {\rm i} x})},$ where~$G$ is any polynomial with $\deg G\leq \deg P$ in the case when $c\neq 0$ and $\deg G<\deg P$ in the case when~$c=0$.

\begin{Proposition}
\label{PropFromFunctionsToPolynomials}\qquad
\begin{enumerate}\itemsep=0pt
\item[$1.$]
If $\lambda=-{\rm i}^{-n}{\rm e}^{-\pi {\rm i}c}$ $($i.e., $\rho=\rho_-)$ then $w(x)$ and $\lambda P(x)w\big(x+\frac{1}{2}\big)$ are nonnegative on ${\rm i}\RN$ if~and only if $G(X)$ is nonnegative when $X>0$ and nonpositive when $X<0$.
\item[$2.$]
If $\lambda=+{\rm i}^{-n}{\rm e}^{-\pi {\rm i}c}$ $($i.e., $\rho=\rho_+)$ then $w(x)$ and $\lambda w\big(x+\frac{1}{2}\big)P(x)$ are nonnegative on ${\rm i}\RN$ if~and only if $G(X)$ is nonnegative for all $X\in \RN$.
\end{enumerate}

\end{Proposition}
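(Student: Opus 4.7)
The plan is to work on the contour $x = {\rm i}y$, $y \in \RN$, and set $X := {\rm e}^{-2\pi y}$, so that $X$ parametrizes $(0,\infty)$ bijectively. By Proposition~\ref{PropFromPositiveTraceToFunctions}, the question reduces to two sign facts: on this contour, $w({\rm i}y)$ should be a strictly positive real scalar times $G(X)$, while $\lambda_{\pm} P({\rm i}y)\, w\big({\rm i}y + \tfrac{1}{2}\big)$ should be $\pm$ a strictly positive scalar times $G(-X)$. The two conclusions of the proposition then follow immediately: since $X$ covers all of $(0,\infty)$ while $-X$ covers $(-\infty,0)$, the conditions become $G \ge 0$ on $(0,\infty)$ together with $G(-X) \ge 0$ (resp.\ $G(-X) \le 0$) for $X > 0$, exactly matching the $\rho_+$ (resp.\ $\rho_-$) case.

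For the first sign fact, direct substitution gives $w({\rm i}y) = {\rm e}^{-2\pi c y}\, G(X)/\bP(X)$, and the prefactor is positive since $c$ is real. To see $\bP(X) > 0$ on $(0,\infty)$, I would use the root pairing $\alpha \leftrightarrow -\ovl\alpha$ for $P$ (forced by $\ovl{P}(-x) = (-1)^n P(x)$): the factors of $\bP$ group into real quadratics $X^2 + 2\, {\rm e}^{-2\pi b}\cos(2\pi a)\, X + {\rm e}^{-4\pi b}$ for $\alpha = a + {\rm i}b$ with $a \ne 0$, whose discriminant $-4\, {\rm e}^{-4\pi b}\sin^2(2\pi a)$ is strictly negative under the strict-strip hypothesis $|\Re\alpha| < \tfrac{1}{2}$, together with linear factors $X + {\rm e}^{-2\pi b}$ for purely imaginary roots $\alpha = {\rm i}b$. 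Each such factor is strictly positive on $(0,\infty)$.

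For the second sign fact, I would apply the identity ${\rm e}^{2\pi {\rm i}\alpha_j} - {\rm e}^{2\pi {\rm i}x} = -2{\rm i}\, {\rm e}^{\pi {\rm i}(x+\alpha_j)} \sin(\pi(x - \alpha_j))$ to rewrite (using $\sum_j \alpha_j = 0$)
\[
\frac{P(x)}{\bP(-{\rm e}^{2\pi {\rm i}x})} = \frac{{\rm i}^n}{2^n}\, {\rm e}^{-\pi {\rm i}n x} \prod_j \frac{x-\alpha_j}{\sin(\pi(x-\alpha_j))}.
\]
Substituting $x = {\rm i}y$ and multiplying by $\lambda_{\pm} = \pm {\rm i}^{-n} {\rm e}^{-\pi {\rm i}c}$, the unimodular phases cancel and one obtains
\[
\lambda_{\pm} P({\rm i}y)\, w\big({\rm i}y + \tfrac{1}{2}\big) = \pm \frac{{\rm e}^{-\pi(2c-n)y}}{2^n}\, G(-X) \prod_j \frac{{\rm i}y - \alpha_j}{\sin(\pi({\rm i}y - \alpha_j))}.
\]
The product over $j$ is analyzed by pairing $\alpha \leftrightarrow -\ovl\alpha$ again: each pair with $a \ne 0$ contributes the manifestly positive $\frac{(y-b)^2 + a^2}{\sinh^2(\pi(y-b)) + \sin^2(\pi a)}$, and a purely imaginary root $\alpha = {\rm i}b$ contributes $\frac{y-b}{\sinh(\pi(y-b))} > 0$. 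Hence the product is strictly positive, and the entire right-hand side is $\pm$ a strictly positive real scalar times $G(-X)$.

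The main obstacle is the bookkeeping: the real-valuedness of $\lambda_{\pm} P({\rm i}y)\, w\big({\rm i}y + \tfrac{1}{2}\big)$ on ${\rm i}\RN$ is not apparent before the pairing is made, and one must carefully track the unimodular phases ${\rm i}^{\pm n}$ and ${\rm e}^{\pm \pi {\rm i}c}$ to verify that the overall sign is determined precisely by the choice $\pm$ in the definition of $\lambda_{\pm}$. Once this is in hand, combining the two sign facts through Proposition~\ref{PropFromPositiveTraceToFunctions} yields the stated characterization.
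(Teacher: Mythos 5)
Your proposal is correct and follows essentially the same route as the paper: reduce via Proposition~\ref{PropFromPositiveTraceToFunctions} to the signs of $w$ and $\lambda P(x)w\big(x+\tfrac12\big)$ on ${\rm i}\RN$, show the prefactor of $G(X)$ (resp.\ $G(-X)$) is positive, and read off the sign conditions on $G$ over $(0,\infty)$ and $(-\infty,0)$. The only difference is in how the positivity of ${\rm i}^{-n}P(x)/\bP\big({-}{\rm e}^{2\pi {\rm i}x}\big)$ on ${\rm i}\RN$ is verified: the paper argues it is real, never changes sign, and has the right sign at $-{\rm i}\infty$, whereas you make it manifest via the product formula with $\sin(\pi(x-\alpha_j))$ and the root pairing $\alpha\leftrightarrow-\ovl{\alpha}$ — a more explicit but equivalent computation.
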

\begin{proof}
It is easy to see that $\bP(X)$ is positive when $X>0$. Therefore $w(x)$ is nonnegative on~${\rm i}\RN$ if and only if $G(X)$ is nonnegative when $X>0$.

We have
\[
\lambda P(x)w\big(x+\frac{1}{2}\big)=\pm {\rm i}^{-n}P(x) {\rm e}^{2\pi {\rm i} cx}\frac{G(-{\rm e}^{2\pi {\rm i} x})}{\bP(-{\rm e}^{2\pi {\rm i} x})}.
\]
It is clear that $\frac{{\rm i}^{-n}P(x)}{\bP(-{\rm e}^{2\pi {\rm i} x})}$ belongs to $\RN$ when $x\in {\rm i}\RN$ and does not change sign on ${\rm i}\RN$. When~$x$ tends to $-{\rm i}\infty$, the functions ${\rm i}^{-n}P(x)$ and $\bP(-{\rm e}^{2\pi {\rm i} x})$ have sign $(-1)^n$. Therefore $\frac{{\rm i}^{-n}P(x)}{\bP({\rm e}^{-2\pi {\rm i}x})}$ is positive on ${\rm i}\RN$. We~deduce that $\pm G(X)$ should be nonnegative when $X<0$. So~there are two cases:
\begin{enumerate}\itemsep=0pt
\item
If $\lambda=-{\rm i}^{-n}{\rm e}^{-\pi {\rm i}c}$ then $G(X)$ should be nonnegative when $X>0$ and nonpositive when $X<0$.
\item
If $\lambda=+{\rm i}^{-n}{\rm e}^{-\pi {\rm i}c}$ then $G(X)$ should be nonnegative for all $X\in \RN$.
\end{enumerate}
This proves the proposition.
\end{proof}

We deduce the following theorem from Propositions~\ref{PropFromPositiveTraceToFunctions} and~\ref{PropFromFunctionsToPolynomials}:
\begin{Theorem}
\label{ThrPositiveFormSmallerThanOne}
Suppose that $\mc{A}$ is a deformation of $A=\CN[p,q]^{\mathbb Z/n}$ with conjugation $\rho$ as above, $\rho^2=g_t$, $t=\exp(2\pi {\rm i} c)$. Let~$P(x)$ be the parameter of $\mc{A}$, $\varepsilon={\rm i}^n{\rm e}^{\pi {\rm i}c}\lambda=\pm 1$ $($so $\rho=\rho_\varepsilon)$. Then the cone $\mc C_+$ of positive definite $\rho$-invariant forms on $\mc{A}$ is isomorphic to the cone of nonzero polynomials $G(X)$ of degree $\le n-1$ with $G(0)=0$ if $c=0$ such that
\begin{enumerate}\itemsep=0pt
\item[$1.$] If $\varepsilon=-1$ then $G(X)$ is nonnegative when $X>0$ and nonpositive when $X<0$.
\item[$2.$] If $\varepsilon=1$ then $G(X)$ is nonnegative for all $X\in \RN$.
\end{enumerate}
\end{Theorem}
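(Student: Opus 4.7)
The plan is to deduce this theorem directly by assembling the three preceding results of Section~\ref{sec4} (Propositions~\ref{PropFromPositiveTraceToFunctions} and \ref{PropFromFunctionsToPolynomials}, together with Proposition~\ref{PropTracesForOpenStrip} from Section~\ref{sec3}). The strategy factors into three steps: (i) reduce positive definite $\rho$-invariant forms to positive $g_t$-twisted traces; (ii) parametrize such traces by polynomials $G$; (iii) translate the analytic positivity conditions on the weight function into the sign conditions on $G$.

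For step (i), I would invoke the bijection, noted at the start of the subsection, between $\rho$-invariant Hermitian forms on $\mathcal{A}$ and $\rho$-invariant $g_t$-twisted traces $T$, given by $(a,b)=T(a\rho(b))$ with inverse $T(a)=(a,1)$. Under this bijection, positive definiteness of the form corresponds to $T(a\rho(a))>0$ for all nonzero $a\in\mathcal{A}$, and nondegeneracy corresponds to $T\neq 0$. For step (ii), since all roots of $P(x)$ lie in the open strip $|\Re\alpha|<\tfrac{1}{2}$, Proposition~\ref{PropTracesForOpenStrip} identifies the space of $g_t$-twisted traces linearly with the space of polynomials $G(X)$ of degree $\le n-1$ (with $G(0)=0$ when $c=0$), via $T(R(z))=\int_{{\rm i}\R}R(x)w(x)|{\rm d}x|$ for $w(x)={\rm e}^{2\pi {\rm i}cx}G({\rm e}^{2\pi {\rm i} x})/\mathbf{P}({\rm e}^{2\pi {\rm i} x})$. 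In particular, $T=0$ corresponds to $G=0$, so requiring the form to be nonzero is the same as requiring $G\neq 0$.

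For step (iii), Proposition~\ref{PropFromPositiveTraceToFunctions} asserts that positive definiteness of the form associated to $T$ is equivalent to $w(x)$ and $\lambda P(x)w\bigl(x+\tfrac{1}{2}\bigr)$ both being nonnegative on~${\rm i}\R$, and Proposition~\ref{PropFromFunctionsToPolynomials} translates these two analytic conditions into the two sign conditions on $G(X)$ on the positive/negative real half-lines, with the dichotomy governed precisely by $\varepsilon={\rm i}^n{\rm e}^{\pi {\rm i}c}\lambda=\pm 1$. Combining steps (i)--(iii) gives a linear bijection (hence a cone isomorphism) between $\mathcal{C}_+$ and the stated cone of nonzero polynomials $G$.

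There is essentially no substantial obstacle left; the only point worth pausing on is that Proposition~\ref{PropFromPositiveTraceToFunctions} delivers a \emph{strict} inequality $T(a\rho(a))>0$ from the merely \emph{nonnegative} behavior of $w$. This is legitimate because $G\neq 0$ forces $w$ to vanish only at the isolated imaginary points $x={\rm i}y$ with ${\rm e}^{-2\pi y}$ a root of $G$, so $w>0$ on an open subset of ${\rm i}\R$ of positive measure; hence for any nonzero polynomial $S(x)=R(x)\overline{R}(-x)$ that is nonneg on ${\rm i}\R$, the integral $\int_{{\rm i}\R}S(x)w(x)|{\rm d}x|$ is strictly positive. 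Thus the reduction of strict positivity of the trace to nonnegativity of $w$ (already handled via Lemma~\ref{CorClassical} in the proof of Proposition~\ref{PropFromPositiveTraceToFunctions}) extends cleanly, and the theorem follows.
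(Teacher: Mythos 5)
Your proposal is correct and follows essentially the same route as the paper, which states Theorem~\ref{ThrPositiveFormSmallerThanOne} precisely as the combination of Propositions~\ref{PropFromPositiveTraceToFunctions} and~\ref{PropFromFunctionsToPolynomials} applied to the integral parametrization of traces by polynomials $G$. Your closing remark on strictness is a point the paper absorbs into the proof of Proposition~\ref{PropFromPositiveTraceToFunctions} via Lemma~\ref{CorClassical}(2), so nothing further is needed.
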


Thus for $\rho=\rho_-$, $G(X)=XU(X)$ where $U(X)\ge 0$ is a polynomial of degree $\le n-2$, and for $\rho=\rho_+$, $G(X)\ge 0$ is a polynomial of degree $\le n-1$ with $G(0)=0$ if $c=0$; in the latter case $G(X)=X^2U(X)$ where $U(X)\ge 0$ is a polynomial of degree $\le n-3$. Therefore, we get

\begin{Proposition} The dimension of $\mc C_+$ modulo scaling is
\begin{enumerate}\itemsep=0pt
\item[$\bullet$] $n-2$ for even $n$ and $n-3$ for odd $n$ if $\rho=\rho_-$;

\item[$\bullet$] $n-2$ for even $n$ and $n-1$ for odd $n$ if $c\ne 0$ and $\rho=\rho_+$;

\item[$\bullet$] $n-4$ for even $n$ and $n-3$ for odd $n$ if $c=0$ and $\rho=\rho_+$.
\end{enumerate}
$($Here if the dimension is $<0$, the cone $\mc C_+$ is empty$.)$
\end{Proposition}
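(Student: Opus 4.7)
The plan is to invoke Theorem~\ref{ThrPositiveFormSmallerThanOne} directly: it sets up a bijection between nonzero elements of $\mc C_+$ and the cone of polynomials $G(X)$ described there, so the dimension of $\mc C_+$ modulo scaling equals the real dimension of that polynomial cone minus one. Thus the problem reduces to a bookkeeping exercise in the space of real polynomials nonnegative on $\RN$.

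The key analytic input is the classical observation that a nonzero polynomial $U\in \RN[X]$ nonnegative on $\RN$ necessarily has even degree and positive leading coefficient. Consequently, the cone of nonnegative polynomials of degree $\le D$ sits in the subspace where the coefficients of $X^{2\lfloor D/2\rfloor+1},\ldots,X^D$ vanish, and in that subspace it has nonempty interior (for instance, $1+X^{2\lfloor D/2\rfloor}$ is strictly positive and thus lies in the interior). Therefore this cone has real dimension $2\lfloor D/2\rfloor+1$. The only additional verifications needed are that $G\ge 0$ together with $G(0)=0$ forces an even-order zero of $G$ at the origin (so $G=X^2U$ with $U\ge 0$ in the case $\rho_+$, $c=0$), and that $G\ge 0$ on $X>0$ combined with $G\le 0$ on $X<0$ forces $G=XU$ with $U\ge 0$ (compare signs across $X=0$ using continuity); both are immediate.

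It remains to substitute the correct value of $D$ in each of the three cases identified in the paragraph preceding the statement. For $\rho=\rho_-$ one has $G=XU$ with $U\ge 0$ of degree $\le n-2$, giving cone dimension $2\lfloor(n-2)/2\rfloor+1$, hence $n-2$ modulo scaling for even $n$ and $n-3$ for odd $n$. For $\rho=\rho_+$ with $c\ne 0$, $G\ge 0$ of degree $\le n-1$ gives $2\lfloor(n-1)/2\rfloor+1$, hence $n-2$ for even $n$ and $n-1$ for odd $n$. For $\rho=\rho_+$ with $c=0$, $G=X^2U$ with $U\ge 0$ of degree $\le n-3$ gives $2\lfloor(n-3)/2\rfloor+1$, hence $n-4$ for even $n$ and $n-3$ for odd $n$. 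The convention that a negative-dimensional cone is empty matches the parenthetical remark in the statement. There is no real obstacle here: once Theorem~\ref{ThrPositiveFormSmallerThanOne} is in hand, the whole argument is elementary dimension counting, the only subtle step being to account for the parity constraint on nonnegative real polynomials.
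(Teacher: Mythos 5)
Your proposal is correct and follows essentially the same route as the paper, which likewise reduces to Theorem~\ref{ThrPositiveFormSmallerThanOne}, writes $G=XU$ (for $\rho_-$) or $G=X^2U$ (for $\rho_+$, $c=0$) with $U\ge 0$, and counts dimensions; all of your numerical counts agree with the stated ones. The only difference is that you spell out the parity constraint on nonnegative real polynomials and the nonempty-interior argument, which the paper leaves implicit.
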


Consider now the special case of even short star-products (i.e., quaternionic structures). Let~$\mc A$ be an even quantization of $A$, and
$\mc C_+^{\rm even}$ the cone of positive $\sigma$-stable $s$-twisted traces (i.e., those defining even short star-products). Then we have

\begin{Proposition}\label{eve}
The dimensions
of $\mc C_+^{\rm even}$ modulo scaling in various cases are as follows:
\begin{enumerate}\itemsep=0pt
\item[$\bullet$] $\frac{n-3}{2}$ if $\rho=\rho_-$, $n$ odd;

\item[$\bullet$] $\frac{n-1}{2}$ if $\rho=\rho_+$, $n$ odd;

\item[$\bullet$] $\frac{n-2}{2}$ if $\rho=\rho_-$, $n$ even;

\item[$\bullet$] $\frac{n-4}{2}$ if $\rho=\rho_+$, $n$ even.
\end{enumerate}
\end{Proposition}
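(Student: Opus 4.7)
The plan is to combine Theorem~\ref{ThrPositiveFormSmallerThanOne}, which identifies $\mc C_+$ with a cone of real polynomials $G(X)$ subject to sign conditions, with the $\sigma$-invariance constraint, which by the last sentence of Proposition~\ref{PropQuaternionicTraces} amounts to $w(x)=w(-x)$. The whole argument then reduces to counting nonnegative polynomials in a certain reciprocal subspace.

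First I would translate $\sigma$-invariance into a reciprocity condition on $G$. Since $\mc A$ is even, $P(-x)=(-1)^nP(x)$, so the roots of $P$ come in pairs $\{\alpha,-\alpha\}$ (with $0$ itself a root when $n$ is odd). A direct computation gives $\bP(X)=X^n\bP(1/X)$. Setting $\zeta={\rm e}^{2\pi {\rm i}x}$ and using $t=(-1)^n$, so that $2c\equiv n\pmod 2$, one obtains
\[
w(-x)=\zeta^{\,n-2c}\,\frac{G(\zeta^{-1})}{G(\zeta)}\,w(x),
\]
so $w(x)=w(-x)$ is equivalent to $G(\zeta)=\zeta^{\,m}G(\zeta^{-1})$, where $m:=n-2c$ equals $n$ when $n$ is even and $n-1$ when $n$ is odd---in both cases an even integer. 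That is, $G$ is a real reciprocal polynomial of degree $m$.

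Next I would incorporate positivity from Theorem~\ref{ThrPositiveFormSmallerThanOne} by writing $G(X)=X^{\kappa}U(X)$ with $U\ge 0$ on $\R$, where $\kappa=1$ when $\rho=\rho_-$, $\kappa=2$ when $\rho=\rho_+$ and $n$ is even (since then $c=0$ forces $G(0)=0$), and $\kappa=0$ when $\rho=\rho_+$ and $n$ is odd. The reciprocity of $G$ then reads $U(X)=X^{d}U(1/X)$ with $d:=m-2\kappa$; the four cases of the proposition yield $d=n-2,\,n-4,\,n-3,\,n-1$ respectively, all even.

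Finally I would count. The linear subspace $V_d\subset\R[X]$ of polynomials of degree $\le d$ satisfying $U(X)=X^{d}U(1/X)$ has dimension $d/2+1$ when $d$ is even (the relation $b_k=b_{d-k}$ pairs indices with a unique fixed point at $k=d/2$). The cone of elements of $V_d$ nonnegative on $\R$ has nonempty interior, exhibited for example by $\prod_{j=1}^{d/2}(X^2+a_jX+1)$ with $|a_j|<2$, which is strictly positive on $\R$ and reciprocal of degree $d$; coercivity plus continuity show that a full $V_d$-neighborhood of it remains strictly positive. Hence $\mc C_+^{\rm even}$ has dimension $d/2+1$, so modulo scaling its dimension is $d/2$. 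Substituting the four values of $d$ recovers the proposition. The main step is the identification $\bP(X)=X^n\bP(1/X)$ and the ensuing translation into a reciprocity condition on $G$; once these are in hand, the remaining dimension count is routine linear algebra and convexity.
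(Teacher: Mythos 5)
Your argument is correct and is essentially the proof the paper leaves implicit: Proposition~\ref{eve} is stated without proof as a consequence of Theorem~\ref{ThrPositiveFormSmallerThanOne} combined with the $\sigma$-invariance condition $w(x)=w(-x)$, which (via $\bP(X)=X^n\bP(1/X)$) is exactly the palindromy condition $G(X)=X^{n-2c}G(1/X)$ you derive. Your case-by-case count of nonnegative palindromic polynomials, including the full-dimensionality of the positive cone, matches the paper's answers in all four cases.
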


Proposition~\ref{eve} shows that the only cases of a unique positive
$\sigma$-stable $s$-twisted trace are $\rho=\rho_+$ for $n=1,4$ and $\rho=\rho_-$ for $n=2,3$.

The paper~\cite{BPR} considers the case $\rho=\rho_+$ if $n=0,1$ mod $4$
and $\rho=\rho_-$ if $n=2,3$ mod $4$; this is the canonical quaternionic structure of the hyperK\"ahler cone (see~\cite[Section~3.8]{ES}), since it is obtained from $\rho_+$ on $\mathbb C[p,q]$ by restricting to $\mathbb Z/n$-invariants. Thus for $n\le 4$ the unitary even star-product is unique, as conjectured in~\cite{BPR}. However, for $n\ge 5$ this is no longer so. For~example, for $n=5$ (a case commented on at the end of section 6 of~\cite{BPR}) by Proposition~\ref{eve} the cone $\mc C_+^{\rm even}$ modulo scaling is 2-dimensional (which disproves the most optimistic conjecture of~\cite{BPR} that a unitary even star-product is always unique).\footnote{It is curious that in the case considered in~\cite{BPR}, the dimension of $\mc C_+^{\rm even}$ modulo scaling is always even.}

\begin{Example} Let $n=1$, $P(x)=x$, so $\mathbf{P}(X)=X+1$.
Then for $\rho=\rho_-$ there are no positive traces while for $\rho=\rho_+$
positive traces exist only if $c\ne 0$. In this case there is a unique positive trace up to scaling given by the weight function
\[
{w}(x)=\frac{{\rm e}^{2\pi {\rm i}\left(c-\frac{1}{2}\right)x}}{2\cos \pi x}.
\]
In particular, the only quaternionic case is $\rho=\rho_+$, $c=\frac{1}{2}$,
which gives ${w}(x)=\frac{1}{2\cos \pi x}$.

\end{Example}

\begin{Example}\label{neq2} Let $n=2$, $P(x)=x^2+\beta^2$, $\beta^2\in \mathbb R$ so we have $\mathbf{P}(X)=(X+{\rm e}^{2\pi\beta})(X+{\rm e}^{-2\pi\beta})$. We~assume that $\beta^2>-\frac{1}{4}$ so that all roots of $P$ are in the strip $|\Re x|<\frac12$. Then $\rho=\rho_-$ gives a unique up to scaling positive trace defined by the weight function
\[
{w}(x)=\frac{{\rm e}^{2\pi {\rm i}cx}}{4\cos \pi(x-\beta)\cos \pi(x+\beta)},
\]
and $\rho=\rho_+$ is possible if and only if $c\ne 0$ and gives a unique up to scaling positive trace defined by the weight function
\[
{ w}(x)=\frac{{\rm e}^{2\pi{\rm i}(c-1)x}}{4\cos \pi(x-\beta)\cos \pi(x+\beta)}.
\]

In particular, the only quaternionic case is $\rho=\rho_-$, $c=0$, with
\[
{w}(x)=\frac{1}{4\cos \pi(x-\beta)\cos \pi(x+\beta)},
\]
which corresponds to the ${\rm SL}_2$-invariant short star-product.
There are two subcases: $\beta^2\ge 0$, which corresponds to the
{\it spherical unitary principal series} for ${\rm SL}_2(\mathbb C)$, and
$-\frac{1}{4}<\beta^2<0$, which corresponds to the {\it spherical unitary complementary series} for the same group (namely, the trace form is exactly the positive inner product on the underlying Harish-Chandra bimodule).

Note that together with the trivial representation $\big($corresponding to $\beta^2=-\frac{1}{4}\big)$, these representations are well known to exhaust irreducible spherical unitary representations of ${\rm SL}_2(\mathbb C)$~\cite{V}.
\end{Example}

\begin{Example}
Let $n=3$ and $P(x)=x^3+\beta^2x=x(x-{\rm i}\beta )(x+{\rm i}\beta )$, where $\beta^2\in \mathbb R$.
This gives the algebra defined by formulas~(6.17), (6.18)
of~\cite{BPR}, with $\zeta=1$; namely,
the generators $\hat X$, $\hat Y$, $\hat Z$ of~\cite{BPR} are $v$, $u$, $z$, respectively, and the parameter $\kappa$ of~\cite{BPR} is $\kappa=-\beta^2-\frac{1}{4}$.
This is an even quantization of $A=\mathbb C[X_3]$. Thus even short star-products are parametrized by a~single parameter $\alpha$; namely, the corresponding $\sigma$-invariant $s$-twisted trace such that $T(1)=1$ is determined by the condition that $T(z^2)=-\alpha$ (using the notation of~\cite{BPR}).

Assume that $\beta^2>-\frac{1}{4}$ (i.e., $\kappa<0$), so that all the roots of $P$ are in the strip $|\Re x|<\frac{1}{2}$. We~have
\[
\mathbf{P}(X)=(X+1)\big(X+{\rm e}^{2\pi \beta}\big)\big(X+{\rm e}^{-2\pi \beta}\big).
\]
In this case $c=\frac{1}{2}$ so the trace $T$, up to scaling, is given by
\[
T(R(z))=\int_{{\rm i}\mathbb R}R(x)w(x)|{\rm d}x|,
\]
where
\[
w(x)={\rm e}^{\pi {\rm i}x}\frac{G({\rm e}^{2\pi {\rm i}x})}{({\rm e}^{2\pi {\rm i}x}+1)({\rm e}^{2\pi {\rm i}(x-{\rm i}\beta )}+1)({\rm e}^{2\pi {\rm i}(x+{\rm i}\beta )}+1)},
\]
with $\deg(G)\le 2$. Moreover, because of evenness we must have $w(x)=w(-x)$, so $G(X)=X^2G\big(X^{-1}\big)$. Up to scaling, such polynomials $G$ form a 1-parameter family, parametrized by $\alpha$.

Following~\cite[Section~6.3]{BPR}, let us equip the corresponding quantum algebra $\mc A=\mc A_P$ with the quaternionic structure $\rho_-$ given by\footnote{Note that our $\rho$ is $\rho^{-1}$ in~\cite{BPR}, so we use $\rho_-$ while~\cite{BPR} use $\rho_+=\rho_-^{-1}$.}
\[
\rho_-(v)=-u,\qquad \rho_-(u)=v,\qquad \rho_-(z)=-z,
\]
 and let us determine which traces are unitary
for this quaternionic structure. According to The\-o\-rem~\ref{ThrPositiveFormSmallerThanOne}, there is a unique
such trace (which is automatically $\sigma$-stable), corresponding to~$G(X)=X$. Thus this trace is given by the weight function
\[
w(x)=\frac{1}{\cos \pi x\cos \pi (x-{\rm i}\beta )\cos\pi(x+{\rm i}\beta )}.
\]
Hence,
\[
T(z^{k})=\int_{{\rm i}\mathbb R}\frac{x^{k}|{\rm d}x|}{\cos \pi x\cos \pi (x-{\rm i}\beta )\cos\pi(x+{\rm i}\beta )},
\]
in particular, $T(z^k)=0$ if $k$ is odd.

For even $k$ this integral can be computed using the residue formula. Namely, assume $\beta\ne 0$ and let us first compute $T(1)$. Replacing the contour ${\rm i}\mathbb R$ by $1+{\rm i}\mathbb R$ and subtracting,
we find using the residue formula:
\[
2T(1)=2\pi\big({\rm Res}_{\frac{1}{2}}w+{\rm Res}_{\frac{1}{2}-{\rm i}\beta }w+{\rm Res}_{\frac{1}{2}+{\rm i}\beta }w\big).
\]
Now,
\[
{\rm Res}_{\frac{1}{2}}w=\frac{1}{\pi\sinh^2\pi \beta},
\]
while
\[
{\rm Res}_{\frac{1}{2}-{\rm i}\beta }w={\rm Res}_{\frac{1}{2}+{\rm i}\beta }w=-\frac{1}{\pi \sinh \pi\beta \sinh 2\pi \beta}.
\]
Thus
\[
T(1)=\frac{1}{\sinh^2\pi \beta}-\frac{2}{\sinh \pi\beta \sinh 2\pi\beta }=
\frac{1}{\sinh^2\pi \beta}\bigg(1-\frac{1}{\cosh \pi\beta }\bigg)=\frac{1}{2\cosh^2(\frac{\pi\beta }{2})\cosh \pi\beta }.
\]
Note that this function has a finite value at $\beta=0$, which is the answer in that case.

Now let us compute $T\big(z^2\big)$.
Again replacing the contour ${\rm i}\mathbb R$ with $1+{\rm i}\mathbb R$ and subtracting,
we~get
\[
T(1)+2T\big(z^2\big)=T\big(z^2\big)+T\big((z+1)^2\big)=2\pi\big({\rm Res}_{\frac{1}{2}}x^2w+{\rm Res}_{\frac{1}{2}-{\rm i}\beta }x^2w+{\rm Res}_{\frac{1}{2}+{\rm i}\beta }x^2w\big).
\]
Now,
\[
{\rm Res}_{\frac{1}{2}}x^2w=\frac{1}{4\pi\sinh^2\pi \beta},
\]
while
\[
{\rm Res}_{\frac{1}{2}-{\rm i}\beta }x^2w+{\rm Res}_{\frac{1}{2}+{\rm i}\beta }x^2w=\frac{2\beta^2-\frac{1}{2}}{\pi \sinh \pi\beta \sinh 2\pi \beta}.
\]
So
\[
T\big(z^2\big)=-\frac{1}{4\sinh^2\pi \beta}+\frac{2\beta^2+\frac{1}{2}}{\sinh \pi\beta \sinh 2\pi \beta}=
\frac{1}{\sinh^2\pi \beta}\bigg(-\frac{1}{4}+\frac{\beta^2+\frac{1}{4}}{\cosh \pi \beta}\bigg).
\]
Thus,
\[
\alpha=-\frac{T\big(z^2\big)}{T(1)}=\frac{1}{4}+\frac{\beta^2}{1-\cosh \pi\beta }=\frac{1}{4}-\frac{\kappa+\tfrac{1}{4}}{1-\cos\pi\sqrt{\kappa+\tfrac{1}{4}}}.
\]
This gives the equation of the curve in Fig.~2 in~\cite{BPR}. We~also note that for $\kappa=-\frac{1}{4}$ (i.e., $\beta=0$) we get
$\alpha=\frac{1}{4}-\frac{2}{\pi^2}$.
the value found in~\cite{BPR}.
\end{Example}

\begin{Example}
Let $n=4$ and
\[
P(x)=\big(x^2+\beta^2\big)\big(x^2+\gamma^2\big)=(x-{\rm i}\beta )(x+{\rm i}\beta )(x-{\rm i}\gamma)(x+{\rm i}\gamma),
\]
where $\beta^2,\gamma^2\in \mathbb R$.
This is an even quantization of $A=\mathbb C[X_4]$ discussed in~\cite[Section~6.4]{BPR}. Thus even short star-products are still parametrized by a single parameter $\alpha$; namely, the corresponding $\sigma$-invariant $s$-twisted trace such that $T(1)=1$
is determined by the condition that $T\big(z^2\big)=-\alpha$.

Assume that $\beta^2,\gamma^2>-\frac{1}{4}$, so that all the roots of $P$ are in the strip $|\Re x|<\frac{1}{2}$. We~have
\[
\mathbf{P}(X)=\big(X+{\rm e}^{2\pi \beta}\big)\big(X+{\rm e}^{-2\pi \beta}\big)\big(X+{\rm e}^{2\pi \gamma}\big)\big(X+{\rm e}^{-2\pi \gamma}\big).
\]
In this case $c=0$ so the trace $T$, up to scaling, is given by
\[
T(R(z))=\int_{{\rm i}\mathbb R}R(x)w(x)|{\rm d}x|,
\]
where
\[
w(x)=\frac{G({\rm e}^{2\pi {\rm i}x})}{({\rm e}^{2\pi {\rm i}(x-{\rm i}\beta )}+1)({\rm e}^{2\pi {\rm i}(x+{\rm i}\beta )}+1)({\rm e}^{2\pi {\rm i}(x-{\rm i}\gamma)}+1)({\rm e}^{2\pi {\rm i}(x+{\rm i}\gamma)}+1)},
\]
with $\deg(G)\le 3$ and $G(0)=0$. Moreover, because of evenness we must have \mbox{$w(x)=w(-x)$}, so~$G(X)=X^4G\big(X^{-1}\big)$. Up to scaling, such polynomials $G$ form a 1-parameter family, para\-met\-ri\-zed by $\alpha$.

Let us equip the corresponding quantum algebra $\mc A=\mc A_P$ with the quaternionic structure $\rho_+$ given by $\rho_+(v)=u$, $\rho_+(u)=v$, $\rho_+(z)=-z$, and let us determine which traces are unitary
for this quaternionic structure. According to Theorem~\ref{ThrPositiveFormSmallerThanOne}, there is a unique
such trace (which is automatically $\sigma$-stable), corresponding to $G(X)=X^2$. Thus this trace is given by the weight function
\[
w(x)=\frac{1}{\cos \pi (x-{\rm i}\beta )\cos\pi(x+{\rm i}\beta )\cos \pi (x-{\rm i}\gamma)\cos\pi(x+{\rm i}\gamma)}.
\]
Thus,
\[
T\big(z^{k}\big)=\int_{{\rm i}\mathbb R}\frac{x^{k}|{\rm d}x|}{\cos \pi (x-{\rm i}\beta )\cos\pi(x+{\rm i}\beta )\cos \pi (x-{\rm i}\gamma)\cos\pi(x+{\rm i}\gamma)},
\]
in particular, $T\big(z^k\big)=0$ if $k$ is odd.

As before, for even $k$ this integral can be computed using the residue formula. Namely, assume $\beta\ne 0$, $\gamma\ne 0$, $\beta\ne \pm \gamma$, and let us first compute $T(1)$. Replacing the contour ${\rm i}\mathbb R$ by $1+{\rm i}\mathbb R$ and subtracting, we find using the residue formula:
\begin{gather*}
T\big((z+1)^2\big)-T\big(z^2\big)=T(1)
\\ \hphantom{T\big((z+1)^2\big)-T\big(z^2\big)}
{}=-2\pi\big({\rm Res}_{\frac{1}{2}-{\rm i}\beta }x^2w+{\rm Res}_{\frac{1}{2}+{\rm i}\beta }x^2w+{\rm Res}_{\frac{1}{2}-{\rm i}\gamma}x^2w+{\rm Res}_{\frac{1}{2}+{\rm i}\gamma}x^2w\big).
\end{gather*}
Now,
\[
{\rm Res}_{\frac{1}{2}-{\rm i}\beta }x^2w+{\rm Res}_{\frac{1}{2}+{\rm i}\beta }x^2w=\frac{2\beta}{\pi \sinh \pi(\beta+\gamma)\sinh\pi(\gamma-\beta)\sinh 2\pi \beta}.
\]
Thus
\[
T(1)=\frac{1}{\sinh \pi(\beta+\gamma)\sinh\pi(\gamma-\beta)}\bigg(\frac{4\beta}{\sinh 2\pi \beta}-\frac{4\gamma}{\sinh 2\pi \gamma}\bigg).
\]
Note that this function is regular when $\beta\gamma(\beta-\gamma)(\beta+\gamma)=0$, and the corresponding limit is the answer in that case.

We similarly have
\begin{gather*}
T\big((z+1)^4\big)-T\big(z^4\big)=6T\big(z^2\big)+T(1)
\\ \hphantom{T\big((z+1)^4\big)-T\big(z^4\big)}
{}=-2\pi\big({\rm Res}_{\frac{1}{2}-{\rm i}\beta }x^4w+{\rm Res}_{\frac{1}{2}+{\rm i}\beta }x^4w+{\rm Res}_{\frac{1}{2}-i\gamma}x^4w+{\rm Res}_{\frac{1}{2}+i\gamma}x^4w\big),
\end{gather*}
and
\[
{\rm Res}_{\frac{1}{2}-{\rm i}\beta }x^4w+{\rm Res}_{\frac{1}{2}+{\rm i}\beta }x^4w=\frac{\beta-4\beta^3}{\pi \sinh \pi(\beta+\gamma)\sinh\pi(\gamma-\beta)\sinh 2\pi \beta}.
\]
Thus
\[
6T\big(z^2\big)+T(1)=\frac{2}{\sinh \pi(\beta+\gamma)\sinh\pi(\gamma-\beta)}\bigg(\frac{\beta-4\beta^3}{\sinh 2\pi \beta}-\frac{\gamma-4\gamma^3}{\sinh 2\pi \gamma}\bigg).
\]
Hence
\[
T\big(z^2\big)=\frac{1}{\sinh \pi(\beta+\gamma)\sinh\pi(\gamma-\beta)}\bigg(\frac{\gamma+4\gamma^3}{3\sinh 2\pi \gamma}-\frac{\beta+4\beta^3}{3\sinh 2\pi \beta}\bigg).
\]
Thus
\[
\alpha=-\frac{T\big(z^2\big)}{T(1)}=\frac{1}{12}+\frac{1}{3}\frac{\beta^3\sinh 2\pi \gamma-\gamma^3\sinh 2\pi \beta}{\beta\sinh 2\pi \gamma-\gamma\sinh 2\pi \beta}.
\]
This is the equation (in appropriate coordinates) of the surface computed numerically in~\cite{BPR} and shown in Fig.~4 of that paper.
In particular, for $\beta=\gamma=0$, we get
\[
\alpha=\frac{1}{12}-\frac{1}{2\pi^2}.
\]
Thus $\tau=128\alpha=\frac{32(\pi^2-6)}{3\pi^2}=4.18211{\dots}$
is the number given by the complicated expres\-sion~(B.16) of~\cite{BPR}
(as was pointed out in~\cite{DPY}).
\end{Example}

\begin{Remark} Similar calculations can be found in~\cite[Section~8.1]{DPY}.
\end{Remark}

\subsection{The case of a closed strip}
Suppose now that all roots $\alpha$ of $P$ satisfy $|\Re\alpha|\leq \frac{1}{2}$. Recall that we have $P(x)=P_*(x)Q\big(x+\frac{1}{2}\big)$ $\times Q\big(x-\tfrac{1}{2}\big)$ where all roots of $P_*(x)$ satisfy $|\Re x|<\frac{1}{2}$ and all roots of $Q(x)$ belong to ${\rm i}\RN$. For~any $R\in \CN[x]$ write $R=R_1Q+R_0$, where $\deg R_0<\deg Q$.

By Proposition~\ref{PropTracesForClosedStrip} each $g_t$-twisted trace can be obtained as
\[
T(R(z))=\int_{{\rm i}\RN}R_1(x)Q(x)w(x)|{\rm d}x|+\phi(R_0),
\]
where $w(x)={\rm e}^{2\pi {\rm i} cx}\frac{G({\rm e}^{2\pi {\rm i} x})}{\bP({\rm e}^{2\pi {\rm i}x})}$ and $\phi$ is any linear functional.

\begin{Proposition}
\label{PropPositiveFormNoPoles}
Suppose that $T$ is a trace as above and $w(x)$ has poles on ${\rm i}\RN$. Then $T$ does not give a positive definite form.
\end{Proposition}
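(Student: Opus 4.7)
The plan is to invoke Lemma~\ref{posde}(2): the form defined by $T$ is positive definite iff $T(R(z)\ovl{R}(-z))>0$ for all nonzero $R\in\CN[z]$ (together with the weight-$1$ condition). I will exhibit an $R$ of the form $R(z)=Q(z)R_1(z)$ violating the weight-$0$ inequality. For such an $R$, using that $Q$ has real coefficients (its roots, lying on ${\rm i}\RN$, are symmetric under $\gamma\mapsto-\gamma$) and $Q(-z)=(-1)^{\deg Q}Q(z)$, one checks $R(z)\ovl{R}(-z)=(-1)^{\deg Q}Q(z)^2 R_1(z)\ovl{R_1}(-z)$, which is divisible by $Q$. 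Hence the $R_0$ piece in the decomposition of Proposition~\ref{PropTracesForClosedStrip} vanishes, and
\[
T(R(z)\ovl{R}(-z))=\int_{{\rm i}\RN}R(x)\ovl{R}(-x)\,w(x)\,|{\rm d}x|=\int_\RN |Q({\rm i}y)|^2|R_1({\rm i}y)|^2 w({\rm i}y)\,{\rm d}y,
\]
since $R({\rm i}y)\ovl{R}(-{\rm i}y)=|R({\rm i}y)|^2=|Q({\rm i}y)|^2|R_1({\rm i}y)|^2$ on ${\rm i}\RN$. It thus suffices to exhibit a polynomial $R_1$ for which this integral is negative.

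Next I analyse the sign of the integrand near a pole $\gamma_0={\rm i}y_0$ of $w$ on ${\rm i}\RN$. Since $T$ is $\rho$-equivariant (cf.\ Proposition~\ref{PropQuaternionicTraces}), $w$ is real-valued on ${\rm i}\RN$ away from its poles (equivalently $w(x)=\ovl{w(-\ovl{x})}$, and $-\ovl{x}=x$ on ${\rm i}\RN$), which forces the coefficients of the Laurent expansion of $w({\rm i}y)$ in $y-y_0$ to be real. Treating the generic case of a simple zero of $Q$ and a simple pole of $w$ at $\gamma_0$, write $w({\rm i}y)=\beta/(y-y_0)+O(1)$ with $\beta\in\RN\setminus\{0\}$ and $|Q({\rm i}y)|^2=(y-y_0)^2 H(y)$ with $H(y_0)>0$. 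Then
\[
|Q({\rm i}y)|^2 w({\rm i}y)=\beta H(y_0)(y-y_0)+O\bigl((y-y_0)^2\bigr)
\]
near $y_0$, so this function is continuous at $y_0$ and changes sign there; consequently, on an interval $I$ on one side of $y_0$ we have $|Q({\rm i}y)|^2 w({\rm i}y)<0$. Moreover, $|Q|^2 w$ is real, continuous, and exponentially decaying on $\RN$, since $Qw$ is bounded and exponentially decaying on ${\rm i}\RN$ while $|Q|$ grows only polynomially.

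Pick a continuous $b\colon\RN\to\RN_{\ge 0}$ supported in $I$ with $\int_\RN b(y)|Q({\rm i}y)|^2 w({\rm i}y)\,{\rm d}y<0$. Choose a positive weight $v(y)$ on $\RN$, exponentially decaying and satisfying the hypotheses of Lemma~\ref{LemClassicalDense}, such that $||Q({\rm i}y)|^2 w({\rm i}y)|/v(y)$ is bounded on $\RN$ (possible because $|Q|^2 w$ decays exponentially while $v$ can be chosen to decay more slowly). By Lemma~\ref{CorClassical}(2) with $M=1$, squared polynomials $|p|^2$ are dense in the cone of a.e.\ non-negative functions of $L^1(\RN,v\,{\rm d}y)$, so one can choose $p_n\in\CN[y]$ with $|p_n|^2\to b$ in $L^1(\RN,v\,{\rm d}y)$. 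Setting $R_1(z):=p_n(z/{\rm i})$ (so $R_1({\rm i}y)=p_n(y)$) and $R:=QR_1$ gives $T(R(z)\ovl{R}(-z))\to\int b(y)|Q({\rm i}y)|^2 w({\rm i}y)\,{\rm d}y<0$ as $n\to\infty$, so for large $n$ this trace is negative, contradicting positive definiteness. The principal technical obstacle is the density step---selecting $v$ to satisfy simultaneously the exponential-decay hypothesis of Lemma~\ref{LemClassicalDense} and the boundedness of $||Q|^2 w|/v$---and the extension to higher-order poles, which requires examining the leading real Laurent coefficient of $|Q|^2 w$ to locate an interval where this real analytic function is negative.
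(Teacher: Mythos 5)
There is a genuine gap. Your argument tests positivity only against elements $R=QR_1$, for which $R_0=0$ and
\[
T\big(R(z)\ovl{R}(-z)\big)=\int_{\RN}|Q({\rm i}y)|^2|R_1({\rm i}y)|^2\,w({\rm i}y)\,{\rm d}y ,
\]
and you then try to make this negative by locating an interval on which $|Q|^2w<0$. This works exactly when the pole forces $|Q|^2w$ to change sign (odd-order pole, or even-order pole with negative leading coefficient) --- but that part of the statement is just the assertion that $w$ must be nonnegative on ${\rm i}\RN$, which is only the first step of the paper's proof. The hard case, which your method cannot reach, is a pole of even order $M\ge 2$ with positive leading Laurent coefficient: there $w\ge 0$ near the pole, $|Q|^2w\ge 0$ on all of ${\rm i}\RN$, and \emph{every} integral $\int|R_1Q|^2w$ is nonnegative, so no choice of $R_1$ (nor any limit of such) produces a negative value. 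Your closing remark that one need only ``locate an interval where this real analytic function is negative'' presupposes such an interval exists; in this case it does not. This case genuinely occurs (e.g.\ a double root of $Q$ producing a double pole of $w$ with positive coefficient), so the proposition is not proved.

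The paper's proof handles it by testing against $R=F_nQ_*+b$ with $Q_*(x)=Q(x)\ovl{Q}(-x)$, $F_n$ approximating a characteristic function $\chi_{(-\eps,\eps)}$ near the pole, and $b$ a free real constant, so that the functional $\psi(R_0)$ contributes $Cb^2$ and the cross term $2b\int fQ_*w$ appears. Because $Q_*w$ vanishes to order $N-M$ while $Q_*^2w$ vanishes to order $2N-M$ at the pole, the resulting quadratic in $b$ has discriminant $4\eps^{2N-2M+2}\big(c_1^2-Cc_2\eps^{M-1}\big)$, which is positive for small $\eps$ precisely because $M\ge 2$; hence $T$ takes a negative value. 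This interplay between the constant term $b$, the functional $\psi$, and the pole order is the essential idea missing from your proposal. (A minor additional slip: the roots of $Q$ lie on ${\rm i}\RN$, which does not make $Q$ real --- e.g.\ $Q(x)=x-{\rm i}$ --- so $R(z)\ovl{R}(-z)=Q(z)\ovl{Q}(-z)R_1(z)\ovl{R_1}(-z)$ rather than $(-1)^{\deg Q}Q^2R_1\ovl{R_1}(-\cdot)$; your final integral formula is nonetheless correct since $Q(x)\ovl{Q}(-x)=|Q({\rm i}y)|^2$ on ${\rm i}\RN$.)
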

\begin{proof}
Let $Q_*(x)=Q(x)\ovl{Q}(-x)$; note that $Q_*(x)\ge 0$ for $x\in {\rm i}\mathbb R$. Then there exists a linear functional $\psi$ such that for any $R=R_1Q_*+R_0$ with $\deg R_0<\deg Q_*$ we have
\[
T(R(z))=\int_{{\rm i}\RN}R_1(x)Q_*(x)w(x)|{\rm d}x|+\psi(R_0).
\]

Suppose that $T$ gives a positive definite form. Then $T(S(z)\ovl{S}(-z))>0$ for all nonzero \mbox{$S\in\CN[x]$}. Taking $S(x)=Q_*(x)S_1(x)$ and using Lemma~\ref{CorClassical}, we deduce that $Q_*^2(x)w(x)$, hence~$w(x)$, is nonnegative on ${\rm i}\RN$. In particular, all poles of $w(x)$ have order at least $2$.

Without loss of generality assume that $w(x)$ has a pole at zero. Let
\[
R_n(x):=(F_nQ_*+b)\big(\ovl{F_n}Q_*+b\big),
\]
where $b\in \RN$. Suppose that $F_n$ is a sequence of polynomials that tends to the function $f:=\chi_{(-\eps,\eps)}$ (the characteristic function of the interval) in the space $L^{2}\big({\rm i}\RN, (Q_*+Q_*^2\big)w)$. In particular, $F_n$~tends to $f$ in the spaces $L^2({\rm i}\RN,Q_* w)$ and $L^2\big({\rm i}\RN,Q_*^2 w\big)$. Then we deduce from the Cauchy--Schwartz inequality that $F_n\ovl{F_n}$ tends to $f^2$ in the space $L^1\big({\rm i}\RN,Q_*^2w\big)$, and $F_n$ and $\ovl{F_n}$ tend to $f$ in $L^1({\rm i}\RN,Q_*w)$.

We have
\begin{gather*}
T(R_n(z))=T\big(\big(F_n\ovl{F_n}Q_*+F_nb+\ovl{F_n}b\big)(z)Q_*(z)+b^2\big)
\\ \hphantom{T(R_n(z))}
{}=\int_{{\rm i}\RN}\big(F_n\ovl{F_n}Q_*^2+F_nbQ_*+\ovl{F_n}bQ_*\big)w|{\rm d}x|+\phi\big(b^2\big).
\end{gather*}
Therefore, when $n$ tends to infinity,
\[
T(R_n(z))\to \int_{{\rm i}\RN}\big(f^2Q_*^2+2fbQ_*\big)w|{\rm d}x|+\phi\big(b^2\big).
\]
We have $\phi\big(b^2\big)=Cb^2$ for some $C\geq 0$. Suppose that $w$ has a pole of order $M\geq 2$ at $0$ and $Q_*$ has a zero of order $N>0$ at $0$. Then $Q_*w$ has a zero of order $N-M$ at zero and $Q_*^2w$ has a~zero of order $2N-M$ at zero. We~deduce that
\[
\int_{{\rm i}\RN}F_nQ_*w|{\rm d}x|\to c_1\eps^{N-M+1},\qquad \int_{{\rm i}\RN}F_nQ_*^2w|{\rm d}x|\to c_2\eps^{2N-M+1},\qquad n\to \infty,
\]
 where $c_1=c_1(\varepsilon)$, $c_2=c_2(\varepsilon)$ are functions having strictly positive limits at $\varepsilon=0$ . Therefore
 \[
 \lim_{n\to\infty}T(R_n(z))=Cb^2+2c_1\eps^{N-M+1}b+c_2\eps^{2N-M+1}.
 \]
 This is a quadratic polynomial of $b$ with discriminant
 \[
 D=4\eps^{2N-2M+2}\big(c_1^2-Cc_2\eps^{M-1}\big).
 \]
 Since $M{\geq }2$, for small $\eps$ this discriminant is positive. In particular, for some $b$,
 $ \lim\limits_{n\to\infty}T(R_n(z)){<}0,$
 so for this $b$ and some $n$, $T(R_n(z))<0$, a contradiction.
\end{proof}

Now we are left with the case when $w(x)$ has no poles on ${\rm i}\RN$. In this case $T(R(z))=\int_{{\rm i}\RN}R(x)w(x)|{\rm d}x|+\eta(R_0)$, where $\eta$ is some linear functional.
\begin{Proposition}
\label{PropIfPositiveThenNoDerivatives}
$T$ gives a positive definite form only when $\eta(R_0)=\sum_{j}c_j R_0(z_j)$, where $c_j\geq 0$ and $z_j\in {\rm i}\mathbb R$ are the roots of $Q$.
\end{Proposition}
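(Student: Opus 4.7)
The plan is to analyze $\eta$ one root at a time. Using the Chinese Remainder isomorphism $\CN[x]/Q(x)\cong\bigoplus_j\CN[x]/(x-z_j)^{m_j}$, decompose $\eta=\sum_j\eta_j$ where $\eta_j(R_0)=\sum_{k=0}^{m_j-1}c_{j,k}R_0^{(k)}(z_j)$, reducing the task to showing $c_{j,k}=0$ for all $k\geq 1$ and $c_{j,0}\geq 0$. First I would note that the Hermitian condition $T(\rho(a))=\ovl{T(a)}$, together with $\rho(z)=-z$ and $\ovl{-z_j}=z_j$ for $z_j\in{\rm i}\RN$, forces $c_{j,k}\in\RN$ for $k$ even and $c_{j,k}\in{\rm i}\RN$ for $k$ odd (via the identity $\ovl{R}(-\cdot)^{(k)}(z_j)=(-1)^k\ovl{R^{(k)}(z_j)}$).

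To kill the higher-order coefficients, fix $j$ and proceed by downward induction on $k\geq 1$: assume $c_{j,l}=0$ for $l>k$ and suppose for contradiction that $c_{j,k}\neq 0$. Let $e_j$ be a Lagrange--Hermite polynomial with $e_j\equiv 1\bmod(x-z_j)^{m_j}$ and $e_j\equiv 0\bmod(x-z_i)^{m_i}$ for $i\neq j$, and consider the family $S_n=F_n+be_j$ with parameter $b\in\RN$, where $F_n$ is chosen with Taylor jet at $z_j$ vanishing up to order $m_j-1$ except in degree $k$, where it equals $k!V$ (with $V=1$ for $k$ even, $V={\rm i}$ for $k$ odd), vanishing to order $\geq m_i$ at every other root $z_i$ of $Q$, and with $\int_{{\rm i}\RN}|F_n|^2w\,|dx|\to 0$. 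Expanding $T(S_n\ovl{S_n}(-z))$ as a quadratic in $b$ yields
\[
T(S_n\ovl{S_n}(-z))=A\,b^2+B(n)\,b+C(n),
\]
with $A=c_{j,0}+\int|e_j|^2w\,|dx|>0$ (by positivity applied to $S=e_j$), $B(n)\to B_\infty:=c_{j,k}\cdot k!\,(V+(-1)^k\ovl V)$, a nonzero real number, and $C(n)\to 0$: the integral $\int|F_n|^2w$ vanishes by construction, the Cauchy--Schwarz cross term $\Re\int F_n\ovl{e_j}w$ vanishes, the $\eta_i$-contributions at $i\neq j$ vanish since $S_n$ vanishes to order $\geq m_i$ at $z_i$, and the only remaining $b$-independent $\eta_j$ contribution $c_{j,2k}(2k)!(-1)^k|V|^2$ vanishes either by the induction hypothesis if $2k<m_j$ or automatically if $2k\geq m_j$. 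The limiting quadratic $A\,b^2+B_\infty\,b=b(Ab+B_\infty)$ is strictly negative for $b$ strictly between $0$ and $-B_\infty/A$, contradicting positivity of $T$ for large $n$; hence $c_{j,k}=0$.

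Once only the $c_{j,0}$'s remain, I would show $c_{j,0}\geq 0$ by taking, for each fixed $j$, $S_n(x)=\prod_{i\neq j}(x-z_i)^{m_i}P_n(x)/\prod_{i\neq j}(z_j-z_i)^{m_i}$ with $P_n(z_j)=1$ and $\int|S_n|^2w\,|dx|\to 0$: then $S_n(z_j)=1$, $S_n(z_i)=0$ for $i\neq j$, and $T(S_n\ovl{S_n}(-z))\to c_{j,0}$, so positivity forces $c_{j,0}\geq 0$.

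The main technical ingredient, and the main obstacle, is the construction of polynomial sequences $F_n$ and $P_n$ with prescribed jet conditions at $z_j$ and $L^2(w)$-norm tending to $0$. This rests on the fact that, for an absolutely continuous weight such as $\tilde w(y)=(y-y_j)^{2k}\prod_{i\neq j}|{\rm i}y-z_i|^{2m_i}w({\rm i}y)$ without an atom at $y_j$, the Christoffel function $\lambda_n(y_j)$ for polynomials of degree $\leq n$ tends to infinity; combining this with the density of polynomials from Lemma~\ref{LemClassicalDense} lets one impose finitely many linear constraints at $y_j$ (one for each prescribed jet coefficient) while still driving the $L^2$ norm to $0$.
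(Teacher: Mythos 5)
Your argument is correct in substance, but it takes a genuinely different (and heavier) route than the paper. The paper's proof compresses your entire root-by-root, coefficient-by-coefficient induction into the single finite-dimensional claim that if $\eta$ is not of the stated form then there is a polynomial $S$ with $\eta\big((S\ovl{S})_0\big)<0$ (your jet computations are essentially a careful verification of that claim, so this part of your write-up is a useful expansion of the paper's ``it is easy to find $S$''). The real divergence is in the analytic step. The paper observes that by Lemma~\ref{CorClassical}(1) the set $Q\cdot\CN[x]$ is dense in $L^2({\rm i}\RN,w)$, so one can find $F_n$ with $\|F_nQ+S\|_{L^2(w)}\to 0$ while $F_nQ+S\equiv S \pmod{Q}$; since the $\eta$-term depends only on the residue mod $Q$ (equivalently, on the jets at the roots of $Q$ up to order $m_i-1$), this kills the integral contribution without disturbing $\eta$. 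This is exactly the construction you flag as ``the main obstacle'': prescribing the jets of a polynomial at the roots of $Q$ while driving its $L^2(w)$-norm to $0$ is the same as prescribing its coset mod $Q$, so your $F_n$ can be taken as $J+QG_n$ with $J$ a fixed Hermite interpolant of the desired jet and $QG_n\to -J$ in $L^2(w)$ --- a one-line consequence of Lemma~\ref{CorClassical}(1), with no need for Christoffel-function asymptotics. If you do want to argue via Christoffel functions, note that your statement has the limit backwards: the minimal $L^2$-norm $\lambda_n(y_j)=\min\{\|\pi\|^2_{L^2(\tilde w)}\colon \deg\pi\le n,\ \pi(y_j)=1\}$ tends to $0$ (equal to the mass of the atom at $y_j$, here zero), equivalently $\sum_{k\le n}|p_k(y_j)|^2\to\infty$; and this result for determinate measures is an external input you would need to cite or prove (determinacy does hold here by Carleman's condition, given the exponential decay of $w$). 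With that correction, or better with the substitution of Lemma~\ref{CorClassical}(1) for the Christoffel-function argument, your proof is complete; what it buys over the paper's is an explicit identification of which test polynomials detect each offending coefficient $c_{j,k}$, at the cost of considerably more bookkeeping.
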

\begin{proof}
Suppose that this is not the case. Then it is easy to find a polynomial $S$ such that $\eta\big(\big(S\ovl{S}\big)_0\big)<0$. Then using Lemma~\ref{CorClassical}(2) for $M=Q$, we find $F_n$ such that $F_nQ+S$ tends to zero in $L^2({\rm i}\RN,w)$. We~deduce that
\[
T\big((F_nQ+S)(z)\ovl{(F_nQ+S)}(z)\big)\to \eta\big((S\ovl{S})_0\big)<0,
\]
which gives a contradiction.
\end{proof}

In the proof of Proposition~\ref{PropPositiveFormNoPoles} we got that $w$ is nonnegative on ${\rm i}\RN$. We~also note that $Q(z)$ divides $P\big(z-\tfrac12\big)$, hence
\[
T\big(R\big(z-\tfrac12\big)\ovl{R}\big(\tfrac12-z\big)P\big(z-\tfrac12\big)\big)
=\int_{{\rm i}\RN}R\big(z-\tfrac12\big)\ovl{R}\big(\tfrac12-z\big)P\big(z-\tfrac12\big) w(z) |dz|.
\]
Using the proof of Proposition~\ref{PropFromPositiveTraceToFunctions} we see that $\lambda P(z)w\big(z+\tfrac12\big)$ is nonnegative on ${\rm i}\RN$. Assume that $Q\big(z-\tfrac12\big)Q\big(z+\tfrac12\big)$ is positive on $\RN$. Then this is equivalent to $\lambda P_*(z)w\big(z+\tfrac12\big)$ being nonnegative on ${\rm i}\RN$. So~we have proved the following theorem.

\begin{Theorem}
\label{ThrPositiveDefiniteFormsClosedStrip}
Suppose that $P(x)=P_*(x)Q\big(x-\tfrac{1}{2}\big)Q\big(x+\frac{1}{2}\big)$, where all roots of $P_*$ belong to the set $|\Re x|<\frac{1}{2}$ and all roots of $Q$ belong to ${\rm i}\RN$. Suppose that $\alpha_1,\ldots,\alpha_k$ are all the different roots of $Q$. Then positive traces $T$ are in one-to-one correspondence with $\widetilde{T}$, $c_1,\ldots, c_k\geq 0$, where $\widetilde{T}$ is a positive trace for $P_*$; namely, \[T(R(z))=\widetilde{T}(R(z))+\sum c_i R(\alpha_i).\]
\end{Theorem}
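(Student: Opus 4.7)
The proof establishes a bijection, so I would prove both directions, relying on Propositions~\ref{PropPositiveFormNoPoles} and~\ref{PropIfPositiveThenNoDerivatives} for the forward direction and on the trace/positivity characterizations for the converse.

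\textbf{Forward direction.} Start with a positive $g_t$-twisted trace $T$ written as in Proposition~\ref{PropTracesForClosedStrip}. By Proposition~\ref{PropPositiveFormNoPoles}, $w(x)$ has no poles on ${\rm i}\RN$, so $\int_{{\rm i}\RN} R(x)w(x)|{\rm d}x|$ converges for every polynomial $R$, and after absorbing the difference into the functional on remainders we may rewrite $T(R(z)) = \int_{{\rm i}\RN} R(x)w(x)|{\rm d}x| + \eta(R_0)$. Proposition~\ref{PropIfPositiveThenNoDerivatives} forces $\eta(R_0)=\sum_i c_i R_0(\alpha_i)$ with $c_i\ge 0$; since $R(\alpha_i)=R_0(\alpha_i)$ (as $Q(\alpha_i)=0$), we obtain the desired decomposition $T=\widetilde T + \sum c_i \mathrm{ev}_{\alpha_i}$ with $\widetilde T(R(z)) := \int_{{\rm i}\RN} R(x)w(x)|{\rm d}x|$. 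I then need to show $\widetilde T$ is a positive trace for $P_*$.

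\textbf{That $\widetilde T$ is a $g_t$-twisted trace for $P_*$} is checked by the contour argument of Proposition~\ref{PropTracesForOpenStrip}: it reduces to $P_*(x)w(x+\tfrac12)$ being holomorphic on $|\Re x|\le \tfrac12$. I already know $P(x)w(x+\tfrac12)$ is holomorphic there (since $T$ is a trace for $P$), and $P(x)=P_*(x)Q(x+\tfrac12)Q(x-\tfrac12)$; the possible zeros of the denominator $Q(x\pm\tfrac12)$ on the boundary $\Re x=\mp\tfrac12$ are matched exactly by the factors $Q(x\mp\tfrac12)$ in $P(x)$ (with $w(x+\tfrac12)=tw(x-\tfrac12)$ finite on ${\rm i}\RN$ by the no-poles conclusion), so $P_*(x)w(x+\tfrac12)$ has removable singularities only. \textbf{Positivity of $\widetilde T$ on $\mc A_{P_*}$} then follows from Lemma~\ref{posde}: the condition $\widetilde T(R(z)\ovl R(-z))>0$ reduces to $w\ge 0$ on ${\rm i}\RN$ (established in the proof of Proposition~\ref{PropPositiveFormNoPoles}), and by the contour manipulation from Proposition~\ref{PropFromPositiveTraceToFunctions} the condition $\lambda \widetilde T(R(z-\tfrac12)\ovl R(\tfrac12-z)P_*(z-\tfrac12))>0$ reduces to $\lambda P_*(x)w(x+\tfrac12)\ge 0$ on ${\rm i}\RN$; this follows by dividing $\lambda P(x)w(x+\tfrac12)\ge 0$ (known from Proposition~\ref{PropFromPositiveTraceToFunctions} applied to $T$) by the assumed positive factor $Q(x+\tfrac12)Q(x-\tfrac12)$.

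\textbf{Converse.} Given $\widetilde T$ positive for $P_*$ and $c_i\ge 0$, I must verify that $T=\widetilde T+\sum c_i \mathrm{ev}_{\alpha_i}$ is a positive $g_t$-twisted trace on $\mc A_P$. For the trace identity, $S(z-\tfrac12)P(z-\tfrac12)-tS(z+\tfrac12)P(z+\tfrac12)$ factors, and at each $\alpha_i$ the point evaluation annihilates it because $P(\alpha_i\pm\tfrac12)$ contains the factor $Q(\alpha_i)=0$; the $\widetilde T$-part vanishes by substituting $\widetilde S(y):=S(y)Q(y-\tfrac12)Q(y+\tfrac12)$ into the $P_*$-trace identity for $\widetilde T$, which produces exactly the needed polynomial $Q(z)\cdot[S(z-\tfrac12)P_*(z-\tfrac12)Q(z-1)-tS(z+\tfrac12)P_*(z+\tfrac12)Q(z+1)]$. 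For positivity, Lemma~\ref{posde} reduces to checking two inequalities: since $\alpha_i\in {\rm i}\RN$, one has $R(\alpha_i)\ovl R(-\alpha_i)=|R(\alpha_i)|^2\ge 0$, and $P(\alpha_i-\tfrac12)=0$ makes the weight-$1$ point-evaluation terms drop out, so positivity of $T$ follows from positivity of $\widetilde T$ on $\mc A_{P_*}$ combined with the nonnegativity of $Q(x+\tfrac12)Q(x-\tfrac12)$ on ${\rm i}\RN$.

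\textbf{Main obstacle.} The delicate step is Step~4 above: showing that $P_*(x)w(x+\tfrac12)$ extends holomorphically to the closed strip, because this requires tracking the exact orders of zeros of $P(x)w(x+\tfrac12)$ versus poles of $[Q(x+\tfrac12)Q(x-\tfrac12)]^{-1}$ at boundary points ${\pm\tfrac12}+{\rm i}\alpha_j$ when $Q$ has higher-multiplicity roots. The matching argument requires using both that $w$ has no poles on ${\rm i}\RN$ (from Proposition~\ref{PropPositiveFormNoPoles}) and the explicit factorization of $P$ through $Q$, and a parallel bookkeeping underlies the reduction of the weight-$1$ positivity in Step~5.
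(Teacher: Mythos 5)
Your proposal is correct and follows essentially the same route as the paper's: the forward direction rests on Propositions~\ref{PropPositiveFormNoPoles} and~\ref{PropIfPositiveThenNoDerivatives} together with the reduction of the weight-one positivity condition to $\lambda P_*(x)w\big(x+\tfrac12\big)\ge 0$ obtained by dividing $\lambda P(x)w\big(x+\tfrac12\big)\ge 0$ by $Q\big(x+\tfrac12\big)Q\big(x-\tfrac12\big)$, and your converse just fills in verifications the paper leaves implicit. Two small remarks: your ``delicate'' holomorphy step is actually immediate, since $w(x+1)=tw(x)$ plus the absence of poles on ${\rm i}\RN$ means $w$ is literally an open-strip weight function for $P_*$, so Proposition~\ref{PropTracesForOpenStrip} applies directly; and the factor $Q\big(x+\tfrac12\big)Q\big(x-\tfrac12\big)=\prod_l\big((x-\beta_l)^2-\tfrac14\big)$ has constant sign $(-1)^{\deg Q}$ on ${\rm i}\RN$ rather than being automatically positive, so for odd $\deg Q$ the conjugation on $\mc A_{P_*}$ must be twisted by a sign --- a point the paper itself glosses over here (``Assume that \dots is positive'') and only addresses in the general case via $\lambda_\circ$.
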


\subsection{The general case}
Let $\mc{A}$ is be a filtered quantization of $A$ with conjugation $\rho$ given by
formula \eqref{rho}. Let~$P(x)$ be its parameter. Let~$\widetilde{P}(x)$ be the polynomial defined in Section~\ref{SubSubSecGeneralTraces}: it has the same degree as~$P(x)$ and its roots are obtained from the roots of $P(x)$ by minimal integer shift into the strip $|\Re x|\leq \frac{1}{2}$.
Also recall from Section~\ref{SubSubSecGeneralTraces}
that $P_\circ$ denotes the following polynomial:
all roots of~$P_\circ$ belong to strip $|\Re x|\leq \frac{1}{2}$ and the multiplicity of $\alpha, |\Re\alpha|\leq \frac{1}{2}$ in $P_\circ$ equals to multiplicity of~$\alpha$ in~$P$. Let~$n_\circ:=\deg(P_\circ)$.

Proposition~\ref{PropGeneralTrace} says that any trace $T$ can be represented as $T=\Phi+\widetilde{T}$, where $\Phi$ is a linear functional such that
\[
\Phi(R)=\sum_{j=1}^m\sum_k c_{jk} R^{(k)}(z_j),
\]
 $z_j\notin {\rm i}\RN$, and $\widetilde{T}$ is a trace for $\widetilde{P}$. Furthermore, if $\Phi=0$ then $T$ is just a trace for $P_\circ$.

\begin{Proposition}
\label{PropPositivePhiIsZero}
Let $T$ be a trace such that $\Phi\neq 0$. Then $T$ does not give a positive definite form.
\end{Proposition}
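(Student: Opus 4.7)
The plan is to produce a nonzero $R \in \CN[z]$ with $T(R(z)\bar R(-z)) < 0$; by Lemma \ref{posde}(2) this will rule out positive definiteness. The strategy exploits the geometric separation between the support of $\Phi$ (finitely many points $z_j$ off the imaginary axis) and that of $\widetilde T$ (the weight function on ${\rm i}\RN$ plus finitely many point masses $\alpha_k \in {\rm i}\RN$ from Theorem \ref{ThrPositiveDefiniteFormsClosedStrip}). I will first find a ``bad direction'' $L$ with $\Phi(L\bar L(-\cdot)) < 0$, and then construct polynomials matching $L$'s jets at the $z_j$ but arbitrarily small on ${\rm i}\RN$.

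Since $T$ is $\rho$-equivariant and the decomposition in Proposition \ref{PropGeneralTrace} can be chosen $\rho$-equivariantly (averaging $\widetilde T$ and $\Phi$ under $\rho$ preserves their form), both $\widetilde T$ and $\Phi$ are $\rho$-equivariant; in particular the support of $\Phi$ is closed under $z_j \mapsto -\bar z_j$, with matching coefficients. Define the real-valued Hermitian form $Q(R) := \Phi(R(z)\bar R(-z))$ on $\CN[z]$; reality follows from $\rho$-invariance of $R(z)\bar R(-z)$. In the prototypical single-pair case $\Phi(S) = c S(z_0) + \bar c S(-\bar z_0)$ with $z_0 \notin {\rm i}\RN$, one computes
\[
Q(R) = 2\Re\bigl(c\,R(z_0)\overline{R(-\bar z_0)}\bigr),
\]
which realizes every real value since $z_0 \neq -\bar z_0$ allows Hermite interpolation to prescribe $R(z_0)$ and $R(-\bar z_0)$ independently. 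The general case (higher derivatives, several pairs) reduces to this by taking $R$ whose jets vanish at all support points except one pair. Finally, multiplying a candidate by $\prod_k (x - \alpha_k)$ preserves the sign of $Q$, so we can fix $L$ with $Q(L) < 0$ and additionally $L(\alpha_k) = 0$ for every $\alpha_k$.

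For the approximation step, let $E(x)$ vanish to order exceeding the top derivative appearing in $\Phi$ at each $z_j$ and each $-\bar z_j$, and moreover absorb any poles of the integral-part weight $\widetilde w$ on ${\rm i}\RN$ from the closed-strip case (i.e., include the factor $Q(x+\tfrac12)Q(x-\tfrac12)$ from the decomposition of $\widetilde P$). Then $E$ is nonvanishing on ${\rm i}\RN$ and $L/E$ is continuous of at most polynomial growth there. Apply Lemma \ref{CorClassical}(1) with an exponentially decaying weight $W$ dominating $|E|^2\widetilde w$ on ${\rm i}\RN$ to approximate $L/E$ by polynomials $F_n$ in the corresponding weighted $L^2$, and then correct each $F_n$ by a small polynomial (whose existence uses that evaluation at $\alpha_k$ is discontinuous on $L^2({\rm i}\RN, W)$, so its kernel is dense) to force $(L - EF_n)(\alpha_k) = 0$. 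Set $R_n := L - EF_n$. By construction $R_n^{(k)}(z_j) = L^{(k)}(z_j)$ for all relevant $(j,k)$ and $R_n(\alpha_k) = 0$, so $Q(R_n) = Q(L) < 0$, while $\|R_n\|_{L^2({\rm i}\RN, \widetilde w)} \to 0$ and all point-mass contributions vanish, giving $\widetilde T(R_n\bar R_n(-\cdot)) \to 0$. Hence $T(R_n\bar R_n(-\cdot)) \to Q(L) < 0$, a contradiction for large $n$.

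The main obstacle will be the simultaneous approximation with interpolation constraints when $\widetilde T$ involves both an $L^2$-integral (possibly with a singular weight on ${\rm i}\RN$ in the closed-strip case) and point masses: one must choose $E$ carefully to simultaneously kill the $\Phi$-derivatives, absorb the weight singularities, and leave $L/E$ smooth on ${\rm i}\RN$, and show that the density of polynomials in the correct weighted $L^2$-space survives these adjustments; this is precisely what Lemma \ref{CorClassical} enables.
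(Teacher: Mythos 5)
Your overall strategy coincides with the paper's: first exhibit a ``bad direction'' $L$ with $\Phi\big(L\ovl{L}(-\cdot)\big)<0$ (the paper's inner Lemma does exactly your computation, pairing $z_j$ with $z_{j^*}=-\ovl{z_j}$ and isolating the top derivative order $r$), then approximate $L$ in a weighted $L^2$ on ${\rm i}\RN$ by polynomial multiples of a fixed polynomial annihilating $\Phi$, so that $T$ of the Hermitian square of the error tends to the negative value. In the case $Q=1$ (no roots of $\widetilde P$ on the boundary of the strip) your argument is complete and is essentially the paper's proof.

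The closed-strip bookkeeping, however, has concrete problems as written. First, the poles of $w$ on ${\rm i}\RN$ sit at the roots of $Q(x)$, which lie on ${\rm i}\RN$ --- not at the roots of $Q\big(x\pm\tfrac12\big)$, which do not --- so the factor you propose to put into $E$ does not absorb them; and once you include the correct factor $Q(x)$, $E$ necessarily vanishes on ${\rm i}\RN$, contradicting your assertion that $E$ is nonvanishing there and that $L/E$ is continuous. (This is precisely why the paper applies Lemma~\ref{CorClassical}(1), which tolerates finitely many zeros of $H$, directly with weight $Q_*w$ instead of dividing by $E$.) Second, at this point in the argument the functional on the remainder $R_0$ (with $\deg R_0<\deg Q$) is an \emph{arbitrary} linear functional --- Proposition~\ref{PropIfPositiveThenNoDerivatives} has not yet been applied --- so it may involve derivatives up to order $m(\alpha_k)-1$ at a multiple root $\alpha_k$ of $Q$; forcing only $R_n(\alpha_k)=0$, hence second-order vanishing of $R_n\ovl{R_n}(-\cdot)$, does not make that term vanish in general, and since the functional is not $L^2$-continuous it need not tend to $0$ either. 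The paper kills both issues at once by taking $U$ divisible by $Q_*(x)=Q(x)\ovl{Q}(-x)$ and testing $T$ on $Q_*H_n=[Q(G_n-L)]\cdot\ovl{[Q(G_n-L)]}(-\cdot)$, so the remainder is identically zero; you should likewise build $Q$, to its full multiplicity, into both $L$ and $E$ rather than imposing simple vanishing at the $\alpha_k$ a posteriori. With these corrections your proof reduces to the paper's.
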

\begin{proof}
For big enough $k$ we have $\Phi((x-z_1)^k\cdots (x-z_m)^k \CN[x])=0$. Recall that there exists polynomial $Q_*(x)$ nonnegative on ${\rm i}\RN$ such that for $R=R_1Q_*+R_0$, $\deg R_0<\deg Q_*$, we have $\widetilde{T}(R)=\int_{{\rm i}\RN} R_1Q_* w |{\rm d}x|+\psi(R_0)$, where $\psi$ is some linear functional. Let~$U(x)$ be a polynomial divisible by $Q_*$ such that $\Phi(U(x)\CN[x])=0$.

Let $L$ be any polynomial. Using Lemma~\ref{CorClassical} for $M=U$, we find a sequence $G_n=US_n$ that tends to $L$ in the space $L^2({\rm i}\RN, Q_*w|{\rm d}x|)$. We~deduce that $H_n(x):=(G_n(x)-L(x))(\ovl{G_n}(-x)-\ovl{L}(-x))$ tends to zero in $L^1({\rm i}\RN,Q_*w)$. We~have $\widetilde{T}(H_n(z)Q_*(z))=\int_{{\rm i}\RN}H_n(x) Q_* w|{\rm d}x|$. We~conclude that $\widetilde{T}(H_n(z)Q_*(z))=\|H_n\|_{L^1({\rm i}\RN,Q_*w)}$ tends to zero when $n$ tends to infinity.

It follows that $T(H_n(z))$ tends to $\Phi(Q_*(x)H_n(x))=\Phi\big(Q_*(x)L(x)\ovl{L}(-x)\big)$. Since $H_n$ is nonnegative on ${\rm i}\RN$, we have $T(H_n(z))>0$. Now we get a contradiction with
\begin{Lemma}
There exists $F(x)\in\CN[x]$ such that $\Phi\big(Q_*(x)F(x)\ovl{F}(-x)\big)<0$.
\end{Lemma}
\begin{proof}
Let $r$ be the biggest number such that there exists $j$ with $c_{jr}\neq 0$. Let
\[
F(x):=G(x)(x-z_1)^{r+1}\cdots(x-z_j)^{r}\cdots (x-z_m)^{r+1}.
\]
Here we omit $x-z_{{j^*}}$ in the product, where ${j^*}\ne j$ is such that $z_{{j^*}}=-\ovl{z_j}$. We~note that $c_{ik} \big(Q_*(x)F(x)\ovl{F}(-x)\big)^{(k)}(z_i)=0$ for all $i,k$ except $k=r$ and $i=j$ or $i={j^*}$. It follows that
\begin{gather*}
\Phi\big(Q_*(x)F(x)\ovl{F}(-x)\big)=
c_{jr}\big(Q_*(x)F(x)\ovl{F}(-x)\big)^{(r)}(z_j)+c_{{j^*}r} \big(Q_*(x)F(x)\ovl{F}(-x)\big)^{(r)}(z_{{j^*}})
\\ \hphantom{\Phi\big(Q_*(x)F(x)\ovl{F}(-x)\big)}
{}=c_{jr} Q_*(z_j)F^{(r)}(z_j)\ovl{F}(-z_j)+(-1)^r c_{{j^*}r}Q_*(z_{{j^*}})F(z_{{j^*}})\ovl{F}^{(r)}(-z_{{j^*}})
\\ \hphantom{\Phi\big(Q_*(x)F(x)\ovl{F}(-x)\big)}
{}=c_{jr}a+c_{{j^*}r}\ovl a,
\end{gather*}
where $a:= Q_*(z_{j})F^{(r)}(z_{j})\ovl{F}(-z_{j})$. Pick $a\in \mathbb C$ so that
\[
c_{jr}a+c_{{j^*}r}\ovl a=2{\rm Re}(c_{jr}a)<0,
\]
and choose
$G\in \CN[x]$ which gives this value of $a$ (e.g., we can choose $G$ to be linear).
Then $\Phi\big(Q_*(x)F(x)\ovl{F}(-x)\big)<0$, as desired.
\end{proof}
\renewcommand{\qed}{}
\end{proof}

If $\mc{A}_{P_\circ}$ is the quantization with parameter $P_\circ$ then there is a conjugation $\rho_\circ$ on $\mc{A}_{P_\circ}$ given by the formulas
\[
\rho_\circ(v)=\lambda_\circ u,\qquad \rho_\circ(u)=(-1)^n \lambda_\circ^{-1}v,\qquad \rho_\circ(z)=-z,
\]
where $\lambda_\circ:=(-1)^{\frac{n-n_\circ}{2}}\lambda$. Therefore we can consider the cone of positive definite forms for $\mc{A}_{P_\circ}$ with respect to $\rho_\circ$.
\begin{Corollary}
The cone of positive definite forms on $\mc{A}_P$ with respect to $\rho$ coincides with the cone of positive definite forms on $\mc{A}_{P_\circ}$ with respect to $\rho_\circ$. Namely, a trace $T\colon \CN[x]\to \CN$ for $\mc{A}$ gives a positive definite form if and only if $T$ is a trace for $\mc{A}_{P_\circ}$ that gives a positive definite form on $\mc{A}_{P_\circ}$.
\end{Corollary}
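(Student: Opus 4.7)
The plan is to combine Propositions~\ref{PropPositivePhiIsZero} and~\ref{PropGeneralTrace} to reduce the classification of positive traces on $\mc{A}_P$ to traces on $\mc{A}_{P_\circ}$, and then use Lemma~\ref{posde}(2) to translate the two positivity conditions into integral inequalities that I can compare term by term. The main calculation will be a sign analysis for the ``extra'' factor $P_{\text{out}}:=P/P_\circ$, whose roots lie outside the strip $|\Re x|\le\tfrac12$.

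First I set up the bijection at the level of linear functionals. By Proposition~\ref{PropPositivePhiIsZero}, if $T$ gives a positive definite form on $\mc{A}_P$ then its ``outside-strip'' component satisfies $\Phi=0$, so Proposition~\ref{PropGeneralTrace} identifies $T$ with a trace for $P_\circ$. Conversely, if $T$ is any $g_t$-twisted trace for $\mc{A}_{P_\circ}$, then for every polynomial $S$ one can apply the $P_\circ$-trace identity to $\widetilde S(x):=S(x)P_{\text{out}}(x)$; since $\widetilde S(z\pm\tfrac12)P_\circ(z\pm\tfrac12)=S(z\pm\tfrac12)P(z\pm\tfrac12)$, the resulting identity is precisely the $P$-trace condition. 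Thus traces for $\mc{A}_{P_\circ}$ are exactly traces for $\mc{A}_P$ with $\Phi=0$.

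Next I compare the two positivity criteria. By Lemma~\ref{posde}(2), positivity of $T$ for $(\mc{A}_P,\rho)$ amounts to $T\bigl(R(z)\overline R(-z)\bigr)>0$ and $\lambda T\bigl(R(z-\tfrac12)\overline R(\tfrac12-z)P(z-\tfrac12)\bigr)>0$ for all nonzero $R$, while positivity for $(\mc{A}_{P_\circ},\rho_\circ)$ is the analogous pair with $\lambda$, $P$ replaced by $\lambda_\circ$, $P_\circ$. The first condition is identical. For the second, using the explicit form of $T$ from Theorem~\ref{ThrPositiveDefiniteFormsClosedStrip} as an integral against a weight $w$ plus discrete masses at roots $\alpha_i$ of $Q$ on $i\RN$, the discrete contributions vanish in both expressions since $P(\alpha_i-\tfrac12)=P_\circ(\alpha_i-\tfrac12)P_{\text{out}}(\alpha_i-\tfrac12)=0$ (because $Q(\alpha_i)=0$ and $Q(z)Q(z-1)$ divides $P_\circ(z-\tfrac12)$). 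Shifting the contour from $-\tfrac12+i\RN$ to $i\RN$ as in Proposition~\ref{PropFromPositiveTraceToFunctions}, the two conditions reduce to the requirements that $\lambda P(u)w(u+\tfrac12)$ respectively $\lambda_\circ P_\circ(u)w(u+\tfrac12)$ be nonnegative on $i\RN$.

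Finally I carry out the sign analysis. Since $i^nP$ is real on $i\RN$, the roots of $P$ outside the strip come in pairs $\{\beta,-\overline\beta\}$ with $|\Re\beta|>\tfrac12$, giving $(n-n_\circ)/2$ such pairs. Each pair contributes $(x-\beta)(x+\overline\beta)=x^2-2i\,\Im(\beta)\,x-|\beta|^2$, which at $x=iy$ equals $-(y-\Im\beta)^2-(\Re\beta)^2<0$. Hence $P_{\text{out}}(iy)$ has constant sign $(-1)^{(n-n_\circ)/2}$ on $i\RN$, so $(-1)^{(n-n_\circ)/2}P_{\text{out}}(u)>0$ there. Combined with $\lambda=(-1)^{(n-n_\circ)/2}\lambda_\circ$, this gives $\lambda P(u)=\lambda_\circ P_\circ(u)\cdot\bigl[(-1)^{(n-n_\circ)/2}P_{\text{out}}(u)\bigr]$, with the bracketed factor strictly positive on $i\RN$. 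Therefore the two positivity conditions are equivalent, completing the proof.

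The main obstacle is bookkeeping rather than any deep idea: one must verify that the discrete boundary contributions really cancel in the $P(z-\tfrac12)$ term (which follows from $Q|P_\circ$-divisibility), and carefully track how $\lambda$ and $\lambda_\circ$ relate so that the sign $(-1)^{(n-n_\circ)/2}$ produced by $P_{\text{out}}$ on $i\RN$ exactly absorbs the change from $\lambda$ to $\lambda_\circ$.
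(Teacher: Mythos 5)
Your proposal is correct and follows essentially the same route as the paper: use Proposition~\ref{PropPositivePhiIsZero} to force $\Phi=0$, reduce via Proposition~\ref{PropGeneralTrace} to a trace for $P_\circ$, and then match the two positivity criteria using that $(-1)^{(n-n_\circ)/2}P/P_\circ$ is positive on ${\rm i}\RN$ together with $\lambda=(-1)^{(n-n_\circ)/2}\lambda_\circ$. You additionally write out two points the paper leaves implicit (that traces for $P_\circ$ are traces for $P$, and the pairing argument proving the sign of $P/P_\circ$ on ${\rm i}\RN$), which is fine but not a different method.
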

\begin{proof}
We deduce from Proposition~\ref{PropPositivePhiIsZero} that each trace $T$ that gives a positive definite form should have $\Phi=0$. By Proposition~\ref{PropGeneralTrace}, in this case $T$ is a trace for the polynomial $P_\circ(x)$. So~there exists polynomial $Q_*$ such that for $R=R_1Q_*+R_0$, $\deg R_0<\deg Q_*$, and
\[
T(R(z))=\int_{{\rm i}\RN} Q_*(x)R_1(x)w(x) |{\rm d}x|+\phi(R_0).
\]

Using Proposition~\ref{PropPositiveFormNoPoles} and its proof, we deduce that $w$ has no poles and that $w(x)$ and $\lambda w\big(x+\frac{1}{2}\big)P(x)$ are nonnegative on ${\rm i}\RN$. Therefore
\[
T(R(z))=\int_{{\rm i}\RN} R(x) w(x) |{\rm d}x|+\psi(R_0),
\]
where $\psi$ is some linear functional. Using Proposition~\ref{PropIfPositiveThenNoDerivatives}, we deduce that this trace is positive if and only if $\psi(R_0)=\sum_j c_j R_0(z_j)$, where $c_j\geq 0$ and $z_j\in {\rm i}\mathbb R$.

Since $(-1)^{\frac{n-n_\circ}{2}}\frac{P(x)}{P_\circ(x)}$ is positive on ${\rm i}\RN$, we see that $\lambda w\big(x+\frac{1}{2}\big)P(x)$ is nonnegative on ${\rm i}\RN$ if and only if $\lambda_\circ w\big(x+\frac{1}{2}\big)P_\circ(x)$ is nonnegative on ${\rm i}\RN$. Using Theorem~\ref{ThrPositiveDefiniteFormsClosedStrip} we then deduce that $T$ is positive for $P(x)$ if and only if it is positive for $P_\circ(x)$.
\end{proof}
So we have proved the following theorem.

\begin{Theorem}
Let $\mc{A}=\mc A_P$ be a filtered quantization of $A$ with parameter $P$ equipped with a~con\-jugation $\rho$ such that $\rho^2=g_t$. Let~$\ell$ be the number of roots $\alpha$ of $P$ such that $|\Re\alpha|<\frac{1}{2}$ counted with multiplicities and $r$ be the number of distinct roots $\alpha$ of $P$ with $\Re\alpha=-\frac{1}{2}$. Then the cone $\mc C_+$ of $\rho$-equivariant positive definite traces on $\mc{A}$ is isomorphic to $\mc C_+^1\times \mc C_+^2$, where $\mc C_+^2=\RN_{\geq 0}^r$, and $\mc C_+^1$ is the cone of nonzero polynomials $G$ such that
\begin{enumerate}\itemsep=0pt
\item[$(1)$]
$G$ has degree less than $\ell$;
\item[$(2)$]
$G(0)=0$ if $t=1$;
\item[$(3)$]
$G(X)\geq 0$ when $X>0$;
\item[$(4)$]
$G(X)$ is either nonnegative or nonpositive when $X<0$ depending on whether
$\rho=\rho_+$ or~$\rho_-$.
\end{enumerate}
The conditions are the same as in Theorem~$\ref{ThrPositiveFormSmallerThanOne}$.
\end{Theorem}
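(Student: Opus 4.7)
The plan is to assemble three results proved earlier in the paper: the Corollary immediately preceding this theorem, Theorem~\ref{ThrPositiveDefiniteFormsClosedStrip}, and Theorem~\ref{ThrPositiveFormSmallerThanOne}. The Corollary already reduces the problem from $\mc A_P$ with conjugation $\rho$ to $\mc A_{P_\circ}$ with the shifted conjugation $\rho_\circ$: a trace on $\mc A_P$ defines a positive definite form if and only if it arises from a positive definite $\rho_\circ$-equivariant trace on $\mc A_{P_\circ}$. Hence it suffices to classify the latter.

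Next, I would write $P_\circ(x)=P_*(x)\,Q\big(x-\tfrac12\big)\,Q\big(x+\tfrac12\big)$ as in Section~3.4, with all roots of $P_*$ in the open strip $|\Re x|<\tfrac12$ and all roots of $Q$ on ${\rm i}\RN$. By Theorem~\ref{ThrPositiveDefiniteFormsClosedStrip} applied to $P_\circ$, positive definite traces are in bijection with pairs $(\widetilde T,(c_1,\dots,c_k))$, where $\widetilde T$ is a positive definite trace on $\mc A_{P_*}$ (with respect to the induced conjugation $\rho_*$), and $c_j\ge 0$ is attached to each of the $k$ distinct roots of $Q$. Because the map sending a root ${\rm i}\beta$ of $Q$ to the root $-\tfrac12+{\rm i}\beta$ of $Q(x+\tfrac12)$ is a bijection between distinct roots of $Q$ and distinct roots of $P$ with $\Re\alpha=-\tfrac12$, we obtain $k=r$ and $\mc C_+^2=\RN_{\ge 0}^r$ as claimed.

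For the remaining factor $\mc C_+^1$ of positive traces on $\mc A_{P_*}$, I would apply Theorem~\ref{ThrPositiveFormSmallerThanOne}. Since all roots of $P_*$ lie in the open strip, and $\deg P_*=\ell$ by the very definitions of $P_*$ and $\ell$ (both count roots of $P$ in the open strip with multiplicity), positive definite $\rho_*$-invariant traces correspond to nonzero polynomials $G$ of degree less than $\ell$, with $G(0)=0$ when $t=1$, and subject to the sign conditions (3)--(4) controlled by whether $\rho_*=\rho_+$ or $\rho_-$. This yields the stated description of $\mc C_+^1$, and assembling the two factors gives $\mc C_+\cong \mc C_+^1\times\mc C_+^2$.

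The one genuine technical point, and the main obstacle I foresee, is verifying that the sign parameter $\varepsilon_*:={\rm i}^\ell e^{\pi {\rm i} c}\lambda_*$ attached to the induced conjugation $\rho_*$ on $\mc A_{P_*}$ coincides with the original $\varepsilon={\rm i}^n e^{\pi {\rm i} c}\lambda$ of $\rho$ on $\mc A_P$, so that condition~(4) in the statement really is controlled by $\rho=\rho_\pm$ and not by a flipped sign. Using $\lambda_\circ=(-1)^{(n-n_\circ)/2}\lambda$ from the Corollary together with the analogous relation $\lambda_*=(-1)^{(n_\circ-\ell)/2}\lambda_\circ=(-1)^{\deg Q}\lambda_\circ$, and the identity $(-1)^k={\rm i}^{2k}$, one computes $\lambda_*={\rm i}^{n-\ell}\lambda$, whence $\varepsilon_*=\varepsilon$. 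Once this sign match is in hand, the polynomial cone description of $\mc C_+^1$ from Theorem~\ref{ThrPositiveFormSmallerThanOne} is exactly the one stated, and the theorem follows.
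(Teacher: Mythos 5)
Your proposal is correct and follows the paper's own route: the paper derives this theorem by exactly the same assembly of the preceding Corollary (reduction from $P$ to $P_\circ$ with $\rho_\circ$), Theorem~\ref{ThrPositiveDefiniteFormsClosedStrip} (splitting off the $\RN_{\ge 0}^r$ factor from the boundary roots), and Theorem~\ref{ThrPositiveFormSmallerThanOne} (the open-strip polynomial cone). Your explicit check that $\varepsilon_*=\varepsilon$ via $\lambda_*={\rm i}^{\,n-\ell}\lambda$ is a correct and slightly more careful rendering of the sign bookkeeping the paper handles implicitly through the positivity of $(-1)^{(n-n_\circ)/2}P/P_\circ$ on ${\rm i}\RN$.
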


\section[Explicit computation of the coefficients ak, bk of the 3-term recurrence for orthogonal polynomials and discrete Painleve systems]
{Explicit computation of the coefficients $\boldsymbol{a_k}$, $\boldsymbol{b_k}$ \\of the 3-term recurrence for orthogonal polynomials \\and discrete Painlev\'e systems}
\label{expcom}

As noted in Section~\ref{ortpol}, to compute
the short star-product associated to a trace $T$,
one needs to compute the coefficients $a_k$, $b_k$ of the 3-term recurrence for the corresponding orthogonal polynomials:
\[
p_{k+1}(x)=(x-b_k)p_k(x)-a_kp_{k-1}(x).
\]
Also recall \cite{Sz} that
$a_k=\frac{\nu_k}{\nu_{k-1}}$, where $\nu_k:=(P_k,P_k)$. Finally, recall that
$\nu_k=\frac{D_k}{D_{k-1}}$, where
$D_k$ is the Gram determinant for $1,x,\dots,x^{k-1}$, i.e.,
\[
D_k=\det_{0\le i,\,j\le k-1}\big(x^i,x^j\big)=\det_{0\le i,\,j\le k-1}(M_{i+j}),
\]
where $M_r$ is the $r$-th moment of the weight function $w(x)$, i.e.,
\[
M_r=\int_{{\rm i}\mathbb R}x^rw(x)|{\rm d}x|.
\]
In the even case $w(-x)=w(x)$ we have $b_k=0$, so
\[
p_{k+1}(x)=xp_k(x)-a_kp_{k-1}(x),
\]
and $p_k$ can be easily computed recursively from the sequence $a_k$.
If the polynomials $p_k$ are $q$-hypergeometric (i.e., obtained by a limiting procedure from Askey--Wilson polynomials), then $D_k$, $\nu_k$, $a_k$ admit explicit product formulas, but in general they do not admit any closed expression and do not enjoy any nice algebraic properties beyond the above.

In our case, the hypergeometric case only arises for $n=1$ or, in special
cases, $n=2$, but the fact that the weight function for general $n$ is
essentially a higher complexity version of the weight function for $n=1$
suggests that there is still a weaker algebraic structure in the
picture.

In fact, by~\cite{Magnus} it follows immediately from the fact that the
formal Stieltjes transform satisfies an inhomogeneous first-order
difference equation with rational coefficients that the corresponding orthogonal
polynomials $p_m(x)$ in the $x$-variable satisfy a family of difference equations
\[
\begin{pmatrix}
p_m\big(x+\frac{1}{2}\big)\\[1ex]
p_{m-1}\big(x+\frac{1}{2}\big)
\end{pmatrix}
= A_m(x)
\begin{pmatrix}
p_m\big(x-\tfrac{1}{2}\big)\\[1ex]
p_{m-1}\big(x-\tfrac{1}{2}\big)
\end{pmatrix}
\]
such that the matrix $A_m(x)$ has rational function coefficients of degree
bounded by a linear function of $n$ alone. (Here we work with the ``$x$''
version of the polynomials, to avoid unnecessary appearances of~${\rm i}$.)

Since the results of~\cite{Magnus} are stated in significantly more
generality than we need, we sketch how they apply in our special case. Let~$Y_0$ be the matrix
\[
Y_0(x) = \begin{pmatrix} 1 & F(x) \\ 0 & 1\end{pmatrix} ,
\]
where $F$ is the formal Stieltjes transform of the given trace. Moreover,
for each~$n$, let $\frac{q_n(x)}{p_n(x)}$ be the $n$-th Pad\'e approximant to $F(x)$
(with monic denominator), so that $\frac{q_n(x)}{p_n(x)}-F(x)=O\big(x^{-2n-1}\big)$. If we
define
\[
Y_n(x):= \begin{pmatrix} p_n(x) & -q_n(x)\\ p_{n-1}(x) & -q_{n-1}(x)\end{pmatrix}
Y_0(x)
\]
for $n>0$, then
\[
Y_n = \begin{pmatrix} x^n+o\big(x^n\big) & O\big(x^{-n-1}\big)\\
 x^{n-1}+o\big(x^{n-1}\big) & O\big(x^{-n}\big)\end{pmatrix} .
\]

\begin{Lemma}
 The denominator $p_n$ of the $n$-th Pad\'e approximant to $F(x)$ is the
 degree $n$ monic orthogonal polynomial for the associated linear
 functional $T$.
\end{Lemma}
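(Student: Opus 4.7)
The plan is to translate the Padé-approximation condition directly into the orthogonality relations defining $p_n$. Starting from the identity
\[
p_n(x) F(x) = T\!\left(\frac{p_n(x)}{x-z}\right) = T\!\left(\frac{p_n(x)-p_n(z)}{x-z}\right) + T\!\left(\frac{p_n(z)}{x-z}\right),
\]
I would observe that the first summand is a polynomial in $x$ of degree at most $n-1$ (since $(p_n(x)-p_n(z))/(x-z)$ is a polynomial in $x$ and $z$ of degree $n-1$ in $x$), while the second summand, expanded as a formal series in $x^{-1}$, equals
\[
\sum_{k\ge 0} x^{-k-1}\, T\!\left(z^{k}\, p_n(z)\right).
\]

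Next I would use the defining property of the Padé approximant: $q_n(x)/p_n(x)-F(x)=O(x^{-2n-1})$, which, after multiplying by $p_n(x)$ (monic of degree $n$), becomes $p_n(x) F(x) - q_n(x) = O(x^{-n-1})$. Comparing with the decomposition above, the polynomial part must be exactly $q_n(x)$, and the tail $\sum_{k\ge 0} x^{-k-1} T(z^k p_n(z))$ must vanish to order $x^{-n-1}$. This is exactly the condition
\[
T\!\left(z^{k}\, p_n(z)\right) = 0, \qquad k=0,1,\ldots,n-1,
\]
which says that $p_n$ is orthogonal to $1,z,\ldots,z^{n-1}$ with respect to the bilinear form $(f,g)\mapsto T(fg)$. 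Since $p_n$ is monic of degree $n$, it is the (unique, under the standing nondegeneracy assumption) monic orthogonal polynomial of degree $n$ for~$T$.

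The main issue is not a computational obstacle but a conceptual one: one must verify that the Padé approximant in this formal-series sense actually exists and is unique, i.e., that the monic denominator of degree $n$ is well defined. This reduces to the nonvanishing of the Hankel determinant $D_n=\det_{0\le i,j\le n-1}(M_{i+j})$ of moments of $w$, which is exactly the nondegeneracy of the inner product $(\cdot,\cdot)$ on the degree-$\le n-1$ part of $\mathbb C[z]$ already assumed in Section~\ref{ortpol}. Under this assumption both $p_n$ as orthogonal polynomial and $p_n$ as Padé denominator are uniquely determined, so the two coincide.
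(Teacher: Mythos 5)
Your proposal is correct and follows essentially the same route as the paper: the same decomposition of $p_n(x)F(x)$ into $T\big(\frac{p_n(x)-p_n(z)}{x-z}\big)$ plus the tail $\sum_{k\ge 0}x^{-k-1}T\big(z^kp_n(z)\big)$, followed by matching against the Pad\'e condition $p_n(x)F(x)-q_n(x)=O\big(x^{-n-1}\big)$ to extract $T\big(z^kp_n(z)\big)=0$ for $k<n$. Your closing remark on well-definedness of the monic Pad\'e denominator via the nonvanishing of the Hankel determinant is a useful addition that the paper leaves implicit under its standing nondegeneracy assumption.
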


\begin{proof}
 If $F=F_T$, then we find
 \[
 p_n(x) F(x) = T\bigg(\frac{p_n(x)}{x-z}\bigg) = T\bigg(\frac{p_n(x)-p_n(z)}{x-z}\bigg) +
 T\bigg(\frac{p_n(z)}{x-z}\bigg)
 \]
 (where we evaluate $T$ on functions of $z$, and $x$ is a parameter).
 The two terms correspond to the splitting of $p_n(x)F(x)$ into its
 polynomial part and its part vanishing at $x=\infty$, so that
 \[
 q_n(x) = T\bigg(\frac{p_n(x)-p_n(z)}{x-z}\bigg)
 \]
 and
 \[
 T\bigg(\frac{p_n(z)}{x-z}\bigg) = p_n(x)F(x)-q_n(x) = O\big(x^{-n-1}\big).
 \]
 Comparing coefficients of $x^{-m-1}$ for $0\le m<n$ implies that $T(z^m
 p_n(z))=0$ as required.
\end{proof}

{\samepage
\begin{Remark}\qquad
\begin{enumerate}\itemsep=0pt
\item[1.] It also follows that
 \[
 Y_n(x)_{12}=N_n x^{-n-1}+O\big(x^{-n-2}\big),\qquad Y_n(x)_{22}=N_{n-1} x^{-n}+O\big(x^{-n-1}\big).
 \]

\item[2.] Note that this is an algebraic/asymptotic version of the explicit solution of~\cite{BI} to the Riemann--Hilbert problem for orthogonal polynomials introduced in~\cite{FIK}.
\end{enumerate}
\end{Remark}

}

\begin{Lemma}
 We have $\det(Y_n)=N_{n-1}$ for all $n>0$.
\end{Lemma}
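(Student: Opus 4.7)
The plan is to exploit two facts: that $\det(Y_n)$ is visibly a polynomial in $x$, and that the asymptotic expansion of $Y_n$ at $x=\infty$ pins down its leading behavior. These two facts together force $\det(Y_n)$ to be the constant $N_{n-1}$.

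First I would compute $\det(Y_0)=1$ directly from the upper-triangular form, so that
\[
\det(Y_n)=\det\begin{pmatrix} p_n(x) & -q_n(x)\\ p_{n-1}(x) & -q_{n-1}(x)\end{pmatrix}
=p_{n-1}(x)q_n(x)-p_n(x)q_{n-1}(x).
\]
This expression is manifestly a polynomial in $x$, since $p_n,p_{n-1},q_n,q_{n-1}$ are polynomials (with $\deg q_k<\deg p_k=k$).

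Next I would read off the $x\to\infty$ asymptotics of $\det(Y_n)$ from the formula
\[
Y_n=\begin{pmatrix} x^n+o(x^n) & N_nx^{-n-1}+O(x^{-n-2})\\ x^{n-1}+o(x^{n-1}) & N_{n-1}x^{-n}+O(x^{-n-1})\end{pmatrix}
\]
recorded in the preceding remark. Multiplying out, the $(1,1)\cdot(2,2)$ product contributes $N_{n-1}+O(x^{-1})$, while the $(1,2)\cdot(2,1)$ product contributes $O(x^{-2})$. Hence
\[
\det(Y_n)=N_{n-1}+O(x^{-1})\qquad\text{as }x\to\infty.
\]

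Finally, combining the two observations: a polynomial in $x$ that tends to $N_{n-1}$ as $x\to\infty$ must be the constant $N_{n-1}$. I do not expect any real obstacle here; the only thing to be careful about is verifying that the $(1,2)\cdot(2,1)$ term really is of order $x^{-2}$ (so that no $x^{-1}$ contamination from the $N_n$-coefficient appears), which is immediate from the bounds $Y_n(x)_{12}=O(x^{-n-1})$ and $Y_n(x)_{21}=O(x^{n-1})$. Thus the identity $\det(Y_n)=N_{n-1}$ holds as an identity of polynomials.
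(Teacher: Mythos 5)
Your argument is correct and is essentially the paper's own proof: the paper likewise observes that $\det(Y_n)\in\CN[x]$ by construction and that the formal asymptotics force $\det(Y_n)=N_{n-1}+O\big(\frac{1}{x}\big)$, whence the polynomial is the constant $N_{n-1}$. Your write-up merely makes explicit the bookkeeping (the $(1,2)\cdot(2,1)$ term being $O\big(x^{-2}\big)$) that the paper leaves implicit.
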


\begin{proof}
 The definition of $Y_n$ implies that $\det(Y_n)\in \CN[x]$, while the
 (formal) asymptotic behavior implies that $\det(Y_n)=N_{n-1}+O\big(\frac{1}{x}\big)$.
\end{proof}

The inhomogeneous difference equation satisfied by $F$ trivially induces an
inhomogeneous difference equation satisfied by~$Y_0$:
\[
Y_0\big(x+\tfrac{1}{2}\big)
=
\begin{pmatrix}
 1 & t^{-1}\frac{L(x)}{P(x)}\\[1ex]
 0 & t^{-1}
\end{pmatrix}
Y_0\big(x-\tfrac{1}{2}\big)
\begin{pmatrix}
 1 & 0\\
 0 & t
\end{pmatrix} ,
\]
where
\[
L(x) = P(x)\big(F\big(x+\tfrac{1}{2}\big)-tF\big(x-\tfrac{1}{2}\big)\big)\in \CN[x].
\]
It follows immediately that $Y_n$ satisfies an analogous equation
\[
Y_n\big(x+\tfrac{1}{2}\big)=A_n(x)Y_n\big(x-\tfrac{1}{2}\big)
\begin{pmatrix}
 1 & 0\\
 0 & t
\end{pmatrix} ,
\]
where
\[
A_n(x)=
\begin{pmatrix} p_n\big(x+\frac{1}{2}\big) & -q_n\big(x+\frac{1}{2}\big)\\[1ex]
 p_{n-1}\big(x+\frac{1}{2}\big) & -q_{n-1}\big(x+\frac{1}{2}\big)\end{pmatrix}
\begin{pmatrix}
 1 & t^{-1}\frac{L(x)}{P(x)}\\[1ex]
 0 & t^{-1}
\end{pmatrix}
\begin{pmatrix} p_n\big(x-\tfrac{1}{2}\big) & -q_n\big(x-\tfrac{1}{2}\big)\\[1ex]
 p_{n-1}\big(x-\tfrac{1}{2}\big) & -q_{n-1}\big(x-\tfrac{1}{2}\big)\end{pmatrix}^{-1}.
\]
Since $\det(Y_n)=N_{n-1}$, $\det(Y_0)=1$, we can use the standard formula
for the inverse of a $2\times 2$ matrix to rewrite this as
\[
A_n(x)
=
N_{n-1}^{-1}
\begin{pmatrix} p_n\big(x+\frac{1}{2}\big) & -q_n\big(x+\frac{1}{2}\big)\\[1ex]
 p_{n-1}\big(x+\frac{1}{2}\big) & -q_{n-1}\big(x+\frac{1}{2}\big)\end{pmatrix}
\begin{pmatrix}
 1 & t^{-1}\frac{L(x)}{P(x)}\\[1ex]
 0 & t^{-1}
\end{pmatrix}
\begin{pmatrix} -q_{n-1}\big(x-\tfrac{1}{2}\big) & q_n\big(x-\tfrac{1}{2}\big)\\[1ex]
 -p_{n-1}\big(x-\tfrac{1}{2}\big) & p_n\big(x-\tfrac{1}{2}\big)\end{pmatrix} .
\]
It follows immediately that $P(x)A_n(x)$ has polynomial coefficients. We~can also compute the asymptotic behavior of $A_n(x)$ using the expression
\[
A_n(x)
=
Y_n\big(x+\tfrac{1}{2}\big)
\begin{pmatrix}
 1 & 0\\
 0 & t^{-1}
\end{pmatrix}
Y_n\big(x-\tfrac{1}{2}\big)^{-1}
\]
to conclude that
\begin{alignat*}{3}
& A_n(x)_{11} = 1+\tfrac{n}{x}+O\big(\tfrac{1}{x^2}\big),\qquad&&
A_n(x)_{12} = -\tfrac{(1-t^{-1})a_n}{x} + O\big(\tfrac{1}{x^2}\big),&\\
&A_n(x)_{21} = \tfrac{1-t^{-1}}{x}+O\big(\tfrac{1}{x^2}\big),\qquad&&
A_n(x)_{22} = t^{-1}(1-\tfrac{n}{x})+O\big(\tfrac{1}{x^2}\big),&
\end{alignat*}
which when $t=1$ refines to
\begin{alignat*}{3}
&A_n(x)_{11} = 1+\tfrac{n}{x}+O\big(\tfrac{1}{x^2}\big),\qquad&&
A_n(x)_{12} = -\tfrac{(2n+1)a_n}{x^2} + O\big(\tfrac{1}{x^3}\big),&\\
&A_n(x)_{21} = \tfrac{2n-1}{x^2}+O\big(\tfrac{1}{x^3}\big),\qquad&&
A_n(x)_{22} = 1-\tfrac{n}{x}+O\big(\tfrac{1}{x^2}\big).&
\end{alignat*}

Restricting to the first column of $Y_n(x)$ gives the following.

\begin{Proposition}
 The orthogonal polynomials satisfy the difference equation
 \[
 \begin{pmatrix}
 p_n\big(x+\frac{1}{2}\big)\\[1ex]
 p_{n-1}\big(x+\frac{1}{2}\big)
 \end{pmatrix}
 = A_n(x)
 \begin{pmatrix}
 p_n\big(x-\tfrac{1}{2}\big)\\[1ex]
 p_{n-1}\big(x-\tfrac{1}{2}\big)
 \end{pmatrix} .
 \]
\end{Proposition}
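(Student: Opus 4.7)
The statement is essentially an immediate corollary of the matrix difference equation
\[
Y_n\bigl(x+\tfrac{1}{2}\bigr)=A_n(x)\,Y_n\bigl(x-\tfrac{1}{2}\bigr)\begin{pmatrix} 1 & 0\\ 0 & t\end{pmatrix}
\]
already established in the excerpt, so the plan is just to read off the first column carefully. First I would recall from the definition
\[
Y_n(x)=\begin{pmatrix} p_n(x) & -q_n(x)\\ p_{n-1}(x) & -q_{n-1}(x)\end{pmatrix}Y_0(x),\qquad Y_0(x)=\begin{pmatrix} 1 & F(x)\\ 0 & 1\end{pmatrix},
\]
that the first column of $Y_n(x)$ equals $\bigl(p_n(x),\,p_{n-1}(x)\bigr)^{T}$, since the first column of $Y_0(x)$ is $(1,0)^{T}$.

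Next I would observe that the rightmost factor $\operatorname{diag}(1,t)$ in the matrix equation acts trivially on the first column: right multiplication by $\operatorname{diag}(1,t)$ rescales only the second column and leaves the first column untouched. Equating the first columns of the two sides of the matrix equation therefore gives
\[
\begin{pmatrix} p_n\bigl(x+\tfrac{1}{2}\bigr)\\[1ex] p_{n-1}\bigl(x+\tfrac{1}{2}\bigr)\end{pmatrix}=A_n(x)\begin{pmatrix} p_n\bigl(x-\tfrac{1}{2}\bigr)\\[1ex] p_{n-1}\bigl(x-\tfrac{1}{2}\bigr)\end{pmatrix},
\]
which is the claim.

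There is essentially no obstacle here, since all the real content (the construction of $Y_n$, the identification of $p_n$ as the denominator of the Padé approximant, the computation $\det(Y_n)=N_{n-1}$, the derivation of the inhomogeneous difference equation for $Y_0$ from $P(x)\bigl(F(x+\tfrac{1}{2})-tF(x-\tfrac{1}{2})\bigr)=L(x)\in\mathbb{C}[x]$, and the resulting formula for $A_n(x)$) has already been carried out in the preceding lemmas and discussion. The only point worth double-checking is that the polynomiality of $P(x)A_n(x)$ (established in the excerpt via the explicit formula for $A_n$) is consistent with the asymptotic expansions of $A_n(x)_{ij}$ listed there; but this is a sanity check rather than part of the proof, and the proposition itself follows directly from restricting the established matrix identity to its first column.
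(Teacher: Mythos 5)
Your proposal is correct and matches the paper's argument exactly: the paper introduces the proposition with the single line ``Restricting to the first column of $Y_n(x)$ gives the following,'' which is precisely your observation that the first column of $Y_n$ is $\bigl(p_n,\,p_{n-1}\bigr)^{T}$ and that right multiplication by $\operatorname{diag}(1,t)$ leaves the first column unchanged. Nothing is missing.
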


Note that it is not the mere existence of a difference equation with
rational coefficients that is significant (indeed, any pair of polynomials
satisfies such an equation!), rather it is the fact that (a) the poles are
bounded independently of $n$, and (b) so is the asymptotic behavior at
infinity.

If we consider (for $t\ne 1$) the family of matrices satisfying the above
conditions; that is, $PA_n$~is polynomial, $\det(A_n)=t^{-1}$, and
\begin{gather*}
A_n(x)_{11} = 1+\tfrac{n}{x}+O\big(\tfrac{1}{x^2}\big),\qquad
A_n(x)_{12} = O\big(\tfrac{1}{x}\big),\\
A_n(x)_{21} = \tfrac{1-t^{-1}}{x} + O\big(\tfrac{1}{x^2}\big),\qquad
A_n(x)_{22} = t^{-1}\big(1-\tfrac{n}{x}\big)+O\big(\tfrac{1}{x^2}\big),
\end{gather*}
we find that the family is classified by a {\em rational} moduli space. To
be precise, let $f(x):=$ \mbox{$\big(1-t^{-1}\big)^{-1}P(x)A_n(x)_{21}$}, and let $g(x)\in
\C[x]/(f(x))$ be the reduction of $P(x)A_n(x)_{11}$ modu\-lo~$f(x)$. Then
$f$ and $g$ both vary over affine spaces of dimension $\deg(q)-1$, and
generically determine $A_n$. Indeed, $A_n(x)_{21}$ is clearly determined
by $f$, and since $A_n(x)_{11}P(x)$ is specified by the asymptotics up to
an additive polynomial of degree $\deg(P)-2$, it is determined by $f$ and
$g$. For~generic $f$, $g$, this also determines $A_n(x)_{22}$, since the
determinant condition implies that for any root $\alpha$ of $f$,
$A_n(\alpha)_{11}A_n(\alpha)_{22}=t^{-1}$. Moreover, this constraint forces
$P(x)^2 \big(A_n(x)_{11}A_n(x)_{22}-t^{-1}\big)$ to be a multiple of $f(x)$, and
thus the unique value of~$A_n(x)_{12}$ compatible with the determinant
condition gives a matrix satisfying the desired conditions.

Moreover, given such a matrix, the three-term recurrence for orthogonal
polynomials tells us that the corresponding $A_{n+1}$ is the unique matrix
satisfying {\em its} asymptotic conditions and having the form
\[
A_{n+1}(x)=
\begin{pmatrix}
 x+\frac{1}{2}-b_n & -a_n\\
 1 & 0
\end{pmatrix}
A_n(x)
\begin{pmatrix}
 x-\frac{1}{2}-b_n & -a_n\\
 1 & 0
\end{pmatrix}^{-1}.
\]
It is straightforward to see that $a_n$, $b_n$ are determined by the
leading terms in the asymptotics of $A_n(x)_{12}$, and thus in particular
are rational functions of the parameters. We~thus find that the map from
the space of matrices $A_n$ to the space of matrices $A_{n+1}$ is a
rational map, and by considering the inverse process, is in fact
birational, corresponding to a sequence $F_n$ of~bira\-tional automorphisms
of $\mathbb A^{2\deg(P)-2}$. Note that the equation $A_0$, though not of the
standard form, is still enough to determine $A_1$, and thus gives
(rationally) a $\mathbb P^{\deg(P)-1}$ worth of initial conditions corresponding
to orthogonal polynomials. (There is a $\deg(P)$-dimensional space of
valid functions $F$, but rescaling $F$ merely rescales the trace, and thus
does not affect the orthogonal polynomials.)

\begin{Example}
As an example, consider the case $P(x)=x^2$, corresponding, e.g., to
\[
{\rm w}(y)=\frac{{\rm e}^{2\pi cy}}{\cosh^2\pi y},
\]
with $c\in (0,1)$. In this case,
$\deg(P)=2$, so we get a $2$-dimensional family of linear equations, and
thus a second-order nonlinear recurrence, with a 1-parameter family of
initial conditions corresponding to orthogonal polynomials. Since the
monic polynomial $f$ is linear, we may use its root as one parameter $f_n$,
and $g_n=A_n(f_n)_{11}$ as the other parameter. We~thus find that
\begin{gather}
A_n(x)=
\begin{pmatrix}
\big(1-\frac{f_n}{x}\big)\big(1+\frac{f_n+n}{x}\big)+\frac{f_n^2 g_n}{x^2}
&-a_n\frac{1-t^{-1}}{x}\big(1-\frac{f_{n+1}}{x}\big)
\\
\frac{1-t^{-1}}{x}\big(1-\frac{f_n}{x}\big)
&t^{-1}\big(\big(1-\frac{f_n}{x}\big)\big(1+\frac{f_n-n}{x}\big)+\frac{f_n^2}{g_n x^2}\big)
\end{pmatrix} ,
\end{gather}
where
\begin{gather}
a_n = \frac{t}{(t-1)^2}\frac{n^2g_n-f_n^2(g_n-1)^2}{g_n}
\end{gather}
and $f_n$, $g_n$ are determined from the recurrence
\begin{gather}
f_{n+1} =\frac{f_n(f_n(g_n-1)-ng_n)(f_n(g_n-1)-n)}{n^2g_n-f_n^2(g_n-1)^2},
\\
g_{n+1} = \frac{(f_n(g_n-1)-ng_n)^2}{t g_n (f_n(g_n-1)-n)^2}.
\end{gather}
The three-term recurrence for the orthogonal polynomials is then
\[
p_{n+1}(x)=(x-b_n)p_n(x)-a_np_{n-1}(x),
\]
where $a_n$ is as above and
\[
b_n = -f_{n+1}-\frac{(t+1)\big(n+\frac{1}{2}\big)}{t-1}.
\]
The initial condition is given by
\[
f_0 = b_0 + \frac{t+1}{2(t-1)},\qquad
g_0 = 1.
\]
(Note that the resulting $A_0$ is not actually correct, but this induces
the correct values for~$f_1$,~$g_1$, noting that the recurrence simplifies
for $n=0$ to $f_1=-f_0$, $g_1=1/tg_0$.) It follows from the general theory
of isomonodromy deformations~\cite{Ra} that this recurrence is a discrete
Painlev\'e equation (This will also be shown by direct computation in forthcoming work by N.~Witte.). We~also note that the recurrence satisfies a sort of
time-reversal symmetry: there is a natural isomorphism between the space of
equations for~$t$,~$n$ and the space for~$t^{-1}$,~$-n$, coming (up to a diagonal
change of basis) from the duality $A\mapsto \big(A^{\rm T}\big)^{-1}$, and this symmetry
preserves the recurrence. (This follows from the fact that if two
equations are related by the three-term recurrence, then so are their
duals, albeit in the other order.)
\end{Example}

\begin{Remark}
 The fact that $A_n(x)_{12}$ has a nice expression in terms of $a_n$ and
 $f_{n+1}$ follows more generally from the fact (via the three-term
 recurrence) that
 \[
 A_n(x)_{12} = -a_n A_{n+1}(x)_{21}.
 \]
 One similarly has
 \[
 A_n(x)_{22} = A_{n+1}(x)_{11} - \big(x+\tfrac{1}{2}-b_n\big) A_{n+1}(x)_{21},
 \]
 so that in general $f_{n+1}(x)\propto P(x)A_n(x)_{12}$ and $g_{n+1}(x) =
 P(x)A_n(x)_{22}\bmod f_{n+1}(x)$.
 In particular, applying this to $n=0$ tells us that the orthogonal polynomial case corresponds to the initial condition $f_1(x)\propto L(x)$, $g_1(x)=t^{-1}P(x)\bmod L(x)$.
\end{Remark}

The above construction fails for $t=1$, because the constraint on the
asymptotics of the off-diagonal coefficients of~$A_n$ is stricter in that
case:
\begin{gather*}
A_n(x)_{21}= \tfrac{2n-1}{x^2} + O\big(\tfrac{1}{x^3}\big),
\\
A_n(x)_{12} = O\big(\tfrac{1}{x^2}\big).
\end{gather*}
The moduli space is still rational, although the arguments is somewhat
subtler. We~can still parametrize it by $f_n(x):=P(x)A_n(x)_{12}$ and
$g_n(x):=P(x)A_n(x)_{11}\bmod f_n(x)$ as above, which is certainly enough
to determine $P(x)A_n(x)_{22}$ modulo $f_n(x)$. This still leaves two
degrees of~freedom in the diagonal coefficients, but
$\det(A_n(x))+O\big(\tfrac{1}{x^4}\big)$ depends only on the diagonal coefficients and is
linear in the remaining degrees of freedom, so we can solve for those.
Once again, having determined the coefficients on and below the diagonal,
the $21$ coefficient follows from the determinant, and can be seen to have
the correct poles and asymptotics. Note that now the dimension of the
moduli space is~$2\deg(q)-4$; that the dimension is even in both cases
follows from the existence of a canonical symplectic structure on such
moduli spaces, see~\cite{Ra}.

There is a similar reduction in the number of parameters when the trace is
even (forcing $t=(-1)^n$ and $P(x)=(-1)^n P(-x)$). The key observation in that case
is that
\[
Y_n(-x) = (-1)^n \begin{pmatrix} 1 & 0 \\ 0 & -1\end{pmatrix}Y_n(x)
 \begin{pmatrix} 1 & 0 \\ 0 & -1\end{pmatrix}
\]
implying that $A_n$ satisfies the symmetry
\[
A_n(-x)
=
\begin{pmatrix} 1 & 0 \\ 0 & -1\end{pmatrix}
A_n(x)^{-1}
\begin{pmatrix} 1 & 0 \\ 0 & -1\end{pmatrix} .
\]
Since $A_n$ is $2\times 2$ and has determinant $t^{-1}=(-1)^n$, this actually
imposes linear constraints on~the coefficients of $A_n$:
\begin{alignat*}{3}
&A_n(-x)_{11} = (-1)^n A_n(x)_{22},\qquad&&
A_n(-x)_{12} = (-1)^n A_n(x)_{12},&\\
&A_n(-x)_{21} = (-1)^n A_n(x)_{21},\qquad&&
A_n(-x)_{22} = (-1)^n A_n(x)_{11}.&
\end{alignat*}
In particular, $A_n(x)_{21}$ has only about half the degrees of freedom one
would otherwise expect, and for any root of that polynomial,
$A_n(\alpha)_{11}A_n(-\alpha)_{11}=1$, again halving the degrees of freedom (and
preserving rationality).

\begin{Example}
 Consider the case $P(x)=x^3+\beta^2x$ with $t=-1$ and even trace $\big($e.g., for
 $\beta=0$, the weight function ${\rm w}(y)=\frac{1}{\cosh^3 \pi y}\big)$. Then $A_n(x)_{21}$
 has the form $\frac{2(x^2-f_n)}{x^3+\beta^2x}$, and $A_n\big(\sqrt{f_n}\big)_{11}$ is of~norm~1, which can be parametrized in the form
 \[
 A_n\big(\sqrt{f_n}\big)_{11} = \frac{g_n+\sqrt{f_n}}{g_n-\sqrt{f_n}}.
 \]
 Applying this to both square roots gives two linear conditions on
 $A_n(x)_{11}$, which suffices to determine it, with $A_n(x)_{22}$
 following by symmetry and $A_n(x)_{12}$ from the remaining determinant
 conditions. We~thus obtain
 \[
 A_n(x)
 =
 \begin{pmatrix}
 \displaystyle 1 \!+\! \frac{n(x^2\!-\!f_n)}{x(x^2\!+\!\beta^2)}
 \!+\! \frac{2f_n(f_n\!+\!\beta^2)(g_n\!+\!x)}{(g_n^2\!-\!f_n)x(x^2\!+\!\beta^2)}
 &\displaystyle
 -2 a_n \frac{x^2-f_{n+1}}{x(x^2+\beta^2)}
 \\[2ex]
 \displaystyle 2 \frac{x^2-f_n}{x(x^2+\beta^2)}
 &\displaystyle
 \!-\!1 \!+\! \frac{n(x^2\!-\!f_n)}{x(x^2\!+\!\beta^2)} \!+\! \frac{2f_n(f_n\!+\!\beta^2)(g_n\!-\!x)}{(g_n^2\!-\!f_n)x(x^2\!+\!\beta^2)}
 \end{pmatrix} ,
 \]
 where
 \[
 a_n = -\frac{n^2}{4}+\frac{f_n(f_n+\beta^2)}{g_n^2-f_n}
 \]
 and $f_n$, $g_n$ are determined by the recurrence
 \begin{gather*}
 g_{n+1} = -\frac{n}{2} - \frac{2g_na_n}{ng_n-2f_n},
 \\
 f_{n+1} = -\frac{(ng_n-2f_n)^2 g_{n+1}^2}{4f_na_n},
 \end{gather*}
 with initial condition
 $f_1 = -\beta^2-\frac{1}{4}-a_1$, $g_1 = 0$.
\end{Example}

\begin{Remark}
 One can perform a similar calculation for the case $P(x)=x^4-e_1x^2+e_2$
 with even trace; again, one obtains a second-order nonlinear recurrence,
 but the result is significantly more complicated, even for $e_1=e_2=0$.
\end{Remark}

In each case, when the moduli space is $0$-dimensional, so that the
conditions uniquely determine the equation, we get an explicit formula for
$A_n$. This, of course, is precisely the case that the orthogonal
polynomial is classical.

\subsection*{Acknowledgements} The work of P.E.\ was partially supported by the NSF grant DMS-1502244. P.E.\ is grateful to Anton Kapustin for introducing him to the topic of this paper, and to Chris Beem, Mykola Dedushenko and Leonardo Rastelli for useful discussions. E.R.\ would like to thank Nicholas Witte for pointing out the reference~\cite{Magnus}.

\pdfbookmark[1]{References}{ref}
\LastPageEnding

\end{document}